\documentclass[groupedaddress,10pt,notitlepage,preprintnumbers]{revtex4-1}
\usepackage{amsmath}
\usepackage{amssymb}
\usepackage{amsthm}
\usepackage{mathtools}
\usepackage[dvipsnames]{xcolor}
\usepackage{csl}
\usepackage[toc,page]{appendix}

\bibliographystyle{plain}

\def\one{\boldsymbol{1}}
\newcommand\1{\one}
\def\!#1{\mathcal{#1}}
\def\*#1{\boldsymbol{\mathbf{#1}}}
\def\|#1{\textnormal{#1}}

\def\diff#1{\mathrm{d}#1}

\newcommand{\En}[1]{\mathsf{E}_{#1}}

\def\diff#1{\mathrm{d}#1}

\def\mx{X}

\def\mxmean{\overline{\mx}}
\def\mxmeanf{\mxmean^{\|f}}
\def\mxmeana{\mxmean^{\|a}}
\def\mxt{\*\mx^{\|t}}
\def\mxs#1{\*\mx^{(#1)}}
\def\my{\*Y}

\def\P{\*P}
\def\Pf{\P^{\|f}}
\def\Pa{\P^{\|a}}

\def\Phf{\hat{\P}{\vphantom{\P}}^{\|f}}
\def\Pha{\hat{\P}{\vphantom{\P}}^{\|a}}
\def\M{\*M}
\def\H{\*H}
\def\Q{\*Q}
\def\R{\*R}
\def\K{\*K}
\def\Kh{\hat{\K}}
\def\I{\*I}

\def\A{\*A}
\def\Af{\A^{\|f}}
\def\AfT{\A^{\|f,\mathsf{T}}}
\def\Aa{\A^{\|a}}
\def\AfT{\A^{\|f,\mathsf{T}}}

\def\x{x}
\def\xt{\x^{\|t}}
\def\xf{\x^{\|f}}
\def\xa{\x^{\|a}}
\def\p{p}
\def\pf{\p^{\|f}}
\def\pa{\p^{\|a}}

\def\Ndist{\!N}

\def\Ei{E}

\DeclareMathOperator{\Var}{Var}

\newtheorem{assumption}{Assumption}[]
\newtheorem{theorem}{Theorem}
\newtheorem{lemma}{Lemma}
\newtheorem{corollary}{Corollary}
\numberwithin{lemma}{section}
\numberwithin{corollary}{section}
\numberwithin{theorem}{section}
\numberwithin{assumption}{section}
\numberwithin{equation}{section}

\makeatletter
\renewcommand{\p@subsection}{}
\renewcommand{\p@subsubsection}{}
\makeatother

\theoremstyle{remark}
\newtheorem*{remark}{Remark}

\allowdisplaybreaks[1]

\begin{document}

\title{An Explicit Probabilistic Derivation of Inflation in a Scalar Ensemble Kalman Filter for Finite Step, Finite Ensemble Convergence}

\cslauthor{Andrey A Popov and Adrian Sandu}
\cslyear{20}
\cslrevision{1}
\cslemail{apopov@vt.edu, sandu@cs.vt.edu}
\csltitlepage

\begin{abstract}
This paper uses a probabilistic approach to analyze the converge of an ensemble Kalman filter solution to an exact Kalman filter solution in the simplest possible setting, the scalar case, as it allows us to build upon a rich literature of scalar probability distributions and non-elementary functions. To this end we introduce the bare-bones Scalar Pedagogical Ensemble Kalman Filter (SPEnKF). We show that in the asymptotic case of ensemble size, the expected value of both the analysis mean and variance estimate of the SPEnKF converges to that of the true Kalman filter, and that the variances of both tend towards zero, at each time moment. We also show that the ensemble converges in probability in the complementary case, when the ensemble is finite, and time is taken to infinity. Moreover, we show that in the finite-ensemble, finite-time case, variance inflation and mean correction can be leveraged to coerce the SPEnKF converge to its scalar Kalman filter counterpart. We then apply this framework to analyze perturbed observations and explain why perturbed observations ensemble Kalman filters underperform their deterministic counterparts.
\end{abstract}

\author{Andrey A Popov}
\email[]{apopov@vt.edu}
\affiliation{Computational Science Laboratory, Department of Computer Science, Virginia Tech}

\author{Adrian Sandu}
\email[]{sandu@cs.vt.edu}
\affiliation{Computational Science Laboratory, Department of Computer Science, Virginia Tech}
\date{\today}

\maketitle 
\thispagestyle{empty} 

\section{Introduction}

This paper is concerned with explicitly proving the convergence of a scalar ensemble Kalman filter in three cases: finite step and asymptotic ensemble, finite ensemble and asymptotic step, and finite step and finite ensemble with optimal inflation.

Bayesian data assimilation~\cite{asch2016data,law2015data,evensen2009data,reich2015probabilistic}, in our view, is concerned with transforming our \textit{a priori} uncertainty about the state of a (often chaotic) dynamical system~\cite{strogatz2014nonlinear}, and our uncertainty about observations of some truth. Representatives of our uncertainty in this context are taken to be the distributions of some random variables, and an application of Bayesian inference would involve applying Bayes' rule in an exact manner. In the vast majority of cases this problem is intractable.

By the principle of maximum entropy~\cite{jaynes2003probability}, if an there exists an ensemble of states with support from all of real space, and there is no other prior information, then if the mean and the covariance of the ensemble are known, then the best distribution that we can prescribe to the ensemble to describe our uncertainty is Gaussian. The ensemble Kalman filter~\cite{evensen1994sequential,le2009large,kwiatkowski2015convergence,mandel2011convergence,butala2008asymptotic} (EnKF) abuses this notion by assuming that the first two statistical moments of the ensemble are good descriptions of the exact moments, and that no other moments are known. In this way, the EnKF attempts to utilize the Kalman filter~\cite{kalman1960new} (KF) framework by substituting exact moments for their ensemble-derived statistical estimates. The advantage of the EnKF is in both the utilization of ensemble propagation~\cite{kalnay2003atmospheric}., and in better estimation of model forecast uncertainty. As the transformations defined by the EnKF do not exactly solve the problem of Bayesian inference, unlike those of the KF, the EnKF is wrought with heuristic attempts to correct it. One such heuristic is inflation~\cite{anderson1999monte}, which is thought to separate the anomalies in space in order for the ensemble covariance estimate to not degenerate prematurely.

All previous attempts at the convergence of the EnKF have looked at the asymptotic case of a large ensemble~\cite{le2009large,kwiatkowski2015convergence}. To date there has not been a full comprehensive analysis of the EnKF in the case of a finite ensemble, and more importantly, in the case of finite steps. We do not claim to provide such an analysis in the general case, but instead in a simplified scalar case. Additionally we also aim to explicitly derive inflation as not a heuristic, but as a natural consequence to the EnKF analysis non-linear action on the ensemble.

The paper is organized as follows. In section~\ref{sec:bg} we review the Kalman filter, and an over-sampled square-root ensemble Kalman filter.
We then introduce the scalar Kalman filter in section~\ref{sec:skf} and prove that it correctly models the uncertainly in a scalar linear dynamical system.
We follow this with an introduction of a toy scalar ensemble Kalman filter meant for pedagogical purposes called the Scalar Pedagogical Ensemble Kalman Filter (SPEnKF) in section~\ref{sec:spenkf}.  
We derive explicit probability distributions~\cite{mood1974introduction}, explicit formulations of their moments, and explicitly describe the asymptotic behavior in both ensemble size and steps of such a formulation.
We then show that the distributions of the SPEnKF's mean and variance estimates degenerate to that of the scalar Kalman filter in the asymptotic ensemble case (subsection~\ref{subsec:ppfe}). 
We derive sequential step-wise variance inflation, and mean correction factors, such that when these factors are applied, the expected values of the mean and variance estimates of the SPEnKF are exactly the mean and variance estimate of the scalar Kalman filter (subsection~\ref{subsec:opti}). 
Moreover, we show that in the step limit that the finite-ensemble SPEnKF converges in probability, regardless of model behavior, to that of the Kalman filter (subsection~\ref{subsec:cfe}).
We then use our framework to look as to why EnKF with perturbed observations can potentially behave in a suboptimal manner (subsection~\ref{subsec:po}). Next, we provide a trivial multivariate extension ot the SPEnKF, and show that a form of localization can indeed reduce the need for an oversampled ensemble in section~\ref{sec:multivariate}. We end with some final thoughts in section~\ref{sec:conclusions}.

\section{Background}%
\label{sec:bg}

Consider the case of capturing our uncertainty about an unknown dynamical system, 
\begin{equation}
    \mxt_{i+1} = \!M^{\|t}_{i}(\mxt_{i}),
\end{equation}
that evolves a true state, $\mxt_{i}$, from step $i$ to step $i+1$.
Now, consider us having access to an imperfect model of this dynamical system, 
\begin{equation}
\label{eq:model}
X_{i+1} = \!M_{i}(X_{i}) + \xi_{i}, \quad i=0,2,\dots
\end{equation}
where the distribution of the random variables $\mx_{i}$ and $\mx_{i+1}$ represent our uncertainty about the true state at the respective steps, and the distribution of the random variable $\xi_{i}$ represents our uncertainty in the model propagation, commonly referred to as model error.

\begin{assumption}[Initial state]
\label{ass:state}
We assume that we have uncertainty about the initial state, and that this uncertainty is described by a normal distribution $\mx_0 \sim \mathcal{N}(\mxmean_0,\P_0)$.
\end{assumption}

\begin{assumption}[Model]
\label{ass:model}
We make the following simplifying assumptions:
\begin{enumerate}
\item The model \eqref{eq:model} is  linear, $\!M_{i}\coloneqq\M_{i}$, and 
\item The model error is an unbiased normal random variable, $\*\xi_{i} \sim \Ndist(0,\Q_{i})$.
\end{enumerate}
\end{assumption}

Similarly, the observations, $\my_{i}$, at step $i$ correspond to a transformation of the state of our system into a (usually lower dimensional) observation space, through an observation operator, $\!H_i$. Thus an observation at time $i$ can be obtained from
\begin{equation}
\label{eq:observations}
    \my_{i} = \!H_i(\mx_{i}) + \eta_{i},
\end{equation}
where similarly, the distribution of the random variable, $\eta_{i}$, represents our uncertainty in the observations, is commonly referred to as observation error, and is typically used to account for inaccuracies in our measurements.

\begin{assumption}[Observations]
\label{ass:observations}
We make the following simplifying assumptions:
\begin{enumerate}
\item The observation operator \eqref{eq:observations} is  linear, $\!H_i\coloneqq\H_i$, and 
\item The observation error is an unbiased normal random variable, $\eta_i \sim \Ndist(0,\R_i)$. 
\end{enumerate}
\end{assumption}

Under the stated Assumptions \ref{ass:state}, \ref{ass:model}, \ref{ass:observations}, and a perfect application of Bayes' rule, our uncertainty in the state of our system remains Gaussian at all times. The \textit{a priori} (forecast) probability distribution of the uncertainty in the state at the current step $i$ is $\Ndist(\mxmeanf_{i},\Pf_{i})$, and the \textit{a posteriori} (analysis) probability distribution of the uncertainty in the state at step $i$ is $\Ndist(\mxmeana_{i},\Pa_{i})$.

The {\it forecast step} propagates the mean and covariance of our uncertainty in the state through the model \eqref{eq:model} from step $i$ to $i+1$:
\begin{equation}
\label{eq:kff}
\begin{split}
    \mxmeanf_{i+1} &= \M_{i} \mxmeana_{i},\\
    \Pf_{i+1} &= \M_{i} \Pa_{i} \M_{i}^\intercal + \Q_{i},
    \end{split}
 \end{equation}
 where $\mxmeana_0 \coloneqq \mxmean_0$, and $\Pa_0 \coloneqq \P_0$.
 
The corresponding previous {\it analysis step} applies the canonical Kalman filter equations \cite{kalman1960new,sarkka2013bayesian}
\begin{equation} \begin{split}
    \begin{split}
    \mxmeana_{i} &= \mxmeanf_{i} - \K_{i}(\H_{i}\mxmeanf_{i} - \my_{i}),\\
    \Pa_{i} &= (\I - \K_{i}\H_{i})\Pf_{i},\\
    \K_{i} &= \Pf_{i}\H_{i}^\intercal{(\H_{i}\Pf_{i}\H_{i}^\intercal + \R_{i})}^{-1},
    \end{split}\label{eq:kfa}
\end{split} \end{equation}
to obtain the best linear unbiased estimate of our uncertainty in the state of a linear dynamical system under Gaussian error assumptions. Equation~\eqref{eq:kfa} calculates the \text{a posteriori} uncertainty from the prior information and the information described by the observations (and our uncertainty in them). 


The ensemble Kalman filter takes a Monte Carlo approach to represent the prior and posterior probability densities. 

The ensemble Kalman filter, instead of representing our uncertainties by the first two empirical moments of a normal distribution, attempts to represent our uncertainty by the first two statistical moments of an ensemble of samples.
One replaces the analytical Gaussian density defined by the mean and covariance with an empirical distribution defined by an ensemble of $N$ states, $\En{\mx} = [\mxs{1},\mxs{2}, \dots, \mxs{N}]$. The ensemble mean $\mxmean$ will now represent the mean estimate of the Kalman filter, and the sample covariance estimate will similarly represent the covariance estimate of the Kalman filter. Recall that a sample covariance $(1/(N-1)) \A \A^{\intercal}$ is calculated using the matrix of sample anomalies, $\A = \En{\mx} - \mxmean\,\1_N^\intercal$, which is the matrix of the differences between the ensemble members and the ensemble mean.

In our formulation of the ensemble Kalman filter, we ignore model error ($\*Q_i = \*0$ in \eqref{eq:model}), we set the observation error covariance matrix and the observation operator to be constant in time ($\*R_i = \*R$, $\*H_i = \*H$), and look at an ideal oversampled square-root filter, in which the covariance matrix estimates come from a distribution with finite variances. In a square-root filter\cite{tippett2003ensemble} the covariance is transported through the analysis step using a transformation of the probability distribution. This transformation is typically done on the ensemble anomalies. In a perfect square-root filter, with a linear model, both the mean and the anomalies can be completely decoupled from each other, thus we will take the anomalies to not be derived from the ensemble mean at all, thus getting an additional degree of freedom, making our statistical covariance estimate  $(1/N) \A \A^{\intercal}$ instead.

We write the mean propagation by the EnKF in a similar manner to that of the Kalman filter:
\begin{equation} \begin{split}
    \mxmeanf_{i+1} &= \M_{i} \mxmeana_{i},\\
    \mxmeana_{i+1} &= \mxmeanf_{i+1} - \Kh_{i+1}(\H\mxmeanf_{i+1} - \my_{i+1}).
\end{split}\label{eq:meansqrttransport} 
\end{equation}

We propagate the ensemble anomalies in a way that follows the linear structure of the base Kalman filter, through an approximate transport of distributions. Let $\Phf$ denote the ensemble estimate of the \textit{a priori} covariance matrix , and $\Pha$ denote the ensemble estimate of the \textit{a posteriori} covariance matrix. The EnKF propagated anomalies and covariances at the corresponding previous step $i$ take the form:
\begin{equation} \begin{split}
    \Af_{i} &= \M_{i-1} \Aa_{i-1},\\
    \Aa_{i} &= {\left(\Pha_{i}\right)}^{\frac{1}{2}}{\left(\Phf_{i}\right)}^{-\frac{1}{2}}\Af_{i},\\
    \Kh_{i} &= \Phf_{i}\*H^\intercal{(\H\Phf_{i}\H^\intercal + \R)}^{-1},\\
    \Phf_{i} &= \frac{1}{N}(\Af_{i} \AfT_{i}),\\
    \Pha_{i} &= (\I - \Kh_{i}\H)\Phf_{i}.
\end{split}\label{eq:anomalysqrttransport}
\end{equation}
%
%
Note that, as the anomalies undergo a non-linear transformation through equation~\eqref{eq:anomalysqrttransport}, the distribution of the ensemble-estimated analysis covariance matrix is not the (scaled) Wishart distribution.

\section{The Scalar Kalman Filter (SKF)}%
\label{sec:skf}

We focus on the analysis of a scalar Kalman filter as this allows us to obtain analytical results that are almost intractable in the multivariate case.

\subsection{Definition of the SKF}

\begin{assumption}[Perfect scalar model]\label{ass:pm}
Our model state comes from $x \in \mathbb{R}$, the linear model is exact (no model error), can vary at each step, and is non-trivial, $\*M_i\coloneqq m_i\not=0$.
\end{assumption}
We can think of this assumption as requiring that that truth is also propagated through the scalar linear model,
\begin{align}
    \xt_{i+1} = m_i \xt_i.
\end{align}

\begin{assumption}[Direct observation]
We observe our only component directly, $\*H \coloneqq h = 1$.
\end{assumption}

\begin{assumption}[Constant observation error]\label{ass:constobs}
 The distribution of the observation error will be taken to be the same at each step, and each observation, $y_i$, is to be drawn from a normal distribution with mean $\xt_i$ and variance $\R\coloneqq r>0$.
\end{assumption}

The filtering process starts with the initial values $\xf_0 \coloneqq x_0$, and $\pf_0 \coloneqq p_0$.

Our model propagation step (equation~\eqref{eq:kff} in the multivariate case) is:
\begin{equation}
\label{eq:skfvp}
  \begin{split}
  x_{i+1}^\|f &= m_i \xa_i,\\
  \pf_{i+1} &= m_i^2 \pa_i.
  \end{split}
\end{equation}
The corresponding analysis step (equation~\eqref{eq:kfa} in the multivariate case)  has the form:
\begin{equation}
\begin{split}
    \label{eq:skfa}
  \xa_i &= \xf_i + k_i(y_i - \xf_i),\\
  \pa_i &= (1 - k_i)\pf_i,\\
  k_i &= \frac{\pf_i}{\pf_i + r}.
  \end{split}
\end{equation}

Next we will prove some fundamental things about linear propagation in the scalar case.

\subsection{Properties of the SKF}

We first wish to analyze the propagation of variance through the filter. We will now prove that the only non-linear operation that happens to the variance is in the computation of the Kalman gain. Note that propagation of variance through the model is trivially linear from~\eqref{eq:skfvp}. We now prove that the analysis variance is a linear scaling of the Kalman gain.
\begin{lemma}\label{lem:par}
  The analysis variance at the $i$-th step is $\pa_i = r k_i$.
\end{lemma}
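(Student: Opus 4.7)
The plan is to prove this by direct algebraic manipulation of the two defining equations from the analysis step~\eqref{eq:skfa}: namely $\pa_i = (1 - k_i)\pf_i$ and $k_i = \pf_i/(\pf_i + r)$. No induction or auxiliary lemmas appear to be needed; the identity is essentially a one-line consequence of the definition of the scalar Kalman gain.

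First I would compute $1 - k_i$ by placing the two terms over the common denominator $\pf_i + r$, which immediately yields $1 - k_i = r/(\pf_i + r)$. Substituting this into the expression $\pa_i = (1-k_i)\pf_i$ gives $\pa_i = r\pf_i/(\pf_i+r)$. The final step is to recognize the factor $\pf_i/(\pf_i+r)$ as exactly $k_i$, so that $\pa_i = r k_i$, as claimed.

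There is no substantive obstacle here; the only thing to be careful about is that the manipulation implicitly requires $\pf_i + r \neq 0$, which is guaranteed by Assumption~\ref{ass:constobs} ($r > 0$) together with the fact that $\pf_i \geq 0$ as a variance. This means the Kalman gain is well-defined at every step, and the algebraic rearrangement is valid without qualification. The result is useful going forward because it shows that, once $r$ is fixed, the analysis variance is completely determined by the (scalar) Kalman gain, reducing the study of variance propagation to the study of $k_i$.
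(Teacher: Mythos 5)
Your proof is correct and follows exactly the same route as the paper's: combine the two terms of $1-k_i$ over the common denominator $\pf_i+r$ to get $r/(\pf_i+r)$, multiply by $\pf_i$, and recognize the result as $r k_i$. The extra remark that $r>0$ keeps the denominator nonzero is a fine (if implicit in the paper) observation; nothing further is needed.
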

\begin{proof}
  We manipulate the variance analysis in equation~\eqref{eq:skfvp}:
\begin{equation*}
\begin{split}
    \pa_i &= (1-k_i)\pf_i = \left(1-\frac{\pf_i}{\pf_i + r}\right)\pf_i\\
            &= \left(\frac{\pf_i + r}{\pf_i + r} - \frac{\pf_i}{\pf_i + r}\right)\pf_i = \frac{r \pf_i}{\pf_i + r} = r k_i.
          \end{split}
\end{equation*}
\end{proof}
Moreover we can show that the Kalman gain at each step is a linear fractional function of the initial input variance.
\begin{lemma}\label{lem:kalmangain}
  The Kalman gain in the scalar Kalman filter at the $i$-th  step is 
\begin{equation*}
    k_i = \frac{\left(\prod_{j=0}^{i-1}m_j^2\right)p_0}{\left(\sum_{l=0}^{i}\prod_{j=0}^{l-1}m_j^2\right)p_0 + r}.
\end{equation*}
\end{lemma}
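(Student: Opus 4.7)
The plan is to prove this by induction on $i$, leveraging Lemma~\ref{lem:par} to chain the analysis variance back into the forecast variance at the next step. Write $P_i \coloneqq \prod_{j=0}^{i-1} m_j^2$ (with $P_0=1$, the empty product) and $S_i \coloneqq \sum_{l=0}^{i} P_l$, so the claim is the compact identity $k_i = P_i p_0 / (S_i p_0 + r)$.

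For the base case $i=0$, I would simply observe that $\pf_0 = p_0$ by assumption, so $k_0 = p_0/(p_0+r)$, which matches the formula since $P_0 = 1$ and $S_0 = P_0 = 1$. For the inductive step, assume the formula holds at step $i$. First, apply Lemma~\ref{lem:par} to get $\pa_i = r k_i = r P_i p_0/(S_i p_0 + r)$. Then use the forecast recurrence~\eqref{eq:skfvp} to obtain
\begin{equation*}
\pf_{i+1} = m_i^2 \pa_i = \frac{m_i^2 r P_i p_0}{S_i p_0 + r} = \frac{r P_{i+1} p_0}{S_i p_0 + r},
\end{equation*}
using $m_i^2 P_i = P_{i+1}$. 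Finally, substitute into the definition $k_{i+1} = \pf_{i+1}/(\pf_{i+1}+r)$; the common factor $(S_i p_0 + r)^{-1}$ cancels from numerator and denominator, yielding
\begin{equation*}
k_{i+1} = \frac{P_{i+1} p_0}{P_{i+1} p_0 + S_i p_0 + r} = \frac{P_{i+1} p_0}{S_{i+1} p_0 + r},
\end{equation*}
since $S_{i+1} = S_i + P_{i+1}$.

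There is no real obstacle here; the argument is pure bookkeeping once Lemma~\ref{lem:par} is in hand. The only thing to be careful about is the empty-product and empty-sum conventions at $i=0$ (so that the $l=0$ term contributes $P_0 = 1$ to the denominator's sum), which is exactly what makes the base case drop out correctly and what makes the telescoping $S_{i+1} = S_i + P_{i+1}$ close the induction cleanly.
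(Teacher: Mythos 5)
Your proof is correct and follows essentially the same route as the paper's: induction on the step index, using Lemma~\ref{lem:par} to write $\pa_i = r k_i$, propagating through $\pf_{i+1} = m_i^2 \pa_i$, and substituting into $k_{i+1} = \pf_{i+1}/(\pf_{i+1}+r)$ so the common denominator cancels. Your explicit handling of the empty-product/empty-sum conventions at $i=0$ and the telescoping identity $S_{i+1} = S_i + P_{i+1}$ is a slightly cleaner bookkeeping of the same computation.
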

\begin{proof}
  The Kalman gain at step $0$ is clearly $k_0=\frac{p_0}{p_0+r}$, now we manipulate in typical inductive fashion.
  Assume that,
\begin{equation*}
\begin{split}
    k_{q-1} &= \frac{\left(\prod_{j=0}^{q-2}m_j^2\right)p_0}{\left(\sum_{l=0}^{q-1}\prod_{j=0}^{l-1}m_j^2\right)p_0 + r},\\
    \pa_{q-1} &= r\frac{\left(\prod_{j=0}^{q-2}m_j^2\right)p_0}{\left(\sum_{l=0}^{q-1}\prod_{j=0}^{l-1}m_j^2\right)p_0 + r},
  \end{split}
\end{equation*}
  then, by~\eqref{eq:skfvp} and Lemma~\ref{lem:kalmangain},
\begin{equation*}
\begin{split}
    \pf_{q} &= m_{q-1}^2 r\frac{\left(\prod_{j=0}^{q-2}m_j^2\right)p_0}{\left(\sum_{l=0}^{q-1}\prod_{j=0}^{l-1}m_j^2\right)p_0 + r} = r\frac{\left(\prod_{j=0}^{q-1}m_j^2\right)p_0}{\left(\sum_{l=0}^{q-1}\prod_{j=0}^{l-1}m_j^2\right)p_0 + r}\\
    k_q &= {\left[r\frac{\left(\prod_{j=0}^{q-1}m_j^2\right)p_0}{\left(\sum_{l=0}^{q-1}\prod_{j=0}^{l-1}m_j^2\right)p_0 + r}\right]}     {\left[r\frac{\left(\prod_{j=0}^{q-1}m_j^2\right)p_0}{\left(\sum_{l=0}^{q-1}\prod_{j=0}^{l-1}m_j^2\right)p_0 + r}+r\right]}^{-1}\\
            &= {\left[\frac{\left(\prod_{j=0}^{q-1}m_j^2\right)p_0}{\left(\sum_{l=0}^{q-1}\prod_{j=0}^{l-1}m_j^2\right)p_0 + r}\right]} {\left[\frac{\left(\prod_{j=0}^{q-1}m_j^2\right)p_0+\left(\sum_{l=0}^{q-1}\prod_{j=0}^{l-1}m_j^2\right)p_0 + r}{\left(\sum_{l=0}^{q-1}\prod_{j=0}^{l-1}m_j^2\right)p_0 + r}\right]}^{-1}\\
            &= \frac{\left(\prod_{j=0}^{q-1}m_j^2\right)p_0}{\left(\sum_{l=0}^{q}\prod_{j=0}^{l-1}m_j^2\right)p_0 + r}.
  \end{split}
\end{equation*}
\end{proof}

We can extend this approach the analysis mean as well, meaning that the analysis mean at each step is a linear fractional function of the initial input variance and the initial input mean.
\begin{lemma}\label{lem:analysismean}
  The analysis at the $i$-th step is
\begin{equation}
    \xa_i = \frac{\left(\prod_{j=0}^{i-1}m_j\right)\left[\left(\sum_{l=0}^i y_l \prod_{j=0}^{l-1}m_j\right)p_0+  r x_0 \right]}{\left(\sum_{l=0}^{i}\prod_{j=0}^{l-1}m_j^2\right)p_0 + r}.
\end{equation}
\end{lemma}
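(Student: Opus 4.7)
The plan is to prove the formula by induction on $i$, mirroring the inductive structure that was used for the Kalman gain in Lemma~\ref{lem:kalmangain} and leveraging Lemmas~\ref{lem:par} and \ref{lem:kalmangain} to bypass any explicit bookkeeping of $\pf$ and $\pa$. The base case $i=0$ is a direct verification: with the usual convention that empty products and sums equal $1$ and $0$ respectively, the right-hand side collapses to $(y_0 p_0 + r x_0)/(p_0 + r)$, which coincides with $\xa_0 = (1-k_0)x_0 + k_0 y_0$.

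For the inductive step, I would assume the formula holds at step $q-1$ and compute $\xa_q$ via \eqref{eq:skfvp} and \eqref{eq:skfa}. The cleanest route is to first rewrite the analysis update in the convex-combination form
\begin{equation*}
\xa_q = (1-k_q)\xf_q + k_q y_q = \frac{r\,\xf_q + \pf_q\, y_q}{\pf_q + r},
\end{equation*}
so that the sole dependence on $\xa_{q-1}$ enters through $\xf_q = m_{q-1}\xa_{q-1}$, while $\pf_q = m_{q-1}^2 \pa_{q-1}$ is supplied by Lemmas~\ref{lem:par} and \ref{lem:kalmangain} in the closed form
\begin{equation*}
\pf_q = \frac{r\left(\prod_{j=0}^{q-1}m_j^2\right)p_0}{\left(\sum_{l=0}^{q-1}\prod_{j=0}^{l-1}m_j^2\right)p_0 + r}.
\end{equation*}

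Substituting the induction hypothesis for $\xa_{q-1}$ and the expression above for $\pf_q$ into the convex combination, the key algebraic move is to factor $\prod_{j=0}^{q-1}m_j$ out of the numerator by noting that $\prod_{j=0}^{q-1}m_j^2 = \bigl(\prod_{j=0}^{q-1}m_j\bigr)\cdot\bigl(\prod_{j=0}^{q-1}m_j\bigr)$, which lets the $y_q$ contribution $m_{q-1}^2 \pa_{q-1}\, y_q$ be absorbed into the sum $\sum_{l=0}^{q-1} y_l \prod_{j=0}^{l-1}m_j$ as the new $l=q$ term. The denominator $\pf_q + r$ simplifies by combining over a common denominator and recognizing the telescoping identity $\prod_{j=0}^{q-1}m_j^2 + \sum_{l=0}^{q-1}\prod_{j=0}^{l-1}m_j^2 = \sum_{l=0}^{q}\prod_{j=0}^{l-1}m_j^2$; the extraneous factor of $\bigl[\bigl(\sum_{l=0}^{q-1}\prod_{j=0}^{l-1}m_j^2\bigr)p_0 + r\bigr]$ introduced by putting $\pf_q$ over a common denominator cancels between numerator and denominator.

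The main obstacle I foresee is purely notational: keeping the four nested index conventions (product upper limits of $q-2$, $q-1$, $q$; sum upper limits of $q-1$ and $q$; the $y_l$ coefficient convention $\prod_{j=0}^{l-1}m_j$ at the boundary $l=q$) straight while absorbing the new $y_q$ term into the sum and simultaneously incrementing the upper index of the $m_j^2$ sum in the denominator. There is no conceptual difficulty beyond this; once the $\prod_{j=0}^{q-1}m_j$ factor is pulled out of both the propagated-mean piece and the new-observation piece, the formula at step $q$ reads off immediately.
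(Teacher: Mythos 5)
Your proposal is correct and follows essentially the same route as the paper: induction on the step index, with the base case $\xa_0 = (y_0 p_0 + r x_0)/(p_0+r)$ and the inductive step obtained by substituting the closed form of the gain (equivalently, of $\pf_q$) from Lemma~\ref{lem:kalmangain} into the convex combination $(1-k_q)m_{q-1}\xa_{q-1} + k_q y_q$ and absorbing the $y_q$ contribution as the new $l=q$ term of the sum. The only cosmetic difference is that you parametrize the update by $\pf_q$ rather than by $k_q$ directly, which changes nothing in the algebra.
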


\begin{proof}
  Clearly $\xa_0 = x_0 - \frac{p_0}{p_0 + r}(x_0 - y_0)=\frac{y_0 p_0 + r x_0}{p_0+r}$. We thus proceed by induction:
\begin{equation*}
\begin{split}
    \xa_{q-1} &= \frac{\left(\prod_{j=0}^{q-2}m_j\right)\left[\left(\sum_{l=0}^{q-1} y_l \prod_{j=0}^{l-1}m_j\right)p_0+  r, x_0\right]}{\left(\sum_{l=0}^{q-1}\prod_{j=0}^{l-1}m_j^2\right)p_0 + r}\\
    \xa_q &= x^\|f_q - k_q(\xf_q-y_q)\\
                &= (1-k_q)m_{q-1}\xa_{q-1}+k_q y_q\\
                &= \left[1-\frac{\left(\prod_{j=0}^{q-1}m_j^2\right)p_0}{\left(\sum_{l=0}^q\prod_{j=0}^{l-1}m_j^2\right)p_0 + r}\right] \frac{\left(\prod_{j=0}^{q-1}m_j\right)\left[\left(\sum_{l=0}^{q-1} y_l \prod_{j=0}^{l-1}m_j\right)p_0+  r x_0\right]}{\left(\sum_{l=0}^{q-1}\prod_{j=0}^{l-1}m_j^2\right)p_0 + r} + \frac{y_q\left(\prod_{j=0}^{q-1}m_j^2\right)p_0 }{\left(\sum_{l=0}^q\prod_{j=0}^{l-1}m_j^2\right)p_0 + r}\\
                &= \frac{\left[\left(\sum_{l=0}^q\prod_{j=0}^{l-1}m_j^2\right)p_0 + r\right]\left(\prod_{j=0}^{q-1}m_j\right)\left[\left(\sum_{l=0}^{q-1} y_l \prod_{j=0}^{l-1}m_j\right)p_0+  r x_0 + y_q\left(\prod_{j=0}^{q-1}m_j^2\right)p_0\right]}{\left[\left(\sum_{l=0}^q\prod_{j=0}^{l-1}m_j^2\right)p_0 + r\right]\left[\left(\sum_{l=0}^{q+1}\prod_{j=0}^{l-1}m_j^2\right)p_0 + r\right]}\\
    &= \frac{\left(\prod_{j=0}^{q-1}m_j\right)\left[\left(\sum_{l=0}^{q} y_l \prod_{j=0}^{l-1}m_j\right)p_0+  r x_0\right]}{\left(\sum_{l=0}^{q}\prod_{j=0}^{l-1}m_j^2\right)p_0 + r}.
    \end{split}
\end{equation*}
\end{proof}

We define the following three useful sequences:
\begin{equation}
\begin{split}
    M_i &= \prod_{j=0}^{i-1} m_j, \\
    S_i &= \sum_{l=0}^i \prod_{j=0}^{l-1} m_j^2 = \sum_{l=0}^i M_l^2, \\
    B_i &= \sum_{l=0}^i y_l \prod_{j=0}^{l-1} m_j = \sum_{l=0}^i M_l y_l.
    \end{split}
\end{equation}
Intuitively we can think of $M_i$ as the forward model propagator from the initial step $0$ to the current step $i$, $S_i$ as the cumulative model variance propagator to step $i$, and $B_i$ as the cumulative observation propagator to step $i$.

We thus write:
\begin{align}
    \pa_i &= \frac{M_i^2 p_0}{S_i p_0 + r}, \label{eq:skfpa}\\
    \xa_i &= \frac{M_i(B_i p_0 + r x_0)}{S_i p_0 + r}. \label{eq:skfxa}
\end{align}

Note again that we have assumed that we have a perfect non-trivial model (assumption~\ref{ass:pm}), therefore:
\begin{equation}
\begin{split}
    M_0 &= 1,\\
    S_0 &= 1,\\
    S_0 < S_1 < &\cdots < S_n,\\
    M_i^2 &\leq S_i.
\end{split}    
\end{equation}

\begin{remark}
Consider a dynamical system described by real valued initial value problem
\begin{align*}
  \*y' = \*f(\*y(t)),\quad t_0\leq t\leq t_f,\quad \*y(t_0)=\*y_0.
\end{align*}
Taking a forward Euler step in time,
\begin{align*}
  \*y_{i+1} = \*y_i + h_i \*f(\*y_i),
\end{align*}
a linearization of the model could then be written as
\begin{align*}
    \*M_i = \*I + h_i \*J(\*y(t_i)),
\end{align*}
where $\*J(\*y(t)) = \left.\frac{d \*f}{d \*y}\right\rvert_{\*y(t)}$ is  the Jacobian of $\*f$.
Bounded chaotic systems generally have the property that $\lVert \prod_{j=0}^\infty \*M_j \rVert \to \infty$, therefore the corresponding scalar case is of particular interest.
\end{remark}

\subsection{SKF convergence}

In the Bayesian approach to uncertainty quantification we seek to correctly describe our information about the truth.
In the language of the scalar Kalman filter, our information is described by the mean and variance of a normal distribution. Thus, we wish to both optimally describe the truth via the mean, and optimally describe our confidence in it, through the variance.

Thus, the ideal desired behavior for the scalar Kalman filter is for it to be an unbiased estimator of the truth, meaning that the expected value of the analysis tends towards the truth in the step limit,
\begin{align}
  \lim_{i\to\infty}\mathbb{E}[\xa_i - x^\|t_i] = 0,
\end{align}
and an unbiased estimator of the variance in that estimate, meaning that the variance in the mean tends towards our description of it,
\begin{align}
  \lim_{i\to\infty} \left[\Var\left(\xa_i - x^\|t_i\right)- \pa_i\right] = 0.
\end{align}

Using equation~\eqref{eq:skfpa} we look at the deviation of the analysis mean from the truth at some arbitrary step $i$:
\begin{equation}\label{eq:xamxt}
\begin{split}
  \xa_i - x_i^\|t &= \left[\frac{M_i(B_i p_0 + r x_0)}{S_i p_0 + r} - M_i \xt_0 \right] =  \frac{M_i \left[(B_i - \xt_0 S_i) p_0 + r (x_0 - \xt_0)\right]}{S_i p_0 + r}\\
                                       &= \frac{M_i r(x_0 - \xt_0)}{S_i p_0 + r} +p_0  \frac{M_i(B_i - \xt_0 S_i)}{S_i p_0 + r}.
\end{split}
\end{equation}
We therefore have to look at the asymptotic behavior of two terms. The first term is the ratio of model propagator to that of the variance propagator:
\begin{align}
  \frac{M_i}{S_i}.
\end{align}
The second term is the  propagated cumulative normalized observation deviation,
\begin{align}
   \frac{M_i(B_i - \xt_0 S_i)}{S_i}.\label{eq:cumulativenormalizedobservationdeviation}
\end{align}

\begin{lemma}
\label{lem:cummodelas}
  The cumulative model variance propagator grows faster than the model propagator:
 \begin{align}
    \lim_{i\to\infty} \frac{M_i}{S_i} = 0.
  \end{align}
\end{lemma}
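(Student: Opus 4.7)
The plan is to argue by case analysis on whether the increasing positive sequence $S_i$ converges or diverges. Since $m_j\neq 0$ by Assumption~\ref{ass:pm}, every term $M_l^2$ in the sum defining $S_i$ is strictly positive, so $S_i$ is strictly increasing. Moreover $S_0 = M_0^2 = 1$, so $S_i \geq 1$ for all $i$. Therefore $S_i$ either converges to some finite limit $L \geq 1$ or diverges to $+\infty$.

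First I would dispose of the convergent case. If $S_i \to L < \infty$, then $\sum_{l=0}^\infty M_l^2$ converges, and by the term test $M_l^2 \to 0$, hence $M_l \to 0$. Since $S_i$ stays bounded below by $1$, the ratio $M_i/S_i$ is sandwiched by $|M_i|$ in absolute value, which goes to zero.

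Next I would handle the divergent case, which is the interesting one. The key tool is the inequality $M_i^2 \leq S_i$ that is already recorded in the excerpt (because $M_i^2$ appears as one of the non-negative summands of $S_i$). Taking square roots gives $|M_i| \leq \sqrt{S_i}$, so
\begin{equation*}
\left\lvert \frac{M_i}{S_i}\right\rvert \leq \frac{\sqrt{S_i}}{S_i} = \frac{1}{\sqrt{S_i}}.
\end{equation*}
As $S_i \to \infty$, the right-hand side tends to zero, which concludes this case.

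Combining the two cases yields $M_i/S_i \to 0$ unconditionally. I don't anticipate a real obstacle here: the only subtlety is recognizing that $M_i$ itself may do essentially anything (grow, shrink, oscillate) depending on the choice of the $m_j$, but the square-versus-linear scaling encoded in $M_i^2 \leq S_i$ is strong enough to absorb all such behavior through the two-case split. No further assumption on the $m_j$ beyond Assumption~\ref{ass:pm} is needed.
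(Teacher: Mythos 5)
Your proof is correct, but it splits into cases differently from the paper. The paper performs a four-way case analysis on the asymptotic behavior of $\lvert M_i\rvert$ (tending to zero, to a positive constant, to infinity, or not converging), handling each separately and dealing with the non-convergent case by a somewhat informal squeeze between $\inf_{j\le i} M_j$ and $\sup_{j\le i} M_j$. You instead dichotomize on $S_i$, which, being increasing and bounded below by $1$, either converges or diverges to $+\infty$; the convergent case forces $M_i\to 0$ by the term test while $S_i\ge 1$ controls the denominator, and the divergent case is killed outright by the bound $\lvert M_i\rvert \le \sqrt{S_i}$ (i.e.\ the inequality of Lemma~\ref{lem:msqsi}), giving $\lvert M_i/S_i\rvert \le S_i^{-1/2}\to 0$. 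Your route is shorter and arguably tighter: it avoids any assumption about whether $\lvert M_i\rvert$ has a limit, so the awkward fourth case of the paper's proof disappears, and the only structural input is the trivial fact that $M_i^2$ is one of the nonnegative summands of $S_i$. What the paper's decomposition buys in exchange is a more explicit picture of \emph{how fast} the ratio decays in each regime of model growth (e.g.\ the $1/\lvert M_i\rvert$ rate for expanding models and the $1/(i-q)$ rate for bounded ones), which is in the spirit of the later convergence-rate discussions; your argument only yields the $S_i^{-1/2}$ envelope. Both are valid, and no assumption beyond Assumption~\ref{ass:pm} is used in either.
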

\begin{proof}
  Without loss of generality it suffices to look at $\lvert M_i \rvert$. We will examine the following exhaustive list of cases:
  \begin{enumerate}
  \item $\lim_{i\to\infty}\lvert M_i \rvert = 0$,\label{en:pp1}
  \item $\lim_{i\to\infty}\lvert M_i \rvert = C>0$,\label{en:pp2}
  \item $\lim_{i\to\infty}\lvert M_i \rvert = \infty$,\label{en:pp3}
  \item $\lim_{i\to\infty}\lvert M_i \rvert$ does not exist.\label{en:pp4}
  \end{enumerate}

  For Case~\ref{en:pp1} it suffices to see that $S_i \geq 1$, thus
  \begin{align*}
    \lim_{i\to\infty}\frac{\lvert M_i \rvert}{S_i} \leq \lim_{i\to\infty}\frac{\lvert M_i \rvert}{1} = 0.
  \end{align*}

  For Case~\ref{en:pp2}, there exists a step, $q$, and $\delta$, such that $C + \delta \geq \lvert M_i \rvert \geq C - \delta > 0, \forall i>q$, therefore,
  \begin{align*}
    \lim_{i\to\infty}\frac{\lvert M_i \rvert}{S_i} \leq \lim_{i\to\infty}\frac{C+\delta}{(i-q){(C-\delta)}^2} = 0.
  \end{align*}
  One will note that this case also proves the case where $\lvert M_i \rvert$ is bounded but does not converge.

  For Case~\ref{en:pp3}, observe that
  \begin{align*}
    \lim_{i\to\infty}\frac{\lvert M_i \rvert}{S_i} &= \lim_{i\to\infty} \frac{\lvert M_i \rvert}{\sum_{l=0}^i M_l^2}\\
                                                   &\leq \lim_{i\to\infty}\frac{\lvert M_i \rvert}{M_i^2}\\
                                                   &=\lim_{i\to\infty}\frac{1}{\lvert M_i \rvert} = 0.
  \end{align*}

  For case~\ref{en:pp4}, observe that,
  \begin{equation}
      \inf_{1\leq j\leq i} M_j \leq M_i \leq \sup_{1\leq j\leq i} M_j,
  \end{equation}
  and as the two bounds fall into one of our other three categories, the estimates collapse, and we regress to the former.
\end{proof}

\begin{lemma}\label{lem:msqsi}
  The variance propagator is at least as large as the square of the model propagator,
  \begin{align}
    0 \leq \frac{M_i^2}{S_i} \leq 1.
  \end{align}
\end{lemma}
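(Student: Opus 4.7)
The plan is to observe that this is essentially a one-line consequence of the definitions of $M_i$ and $S_i$ together with the positivity facts collected just before the remark.

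First, I would establish non-negativity. Since $M_i \in \mathbb{R}$, we have $M_i^2 \geq 0$. Since $S_i = \sum_{l=0}^{i} M_l^2$ is a sum of squares and $M_0 = 1$ gives $S_i \geq S_0 = 1 > 0$, the ratio $M_i^2/S_i$ is well-defined and non-negative, giving the lower bound.

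Next, for the upper bound, I would simply note that the numerator $M_i^2$ appears as one of the summands in the denominator $S_i = \sum_{l=0}^{i} M_l^2$. Because every other summand $M_l^2$ with $l \neq i$ is non-negative, we have $S_i \geq M_i^2$, which, dividing by the positive quantity $S_i$, yields $M_i^2 / S_i \leq 1$.

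There is no real obstacle here; the statement is essentially a restatement of the inequality $M_i^2 \leq S_i$ already listed among the structural properties of the sequences $M_i$ and $S_i$ under Assumption~\ref{ass:pm}. The only thing to be careful about is to invoke $M_0 = 1$ (rather than allowing $M_0 = 0$) to guarantee $S_i > 0$ so that division is legitimate, which is precisely what the non-triviality condition $m_i \neq 0$ in Assumption~\ref{ass:pm} ensures.
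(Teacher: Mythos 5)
Your proof is correct and matches the paper's approach: the paper simply states that the bound ``trivially follows from the definitions,'' and you have spelled out exactly that argument ($M_i^2$ is one of the non-negative summands of $S_i$, and $S_i \geq S_0 = 1 > 0$ makes the division legitimate). Nothing further is needed.
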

\begin{proof}
  This trivially follows from the definitions.
\end{proof}

\begin{lemma}\label{lem:podurv}
  The propagated cumulative normalized observation deviation is an unbiased random variable with variance converging to the observation variance times the ratio of the square of the model propagator to the variance propagator. In particular,
  \begin{align}
    \frac{M_i(B_i - \xt_0 S_i)}{S_i} &= \sum_{l=0}^i \varepsilon_{l,i},\\
    \mathbb{E}\left[\frac{M_i(B_i - \xt_0 S_i)}{S_i}\right] &= 0,\\
    \lim_{i\to\infty}\Var\left(\frac{M_i(B_i - \xt_0 S_i)}{S_i}\right) &= r \lim_{i\to\infty} \frac{M_i^2}{S_i},
  \end{align}
  where $\varepsilon_{l,i} \sim \!N\left(0, M_i^2 M_l^2 S_i^{-2} r\right)$.
\end{lemma}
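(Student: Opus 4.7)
The plan is to unpack $B_i - \xt_0 S_i$ into its definition and recognize that the observation errors $y_l - \xt_l$ are independent mean-zero Gaussians with variance $r$ (by Assumption~\ref{ass:constobs}, together with the standard independence of observations across steps). Since $\xt_l = M_l \xt_0$ under Assumption~\ref{ass:pm}, I would write
\begin{equation*}
  B_i - \xt_0 S_i \;=\; \sum_{l=0}^i M_l y_l - \xt_0 \sum_{l=0}^i M_l^2 \;=\; \sum_{l=0}^i M_l(y_l - M_l \xt_0) \;=\; \sum_{l=0}^i M_l(y_l - \xt_l).
\end{equation*}

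With this identity in hand, the first claim follows by defining
\begin{equation*}
    \varepsilon_{l,i} \;\coloneqq\; \frac{M_i M_l}{S_i}\,(y_l - \xt_l),
\end{equation*}
so that $\frac{M_i(B_i - \xt_0 S_i)}{S_i} = \sum_{l=0}^i \varepsilon_{l,i}$ is a linear combination of independent normals. Each $y_l - \xt_l \sim \Ndist(0,r)$, so $\varepsilon_{l,i} \sim \Ndist\!\left(0,\,M_i^2 M_l^2 S_i^{-2} r\right)$, matching the claim.

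The expectation claim is then immediate by linearity, since $\mathbb{E}[\varepsilon_{l,i}] = 0$ for every $l$. For the variance, independence across $l$ gives
\begin{equation*}
    \Var\!\left(\frac{M_i(B_i - \xt_0 S_i)}{S_i}\right) \;=\; \sum_{l=0}^i \frac{M_i^2 M_l^2}{S_i^2}\,r \;=\; \frac{M_i^2 r}{S_i^2}\sum_{l=0}^i M_l^2 \;=\; \frac{M_i^2 r}{S_i},
\end{equation*}
where the last equality uses the defining identity $S_i = \sum_{l=0}^i M_l^2$. Passing to the limit in $i$ and pulling out the constant $r$ yields the final assertion.

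I do not anticipate a serious obstacle: the whole lemma is essentially bookkeeping once the key algebraic identity $B_i - \xt_0 S_i = \sum_l M_l (y_l - \xt_l)$ is exposed. The only subtlety worth flagging is that the independence of the $y_l$ across steps is used implicitly — it is not stated explicitly in Assumption~\ref{ass:constobs}, so I would note up front that independence across observation times is assumed in the standard way. Lemma~\ref{lem:msqsi} guarantees the limit on the right-hand side lies in $[0, r]$, so the statement is well-posed even in the worst case.
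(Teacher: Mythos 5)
Your proof is correct and follows essentially the same route as the paper: the same identity $B_i - \xt_0 S_i = \sum_{l=0}^i M_l(y_l - \xt_l)$, the same definition of $\varepsilon_{l,i}$, and the same variance computation via independence and $S_i = \sum_l M_l^2$. Your remark that independence of the $y_l$ across steps is used implicitly is a fair observation, but it does not alter the argument, which matches the paper's.
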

\begin{proof}
   Every observation $y_l$ is a sample from the distribution $\!N(x_l^\|t,r)$. Define $y_{0,l} = M_l^{-1}y_l$, and observe that $\varepsilon_{l} = M_l(y_{0,l} - \xt_0) = y_l - \xt_l \sim \!N(0, r)$. Additionally define $\varepsilon_{l,i} = M_i M_l S_i^{-1}\varepsilon_{l} \sim \!N\left(0, M_i^2 M_l^2 S_i^{-2} r\right)$. Now we manipulate:
  \begin{align*}
    \frac{M_i(B_i - \xt_0 S_i)}{S_i} &= \frac{M_i}{S_i} \sum_{l=0}^i (M_l y_l - M_l^2 x_0^t) 
                                         = \frac{M_i}{S_i} \sum_{l=0}^i M_l^2(y_{0,l} - x_0^t) \\
                                       &= \frac{M_i}{S_i}\sum_{l=0}^i M_l\varepsilon_{l}
                                         = \sum_{l=0}^i \varepsilon_{l,i}.
  \end{align*}
  As for the expected value and variance, 
  \begin{align*}
    \mathbb{E}\left[\frac{M_i(B_i - \xt_0 S_i)}{S_i}\right] = \mathbb{E}\left[\sum_{l=0}^i \varepsilon_{l,i}\right] =  \sum_{l=0}^i \mathbb{E}\left[\varepsilon_{l,i}\right] = 0,\\
     \lim_{i\to\infty} \Var\left(\sum_{l=0}^i \varepsilon_{l,i} \right) = r \lim_{i\to\infty} \frac{M_i^2}{S_i^2} \sum_{l=0}^i M_l^2 = r \lim_{i\to\infty} \frac{M_i^2}{S_i^2} S_i = r \lim_{i\to\infty} \frac{M_i^2}{S_i},
  \end{align*}
  as required.
\end{proof}

\begin{corollary}\label{cor:weakisanalysisvar}
  In the step limit, the analysis uncertainty estimate, $\pa_i$, approaches the variance of the propagated cumulative normalized observation deviation,
  \begin{align}
    \lim_{i\to\infty} \pa_i = \lim_{i\to\infty} \Var\left(\frac{M_i(B_i - \xt_0 S_i)}{S_i}\right).
  \end{align}
\end{corollary}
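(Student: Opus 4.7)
The plan is to reduce the corollary to an elementary estimate and then combine it with the limits already established in the preceding lemmas. Combining Lemma~\ref{lem:par} with the closed form of the Kalman gain from Lemma~\ref{lem:kalmangain} yields the explicit expression
\[
\pa_i \;=\; r\,k_i \;=\; \frac{r\, M_i^2\, p_0}{S_i\, p_0 + r},
\]
while Lemma~\ref{lem:podurv} asserts that the right-hand side of the corollary equals $r\lim_{i\to\infty} M_i^2/S_i$. Hence it suffices to show that $\pa_i - r M_i^2/S_i \to 0$ as $i\to\infty$.

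I would do this by direct algebraic comparison. A short calculation gives
\[
\pa_i - \frac{r M_i^2}{S_i} \;=\; r M_i^2\left(\frac{p_0}{S_i p_0 + r} - \frac{1}{S_i}\right) \;=\; -\frac{r^2 M_i^2}{S_i(S_i p_0 + r)},
\]
and bounding $S_i p_0 + r \geq S_i p_0$ in the denominator produces
\[
\left\lvert \pa_i - \frac{r M_i^2}{S_i} \right\rvert \;\leq\; \frac{r^2}{p_0}\left(\frac{M_i}{S_i}\right)^2.
\]
By Lemma~\ref{lem:cummodelas} we have $M_i/S_i \to 0$, so the right-hand side vanishes in the limit, yielding $\lim_{i\to\infty}\pa_i = r\lim_{i\to\infty} M_i^2/S_i$. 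An appeal to Lemma~\ref{lem:podurv} then identifies this common limit with $\lim_{i\to\infty} \Var\bigl(M_i(B_i - \xt_0 S_i)/S_i\bigr)$, completing the proof.

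No real obstacle is anticipated, since the substantive analytic work has already been carried out in Lemmas~\ref{lem:cummodelas} and~\ref{lem:podurv}. The one point worth being careful about is that the estimate above is uniform in whether $S_i$ remains bounded or diverges: if $S_i$ converges to a finite limit, then $\sum_l M_l^2$ converges, so $M_i^2 \to 0$, and both $\pa_i$ and $r M_i^2/S_i$ trivially tend to zero, which is consistent with the bound and so requires no separate case analysis.
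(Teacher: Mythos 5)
Your proposal is correct and follows essentially the same route as the paper: both reduce the claim to $\lim_{i\to\infty}\pa_i = r\lim_{i\to\infty} M_i^2/S_i$ and then invoke Lemma~\ref{lem:podurv}. The only difference is that you explicitly justify the intermediate equality $\lim_{i\to\infty} r M_i^2 p_0/(S_i p_0 + r) = r\lim_{i\to\infty} M_i^2/S_i$ via the bound $r^2 (M_i/S_i)^2/p_0$ and Lemma~\ref{lem:cummodelas}, a step the paper's one-line proof asserts without comment.
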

\begin{proof}
  \begin{align*}
    \lim_{i\to\infty} \pa_i = r \lim_{i\to\infty} \frac{M_i^2 p_0}{S_i p_0 + r} = r \lim_{i\to\infty} \frac{M_i^2}{S_i} = \lim_{i\to\infty} \Var\left(\sum_{l=0}^i \varepsilon_{l,i} \right) = \lim_{i\to\infty} \Var\left(\frac{M_i(B_i - \xt_0 S_i)}{S_i}\right).
  \end{align*}
\end{proof}

Note that the analysis variance is not zero in the step limit for models that grow sufficiently fast. Take $M_i^2 = e^i$, then,
\begin{align*}
  \lim_{i\to\infty} \frac{M_i^2}{S_i} =   \lim_{i\to\infty} \frac{e^i}{\sum_{l=0}^i e^l} = \lim_{i\to\infty}\frac{e^i - e^{i+1}}{1-e^{i+1}} = \frac{e - 1}{e} > 0.
\end{align*}
As a consequence of this, we can therefore have non-zero uncertainty in the analysis in the step limit, even for perfect models!

\begin{theorem}
  In the step limit, the mean of the scalar Kalman filter approaches the truth, and our description of the variance tends towards the variance in the mean, meaning that, 
    \begin{align}
    \lim_{i\to\infty} \mathbb{E}[\xa_i - x_i^\|t] &= 0,\\
    \lim_{i\to\infty} \left[\Var(\xa_i - x_i^\|t) - \pa_i\right] &= 0.
  \end{align}
\end{theorem}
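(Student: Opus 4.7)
The plan is to exploit the exact decomposition~\eqref{eq:xamxt}, which writes $\xa_i - \xt_i$ as a deterministic term $M_i r (x_0 - \xt_0)/(S_i p_0 + r)$ plus a random term $p_0 M_i(B_i - \xt_0 S_i)/(S_i p_0 + r)$ in which all stochasticity is confined to $B_i$. Lemma~\ref{lem:podurv} already supplies the mean (zero) and exact variance ($rM_i^2/S_i$) of the normalized random term $M_i(B_i - \xt_0 S_i)/S_i$, so taking expectations and variances of~\eqref{eq:xamxt} reduces to elementary algebra. These will be combined with Lemmas~\ref{lem:cummodelas} and~\ref{lem:msqsi}, together with the closed form $\pa_i = r M_i^2 p_0/(S_i p_0 + r)$ from~\eqref{eq:skfpa}.

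For the unbiasedness claim, the random term contributes zero expectation, leaving $\mathbb{E}[\xa_i - \xt_i] = M_i r(x_0 - \xt_0)/(S_i p_0 + r)$. Using $S_i p_0 + r \geq S_i p_0$, the absolute value is bounded by $(r|x_0 - \xt_0|/p_0)\cdot |M_i|/S_i$, which tends to zero by Lemma~\ref{lem:cummodelas}. For the variance claim, only the random term contributes, yielding
\begin{equation*}
\Var(\xa_i - \xt_i) = \left(\frac{p_0 S_i}{S_i p_0 + r}\right)^{2} \frac{rM_i^2}{S_i}, \qquad \pa_i = \left(\frac{p_0 S_i}{S_i p_0 + r}\right) \frac{rM_i^2}{S_i}.
\end{equation*}
A one-line factoring then gives the identity $\Var(\xa_i - \xt_i) - \pa_i = -r^2 p_0 M_i^2/(S_i p_0 + r)^2$, so it remains to show this expression vanishes in the limit.

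The main obstacle is that $S_i$ is only guaranteed to be non-decreasing and need not diverge, for instance when $m_i \to 0$ fast enough for $\sum_l M_l^2$ to converge. A single uniform bound is therefore not immediate, and I would split into two cases. If $S_i \to \infty$, Lemma~\ref{lem:msqsi} gives $M_i^2 \leq S_i$, so $|\Var(\xa_i - \xt_i) - \pa_i| \leq r^2 p_0 S_i/(S_i p_0 + r)^2 \leq r^2/(S_i p_0)$, which tends to zero. If $S_i$ stays bounded, convergence of $\sum_l M_l^2$ forces $M_i^2 \to 0$ while the denominator stays bounded below by $r^2 > 0$, again giving the desired conclusion. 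The same dichotomy can, if preferred, also be used to streamline the mean argument.
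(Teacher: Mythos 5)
Your proposal is correct and follows essentially the same route as the paper: the decomposition~\eqref{eq:xamxt}, Lemma~\ref{lem:cummodelas} for the deterministic term, and Lemma~\ref{lem:podurv} for the random term. Your explicit identity $\Var(\xa_i - \xt_i) - \pa_i = -r^2 p_0 M_i^2/{(S_i p_0 + r)}^2$ together with the two-case limit argument is in fact somewhat more careful than the paper's one-line appeal to Corollary~\ref{cor:weakisanalysisvar}, which silently identifies the limit of $\Var\bigl(p_0 M_i(B_i - \xt_0 S_i)/(S_i p_0 + r)\bigr)$ with that of $\Var\bigl(M_i(B_i - \xt_0 S_i)/S_i\bigr)$ even though the two differ by the factor ${\bigl(p_0 S_i/(S_i p_0 + r)\bigr)}^2$, which tends to $1$ only when $S_i \to \infty$.
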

\begin{proof}
  Following equation~\eqref{eq:xamxt}, we manipulate:
  \begin{align*}
    \lim_{i\to\infty}\xa_i - x_i^\|t &= \lim_{i\to\infty} \left[\frac{M_i(B_i p_0 + r x_0)}{S_i p_0 + r} - M_i \xt_0 \right] = \lim_{i\to\infty} \frac{M_i \left[(B_i - \xt_0 S_i) p_0 + r (x_0 - \xt_0)\right]}{S_i p_0 + r}\\
                                       &= \left[\lim_{i\to\infty} \frac{M_i r(x_0 - \xt_0)}{S_i p_0 + r}\right] + \left[p_0 \lim_{i\to\infty} \frac{M_i(B_i - \xt_0 S_i)}{S_i p_0 + r}\right].
  \end{align*}
  The term $\lim_{i\to\infty} \frac{M_i r(x_0 - \xt_0)}{S_i p_0 + r}$ always converges to zero in the limit by Lemma~\ref{lem:cummodelas}. As for $p_0 \lim_{i\to\infty} \frac{M_i(B_i - \xt_0 S_i)}{S_i p_0 + r}$, its expected value is zero by Lemma~\ref{lem:podurv}, and its variance is $\lim_{i\to\infty} \pa_i$ by Corollary~\ref{cor:weakisanalysisvar}. 
\end{proof}

This shows that in the step limit, the scalar Kalman filter description of the moments converges to the moments describing the uncertainty.

\section{The Scalar Pedagogical Ensemble Kalman Filter (SPEnKF)}%
\label{sec:spenkf}

\subsection{Definition of the SPEnKF}

What is the fundamental characteristic that defines the ensemble Kalman filter?  We argue that the key component is the non-linear expression used to build the sampled covariance estimation, and seek to create the simplest possible version of the EnKF which still carries with it uncertain information from sampling the (co-)variance. 

\begin{assumption}[Identical initial sampling]
We assume that now our two inputs are $\hat{x}^\|a_0\coloneqq x_0$ (the same mean input as to that of the scalar Kalman filter), and $\*a^\|a_0 = \*a$ the vector of $N$ anomalies about the mean, such that ${[\*a]}_{1\leq i\leq N}\sim \!N(0,p_0)$ (anomalies are sampled exactly from a distribution with the variance used by the exact scalar Kalman filter).
\end{assumption}

\begin{lemma}
If ${[\*a]}_{1\leq i\leq N}\sim \!N(0,p_0)$, is a collection of $N$ samples from the distribution, then 
\begin{equation}
\label{eq:gamma}
\frac{1}{N}\,(\*a\cdot \*a) = \frac{1}{N}\,\sum_{i=1}^N a_i^2 \sim \Gamma\left(\frac{N}{2},\frac{N}{2 p_0}\right).
\end{equation}
\end{lemma}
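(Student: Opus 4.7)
The plan is to recognize this as a routine change-of-variables/scaling argument that reduces the statement to the standard fact that a sum of squared standard normals is chi-squared, together with the gamma/chi-squared identification and the scaling property of the gamma family.

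First, I would normalize to standard normals. Since each $a_i \sim \mathcal{N}(0,p_0)$ independently, the variables $z_i \coloneqq a_i/\sqrt{p_0}$ are i.i.d.\ $\mathcal{N}(0,1)$, so $z_i^2 \sim \chi^2_1$. Because the $z_i$ are independent, the additivity of the chi-squared distribution gives
\begin{equation*}
    \sum_{i=1}^N z_i^2 = \frac{1}{p_0}\sum_{i=1}^N a_i^2 \;\sim\; \chi^2_N.
\end{equation*}
Next, I would invoke the identification $\chi^2_N = \Gamma(N/2,\,1/2)$ in the shape-rate parameterization that is consistent with the statement (under that convention, the mean of $\Gamma(\alpha,\beta)$ is $\alpha/\beta$, so the stated mean $(N/2)/(N/(2p_0)) = p_0$ matches $\mathbb{E}[a_i^2]=p_0$, confirming the convention).

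Finally, I would apply the scaling property: if $W \sim \Gamma(\alpha,\beta)$ and $c>0$, then $cW \sim \Gamma(\alpha,\beta/c)$. Setting $W = \sum_i z_i^2 \sim \Gamma(N/2,1/2)$ and $c = p_0/N$ gives
\begin{equation*}
    \frac{1}{N}\sum_{i=1}^N a_i^2 \;=\; \frac{p_0}{N}\,W \;\sim\; \Gamma\!\left(\tfrac{N}{2},\,\tfrac{1/2}{p_0/N}\right) \;=\; \Gamma\!\left(\tfrac{N}{2},\,\tfrac{N}{2p_0}\right),
\end{equation*}
as claimed.

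No step here is really an obstacle; the only thing to be careful about is bookkeeping of the gamma parameterization (shape-rate vs.\ shape-scale), which I would pin down explicitly by matching the first moment as above before quoting the scaling identity. If desired, one could alternatively carry out the argument by an explicit Jacobian computation on the joint density, but the chi-squared reduction is cleaner and the route I would take in the writeup.
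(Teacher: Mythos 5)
Your proposal is correct and follows essentially the same route as the paper's proof: reduce to standard normals, identify the sum of squares as $\chi^2_N = \Gamma(N/2,1/2)$, and apply the gamma scaling property $cW \sim \Gamma(\alpha,\beta/c)$ with $c = p_0/N$. Your version is slightly more careful about pinning down the shape-rate parameterization via the first moment, which is a worthwhile addition but not a different argument.
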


\begin{proof}
Consider first the case of $\tilde{a}_i\sim~\!N(0,1)$, by the definition of the chi-square distribution, $\tilde{\*a}\cdot\tilde{\*a}\sim\chi^2_N=\Gamma\left(\frac{N}{2},\frac{1}{2}\right)$. Note also that if $x\sim\Gamma(\alpha,\beta)$, then $c x\sim\Gamma(\alpha,c^{-1}\beta)$, and that that $\sqrt{p_0}\,\tilde{a}_i=a_i\sim \!N(0,p_0)$. Therefore \eqref{eq:gamma} holds.
\end{proof}

In what follows we denote by ``hat'' the ensemble-estimated variances. For example, the initial sample variance for an over-sampled ensemble ($N > 1$) is 
\begin{equation}
\label{eq:hatp0}
\hat{p}_0=\frac{1}{N} (\*a \cdot \*a) \sim \Gamma\left(\alpha, \frac{\alpha}{p_0}\right), \quad \alpha := \frac{N}{2}.
\end{equation}

We will again assume a perfect model (assumption~\ref{ass:pm}) and a constant observation error variance (assumption~\ref{ass:constobs}).

We then construct the filter to as closely as possible approximate the behavior of the exact scalar filter. Propagating the mean one step:
\begin{equation}
\label{eq:spenkff}
\begin{split}
  \hat{x}^\|f_{i+1} &= m_i \hat{x}^\|a_i,\\
  \hat{x}^\|a_i &= \hat{x}^\|f_i + \hat{k}_i(y_i - \hat{x}^\|f_i),
  \end{split}
\end{equation}
is exactly the same as in the scalar case, with the exception of the Kalman gain, which is dependent on the anomalies. Propagating the anomalies forward one step therefore works as follows:
\begin{subequations}
\label{eq:spenkfa}
\begin{align}
  \*a^\|f_{i} &= m_{i-1} \*a^\|a_{i-1},\\
  \*a^\|a_i &= {\left(\hat{p}^\|a_i\right)}^{\frac{1}{2}}{\left(\hat{p}^\|f_i\right)}^{-\frac{1}{2}}\*a^\|f_i,\\
  \hat{k}_i &= \frac{\hat{p}^\|f_i}{\hat{p}^\|f_i+r},\\
  \hat{p}^\|f_i &= \frac{1}{N} (\*a^\|f_i \cdot \*a^\|f_i),\label{eq:spenkfpf}\\
  \hat{p}^\|a_i &= (1-\hat{k}_i)\hat{p}^\|f_i. \label{eq:spenkfpa}
\end{align}
\end{subequations}
Here the transformation ${\left(\hat{p}^\|a_i\right)}^{\frac{1}{2}}{\left(\hat{p}^\|f_i\right)}^{-\frac{1}{2}}$ would be the optimal transport in the case where it was assumed that ${\left[a^\|f_i\right]}_j \sim \!N(0, \hat{p}^\|f_i)$ and ${\left[a^\|a_i\right]}_j \sim \!N(0, \hat{p}^\|a_i)$. This is however not the case.

\subsection{Properties of SPEnKF}

\begin{lemma}
  The analysis variance, $\hat{p}^\|a_i$, computed by \eqref{eq:spenkfpa}, is exactly the sampled variance matrix of the analysis anomalies:
  \begin{align}
    \hat{p}^\|a_i &= \frac{1}{N} (\*a^\|a_i \cdot \*a^\|a_i),
  \end{align}
  and the forecast variance, $\hat{p}^\|f$, computed by \eqref{eq:spenkfpf}, at step $i+1$ is exactly the previous analysis variance propagated by the model:
  \begin{align}
    \hat{p}^\|f_{i+1} = m_i^2\hat{p}^\|a_i.
  \end{align}
\end{lemma}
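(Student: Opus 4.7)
The plan is to verify both identities by direct substitution into the SPEnKF update rules \eqref{eq:spenkfa}, exploiting the fact that every operation on the anomalies in \eqref{eq:spenkfa} is a scalar rescaling. Because scalar rescalings commute with the bilinear form $(\cdot,\cdot)/N$, each scaling on the anomaly vector corresponds to squaring the scalar and applying it to the sample variance.

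First I would handle claim (1). The analysis-anomaly update reads
\begin{equation*}
\*a^\|a_i = \left(\hat{p}^\|a_i\right)^{1/2}\left(\hat{p}^\|f_i\right)^{-1/2}\*a^\|f_i,
\end{equation*}
so that
\begin{equation*}
\tfrac{1}{N}\bigl(\*a^\|a_i\cdot \*a^\|a_i\bigr)
= \frac{\hat{p}^\|a_i}{\hat{p}^\|f_i}\cdot \tfrac{1}{N}\bigl(\*a^\|f_i\cdot \*a^\|f_i\bigr)
= \frac{\hat{p}^\|a_i}{\hat{p}^\|f_i}\cdot \hat{p}^\|f_i = \hat{p}^\|a_i,
\end{equation*}
where the second equality is the defining identity \eqref{eq:spenkfpf} and the last step only uses $\hat{p}^\|f_i \neq 0$, which holds almost surely for the over-sampled Gaussian initialization and is preserved by linear scaling. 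The only subtle point worth flagging explicitly is that $\hat{p}^\|a_i$ defined by \eqref{eq:spenkfpa} equals $\hat{k}_i r$ (Lemma~\ref{lem:par} applied to hatted quantities), so it is also almost surely nonzero, making the square-root transport well defined.

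Next I would prove claim (2), which is even more immediate. By the first line of \eqref{eq:spenkfa},
\begin{equation*}
\*a^\|f_{i+1} = m_i\, \*a^\|a_i,
\end{equation*}
so that
\begin{equation*}
\hat{p}^\|f_{i+1} = \tfrac{1}{N}\bigl(\*a^\|f_{i+1}\cdot \*a^\|f_{i+1}\bigr) = \frac{m_i^2}{N}\bigl(\*a^\|a_i\cdot \*a^\|a_i\bigr) = m_i^2\,\hat{p}^\|a_i,
\end{equation*}
using the definition \eqref{eq:spenkfpf} at index $i+1$ and the identity already established in claim~(1).

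There is essentially no hard part here; the argument is entirely algebraic, and its content is just that the square-root scaling and the model propagation both act as scalar multiplications on the anomaly vector, commuting with the sample-variance functional. The only thing to check carefully is that the transformation from $\*a^\|f_i$ to $\*a^\|a_i$ is well defined (i.e.\ $\hat{p}^\|f_i>0$ with probability one), which follows from the continuous distribution of the initial anomalies.
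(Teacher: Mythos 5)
Your proof is correct and follows essentially the same route as the paper: direct substitution of the anomaly updates into the sample-variance expression, using that scalar rescalings of the anomaly vector square on the variance. The added remark about $\hat{p}^\|f_i>0$ holding almost surely is a reasonable bit of extra care that the paper omits.
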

\begin{proof}
  By simple manipulation of \eqref{eq:spenkff} and \eqref{eq:spenkfa}:
  \begin{align*}
    \frac{1}{N} (\*a^\|a_i \cdot \*a^\|a_i) &= \left(\frac{(1-\hat{k}_i)\hat{p}^\|f_i}{\hat{p}^\|f_i}\right)\hat{p}^\|f_i = (1-\hat{k}_i)\hat{p}^\|f_i = \hat{p}^\|a_i,\\
    \hat{p}^\|f_{i+1} &= \frac{1}{N} (\*a^\|f_{i+1} \cdot \*a^\|f_{i+1}) = m_i^2 \frac{1}{N} (\*a^\|a_i \cdot \*a^\|a_i) = m_i^2 \hat{p}^\|a_i.
  \end{align*}
\end{proof}
This implies that the underlying anomalies are not important to the resulting distribution after several steps of the algorithm. All that matters to determining the resulting distribution, and thus the information of the variance at step $i$ is the distribution of the initial variance estimate at the onset.
The problem therefore reduces from attempting to grasp the distribution of the anomalies at a certain step---which almost certainly is not normal and whose members are not independent---to one of looking at a simple scalar.

\begin{lemma}
  The Kalman gain $\hat{k}_i$ of the SPEnKF is a random variable of the form $\frac{a_i \hat{p}_0}{c_i \hat{p}_0 + d_i}$, where $\hat{p}_0$ is distributed according to \eqref{eq:hatp0}.
\end{lemma}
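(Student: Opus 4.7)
The plan is to observe that the SPEnKF variance and Kalman gain recursions~\eqref{eq:spenkfa} are structurally identical to those of the deterministic SKF, \eqref{eq:skfvp}--\eqref{eq:skfa}, with the only substantive difference being that the deterministic initial variance $p_0$ is replaced by the random sample variance $\hat{p}_0$. The preceding lemma in this subsection has already established the scalar recursion $\hat{p}^\|f_{i+1} = m_i^2 \hat{p}^\|a_i$, and the definitions $\hat{k}_i = \hat{p}^\|f_i/(\hat{p}^\|f_i + r)$ and $\hat{p}^\|a_i = (1-\hat{k}_i)\hat{p}^\|f_i$ match the SKF formulas term for term. Consequently, conditional on any realization of $\hat{p}_0$, the SPEnKF gain $\hat{k}_i$ satisfies the same symbolic identity in $\hat{p}_0$ that the SKF gain $k_i$ satisfies in $p_0$.

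Concretely, I would prove by induction on $i$ the stronger statement
\begin{equation*}
\hat{k}_i \;=\; \frac{M_i^2\,\hat{p}_0}{S_i\,\hat{p}_0 + r},
\end{equation*}
which identifies $a_i = M_i^2$, $c_i = S_i$, and $d_i = r$, all deterministic functions of the model and the observation error variance. The base case $\hat{k}_0 = \hat{p}_0/(\hat{p}_0+r)$ is immediate from $\hat{p}^\|f_0 = \hat{p}_0$. For the inductive step, the same one-line manipulation used in Lemma~\ref{lem:par} gives $\hat{p}^\|a_{q-1} = r\hat{k}_{q-1}$, and the preceding lemma then yields $\hat{p}^\|f_q = m_{q-1}^2 \hat{p}^\|a_{q-1}$. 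Substituting this into $\hat{k}_q = \hat{p}^\|f_q/(\hat{p}^\|f_q + r)$ and simplifying---the algebra is a verbatim copy of the inductive step of Lemma~\ref{lem:kalmangain}---produces the advertised linear fractional form with the claimed coefficients.

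There is essentially no substantive obstacle beyond bookkeeping, since the computation is symbolically identical to Lemma~\ref{lem:kalmangain}. The conceptual content of the lemma lies in the observation that all randomness in the SPEnKF variance propagation is inherited from the single scalar $\hat{p}_0$: once it is sampled, every subsequent forecast variance, analysis variance, and Kalman gain is a deterministic rational function of $\hat{p}_0$. This reduction from the joint distribution of high-dimensional anomaly vectors to a one-dimensional distribution is what makes the subsequent analytical study of the SPEnKF distributions tractable.
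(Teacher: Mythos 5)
Your proposal is correct and matches the paper's argument: the paper likewise notes that the SPEnKF variance evolution is identical in form to that of the SKF and invokes Lemma~\ref{lem:kalmangain} to read off $a_i = M_i^2$, $c_i = S_i$, $d_i = r$ with $\hat{p}_0$ in place of $p_0$. Your explicit restatement of the induction is just a more detailed rendering of the same reduction.
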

\begin{proof}
  As the evolution of the variance in the SPEnKF is identical to that of the exact scalar Kalman filter, by Lemma~\ref{lem:kalmangain},
  \begin{align*}
    \hat{k}_i = \frac{a_i \hat{p}_0}{c_i \hat{p} + d_i},\\
    a_i = M_i^2,\quad c_i = S_i,\quad d_i = r,
  \end{align*}
  as required.
\end{proof}

\begin{lemma}
  The analysis mean $\hat{x}^\|a_i$ of the SPEnKF is a random variable of the form $\frac{a_i \hat{p}_0 + b_i}{c_i \hat{p}_0 + d_i}$,where $\hat{p}_0$ is distributed according to \eqref{eq:hatp0}.
\end{lemma}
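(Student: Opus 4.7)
The plan is to carry the inductive argument of Lemma~\ref{lem:analysismean} through verbatim, with $p_0$ replaced by the random variable $\hat{p}_0$. Because the preceding lemma already shows that the SPEnKF gain takes the form $\hat{k}_i = M_i^2 \hat{p}_0 / (S_i \hat{p}_0 + r)$, i.e.\ the SKF gain with $p_0 \to \hat{p}_0$, and because the mean update \eqref{eq:spenkff} is structurally identical to the SKF mean update, I expect to obtain the explicit formula
\[ \hat{x}^\|a_i \;=\; \frac{M_i\bigl(B_i\, \hat{p}_0 + r\, x_0\bigr)}{S_i\, \hat{p}_0 + r}, \]
which is a random variable of the required form with $a_i = M_i B_i$, $b_i = M_i r x_0$, $c_i = S_i$, and $d_i = r$.

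The base case $i=0$ is immediate: $\hat{x}^\|a_0 = x_0 - \hat{k}_0(x_0 - y_0) = (y_0 \hat{p}_0 + r x_0)/(\hat{p}_0 + r)$, which matches the claim since $M_0 = S_0 = 1$ and $B_0 = y_0$. For the inductive step I would write $\hat{x}^\|a_q = (1-\hat{k}_q)\,m_{q-1}\,\hat{x}^\|a_{q-1} + \hat{k}_q y_q$ and exploit the key algebraic cancellation
\[ 1 - \hat{k}_q \;=\; \frac{S_q \hat{p}_0 + r - M_q^2 \hat{p}_0}{S_q \hat{p}_0 + r} \;=\; \frac{S_{q-1} \hat{p}_0 + r}{S_q \hat{p}_0 + r}, \]
which is just the identity $S_q - M_q^2 = S_{q-1}$ in disguise. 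This is precisely what prevents $(1-\hat{k}_q)\,\hat{x}^\|a_{q-1}$ from producing a quadratic in $\hat{p}_0$ in the denominator: the numerator of $1-\hat{k}_q$ cancels exactly against the denominator of $\hat{x}^\|a_{q-1}$ supplied by the induction hypothesis. After this cancellation, combining with $\hat{k}_q y_q$ over the common denominator $S_q \hat{p}_0 + r$ and using $B_q = B_{q-1} + M_q y_q$ collapses the numerator to $M_q(B_q \hat{p}_0 + r x_0)$, closing the induction.

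The main obstacle is conceptual rather than computational: one has to recognize that, although both $\hat{k}_q$ and $\hat{x}^\|a_{q-1}$ are rational functions of $\hat{p}_0$ and their product looks a priori like a ratio of quadratics, the linear-fractional form is preserved because the denominators at successive steps are related by the telescoping of $S$. Once this single observation is made, the proof is purely symbolic: everything in the SKF derivation of Lemma~\ref{lem:analysismean} goes through under the substitution $p_0 \mapsto \hat{p}_0$, and the coefficients $(a_i, b_i, c_i, d_i) = (M_i B_i,\, M_i r x_0,\, S_i,\, r)$ are read off from the final expression.
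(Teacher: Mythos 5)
Your proposal is correct and follows essentially the same route as the paper, which simply invokes Lemma~\ref{lem:analysismean} under the substitution $p_0 \mapsto \hat{p}_0$ to read off $(a_i,b_i,c_i,d_i)=(M_iB_i,\,M_i r x_0,\,S_i,\,r)$. Your explicit telescoping identity $S_q - M_q^2 = S_{q-1}$ is a cleaner way to carry out the induction than the paper's own algebra, but the substance is identical.
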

\begin{proof}
  As the analysis mean evolves with the same exact principles as in the canonical exact Kalman filter, Lemma~\ref{lem:analysismean} applies, and as such,
  \begin{gather*}
    \hat{x}^\|a_i = \frac{a_i \hat{p}_0 +  b_i}{c_i\hat{p}_0 + d_i},\\
    a_i = M_i B_i,\quad b_i = M_i x_0 r,\quad c_i = S_i,\quad d_i= r.
  \end{gather*}
\end{proof}

This algorithm is obviously very similar, but not equivalent to the canonical scalar Kalman filter.

\subsection{Analysis of the perturbed problem}

As we have proven that the scalar Kalman filter (with a perfect non-trivial model) moments converge to the actual moments inherent in the estimates in the step limit, it suffices for us to prove that SPEnKF converges to the scalar Kalman filter in some certain asymptotic, and finite cases. We will accomplish this by showing degeneracy of the resulting distribution of the differences between the first two moment estimates of the SPEnKF and the SKF.

\begin{assumption}[Perturbed problem]
Let the SPEnKF take the inexact perturbed inputs $\tilde{p}_0$ (resulting from some perturbed anomalies), and $\tilde{x}_0$, whilst the corresponding exact scalar Kalman filter takes the unperturbed inputs $p_0$ and $x_0$.
\end{assumption} 

We now look at the discrepancy between the SPEnKF and the exact scalar KF. The discrepancy in the analysis variance, and analysis mean, at the $i$th step are random variables such that:
\begin{equation}\begin{split}
  \Delta p_i &= \hat{\tilde{p}}^a_i - p^a_i\\
             &= \frac{M_i^2 r\hat{\tilde{p}}_0}{S_i \hat{\tilde{p}}_0 + r} - \frac{M_i^2 r p_0}{S_i p_0 + r}\\
             &= \frac{M_i^2 r^2 \hat{\tilde{p}}_0 - M_i^2 r^2 p_0}{S_i(S_i p_0 + r) \hat{\tilde{p}}_0 + r(S_i p_0 + r)},\\
  \Delta x_i &= \hat{\tilde{x}}^a_i - x^a_i\\
             &= \frac{M_i B_i \hat{\tilde{p}}_0 + M_i r \tilde{x}_0}{S_i \hat{\tilde{p}}_0 + r} - \frac{M_i B_i p_0 + M_i r x_0}{S_i p_0 + r}\\
             &= \frac{M_i r(B_i - S_i x_0)\hat{\tilde{p}}_0 + M_i r(S_i p_0 \tilde{x}_0 + r \tilde{x}_0 - B_i p_0 - r x_0)}{S_i(S_i p_0 + r) \hat{\tilde{p}}_0 + r(S_i p_0 + r)}.
  \end{split}
    \end{equation}
    
Denote the generalized exponential integral function by:
\[
\Ei_n(z) := \int_1^\infty \frac{e^{- z t}}{t^n}\diff{t}.
\] 
By Lemma~\ref{lem:expandvar} we have that:
\begin{align}
  \mathbb{E}[\Delta p_i] &= \frac{\alpha M_i^2 r^2 e^{\frac{\alpha r}{S_i \tilde{p}_0}}}{S_i (S_i p_0 + r)}\left[-\frac{p_0}{\tilde{p}_0}\Ei_{\alpha}\left(\frac{\alpha r}{S_i \tilde{p}_0}\right) + \Ei_{\alpha+1}\left(\frac{\alpha r}{S_i \tilde{p}_0}\right)\right],\label{eq:pppm}\\
  \mathbb{E}[\Delta x_i] &= \frac{\alpha M_i r e^{\frac{\alpha r}{S_i \tilde{p}_0}}}{S_i (S_i p_0 + r)}\left[\frac{r (\tilde{x}_0 - x_0) - (B_i - \tilde{x}_0 S_i ) p_0 }{\tilde{p}_0}\Ei_{\alpha}\left(\frac{\alpha r}{S_i \tilde{p}_0}\right) + (B_i - S_i x_0) \Ei_{\alpha+1}\left(\frac{\alpha r}{S_i \tilde{p}_0}\right)\right],\label{eq:ppxm}\\
  \mathbb{E}[\Delta p_i^2] &= \frac{\alpha M_i^4 r^4 e^{\frac{\alpha r}{S_i \tilde{p}_0}}}{S_i^2 {(S_i p_0 + r)}^2} \left[\begin{aligned}\phantom{+}&\frac{\alpha p_0^2}{\tilde{p}_0^2} \Ei_{\alpha-1}\left(\frac{\alpha r}{S_i \tilde{p}_0}\right)
      + \frac{\alpha p_0 (2 \tilde{p}_0 - p_0)}{\tilde{p}_0^2}\Ei_{\alpha}\left(\frac{\alpha r}{S_i \tilde{p}_0}\right)\\
      +& \frac{\tilde{p}_0 + \alpha (\tilde{p}_0 - 2 p_0)}{\tilde{p}_0}\Ei_{\alpha+1}\left(\frac{\alpha r}{S_i \tilde{p}_0}\right)
      -(\alpha + 1) \Ei_{\alpha+2}\left(\frac{\alpha r}{S_i \tilde{p}_0}\right)\end{aligned}\right],\label{eq:pppv}\\
      \begin{split}
  \mathbb{E}[\Delta x_i^2] &= \frac{\alpha  r^2 M_i^2 e^{\frac{\alpha  r}{S_i \tilde{p}_0}}}{ S_i^2 \tilde{p}_0^2 {\left(S_i p_0 + r \right)}^2}\\
                             &\phantom{=}\left[
                             \begin{aligned}        
                               \phantom{+}&\alpha  {\left((B_i-\tilde{x}_0 S_i)p_0 + r(x_0 - \tilde{x}_0)\right)}^2 \Ei_{\alpha -1}\left(\frac{\alpha r}{S_i \tilde{p}_0}\right)\\
                               +& \alpha \left((B_i-\tilde{x}_0 S_i)p_0 + r(x_0 - \tilde{x}_0)\right) \left[\begin{aligned}
                                   -&(p_0 + 2\tilde{p}_0)\left(B_i - \frac{p_0\tilde{x}_0 + 2\tilde{p}_0 x_0}{p_0 + 2\tilde{p}_0} S_i\right)\\
                                   +&r\left(\tilde{x}_0-x_0\right)
                                 \end{aligned}
                               \right] \Ei_{\alpha }\left(\frac{\alpha r}{S_i \tilde{p}_0}\right)\\
                               +&\tilde{p}_0 \left(B_i-x_0 S_i\right) \left[\begin{aligned}
                                   \phantom{+}&(2\alpha p_0 + (\alpha + 1) \tilde{p}_0)\left(B_i - \frac{2\alpha\tilde{x}_0 p_0 + (\alpha + 1)x_0 \tilde{p}_0}{2\alpha p_0 + (\alpha + 1) \tilde{p}_0}S_i\right)\\
                                   +&2\alpha r(x_0 - \tilde{x}_0)
                                 \end{aligned}
                               \right] \Ei_{\alpha +1}\left(\frac{\alpha r}{S_i \tilde{p}_0}\right)\\
                               -&(\alpha +1) \tilde{p}_0^2{\left(B_i-x_0 S_i\right)}^2 \Ei_{\alpha +2}\left(\frac{\alpha r}{\tilde{p}_0 S_i}\right) 
                             \end{aligned}
                                  \right].\end{split}\label{eq:ppxv}
\end{align}

We first show that for certain special cases both the mean and the variance of the discrepancy approach zero, then we would show degeneracy of the perturbed problem.

\subsection{Convergence of the SPEnKF to the scalar KF as ensemble size grows to infinity}%
\label{subsec:ppfe}

The first case that we can look at is the one of the limiting case of the ensemble size growing to infinity. For the two algorithms to converge in ensemble size, their initial inputs have to be identical, as the algorithms operating on arbitrarily different inputs would necessitate arbitrarily different output.

\begin{theorem}\label{thm:enslimvar}
  When the initial inputs to the scalar Kalman filter and the SPEnKF are identical, $\tilde{x}_0=x_0$, $\tilde{p}_0 = p_0$, then for all steps $i$, in the  limit of ensemble size $\alpha\to\infty$, the expected value and variance of the discrepancy in the analysis variance are zero:
\begin{equation}\begin{split}
    \lim_{\alpha\to\infty} \mathbb{E}[\Delta p_i] &= 0\\
    \lim_{\alpha\to\infty} \left(\mathbb{E}[\Delta p_i^2] - \mathbb{E}{[\Delta p_i]}^2\right) &= 0.
\end{split}
  \end{equation}
\end{theorem}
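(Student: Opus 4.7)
The plan is to substitute $\tilde{p}_0 = p_0$ into the closed-form expressions (4.2) and (4.4), collapse the resulting linear combinations of $\Ei_n$-terms (all at the common argument $\alpha c$ with $c := r/(S_i p_0)$) to a single $\Ei_\alpha(\alpha c)$ plus a multiple of $e^{-\alpha c}$ via the integration-by-parts recurrence
\[
  n\,\Ei_{n+1}(z) = e^{-z} - z\,\Ei_n(z),
\]
and then read off the limit from a Watson-type asymptotic expansion of $\Ei_\alpha(\alpha c)$ as $\alpha \to \infty$.

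For the mean, setting $\tilde{p}_0 = p_0$ reduces the bracket in (4.2) to $\Ei_{\alpha+1}(\alpha c) - \Ei_\alpha(\alpha c)$, and one application of the recurrence yields $\alpha e^{\alpha c}[\Ei_{\alpha+1}(\alpha c) - \Ei_\alpha(\alpha c)] = 1 - (c+1)\,\alpha e^{\alpha c}\,\Ei_\alpha(\alpha c)$. Writing
\[
  \Ei_\alpha(\alpha c) = e^{-\alpha c}\int_0^\infty e^{-\alpha[c u + \ln(1+u)]}\,du
\]
(via the substitution $t = 1+u$) and applying Watson's lemma at the endpoint $u = 0$, where the phase derivative equals $c+1 > 0$, gives the expansion
\[
  \alpha e^{\alpha c}\,\Ei_\alpha(\alpha c) = \frac{1}{c+1} + \frac{1}{\alpha(c+1)^3} + O(1/\alpha^2).
\]
The leading $1$ cancels, and the $\alpha$-independent prefactor $M_i^2 r^2/[S_i(S_i p_0 + r)]$ converts this to $\mathbb{E}[\Delta p_i] = O(1/\alpha) \to 0$.

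For the variance, since $(\mathbb{E}[\Delta p_i])^2 \to 0$ it suffices to show $\mathbb{E}[\Delta p_i^2] \to 0$. Iterating the recurrence on the bracket in (4.4) with $\tilde{p}_0 = p_0$ reduces it to the form $A(\alpha)\,\Ei_\alpha(\alpha c) + B(\alpha)\,e^{-\alpha c}$ with $A, B$ rational in $\alpha$; substituting the expansion above and verifying cancellation at both the leading and next-to-leading orders gives the desired decay. A robust probabilistic verification is also available: from
\[
  \Delta p_i = \frac{M_i^2 r^2 (\hat{\tilde{p}}_0 - p_0)}{(S_i \hat{\tilde{p}}_0 + r)(S_i p_0 + r)},
\]
together with $\hat{\tilde{p}}_0 \sim \Gamma(\alpha, \alpha/p_0)$ having $\Var(\hat{\tilde{p}}_0) = p_0^2/\alpha \to 0$ and the uniform bound $|\Delta p_i| \leq M_i^2 r/S_i$, dominated convergence yields $\mathbb{E}[\Delta p_i^k] \to 0$ for every $k \ge 1$.

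The main obstacle is the variance step through the exponential-integral route: because the prefactor of (4.4) already carries an explicit factor of $\alpha$, the first-order Watson cancellation that suffices for the mean only brings $\mathbb{E}[\Delta p_i^2]$ down to $O(1)$. Obtaining the true $o(1)$ decay requires tracking two successive orders of the Watson expansion and verifying that they interact correctly with the $\alpha$-polynomial coefficients $A(\alpha), B(\alpha)$; this bookkeeping is the delicate part, and the probabilistic route above serves both as a sanity check on the analytic cancellation and, if desired, as a self-contained alternative proof.
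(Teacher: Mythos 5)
Your proposal is correct, and it is worth comparing the two halves separately. For the mean, your route (substitute $\tilde{p}_0=p_0$, collapse the $\Ei$-terms with the recurrence $n\Ei_{n+1}(z)=e^{-z}-z\Ei_n(z)$, then expand $\alpha e^{\alpha c}\Ei_\alpha(\alpha c)$ asymptotically) is in substance identical to the paper's: the paper simply packages exactly this computation into Corollary~\ref{cor:expectedconverge} for a general random variable $Y=(aX+b)/(cX+d)$ with $X\sim\Gamma(\alpha,\alpha/p)$, proves it once in the appendix via the same recurrence and the expansion~\eqref{eq:assEi}, and then reads off $\lim_\alpha\mathbb{E}[\Delta p_i]=(a p_0+b)/(c p_0+d)=0$ by identifying the coefficients. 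For the second moment, the paper again invokes a general appendix result (Corollary~\ref{cor:varianceconverge}, which carries out the two-order cancellation you correctly flag as the delicate step, showing $\lim_\alpha\mathbb{E}[Y^2]=(\lim_\alpha\mathbb{E}[Y])^2$), whereas your probabilistic alternative is genuinely different and arguably cleaner: since
\begin{equation*}
  \Delta p_i \;=\; \frac{M_i^2 r^2\,(\hat{\tilde{p}}_0-p_0)}{(S_i\hat{\tilde{p}}_0+r)(S_i p_0+r)},
  \qquad \lvert \Delta p_i\rvert \le \frac{M_i^2 r}{S_i},
  \qquad \Var(\hat{\tilde{p}}_0)=\frac{p_0^2}{\alpha}\to 0,
\end{equation*}
convergence in probability of $\hat{\tilde{p}}_0$ to $p_0$ plus the uniform bound gives $\mathbb{E}[\Delta p_i^{\,k}]\to 0$ for every $k$ by bounded convergence, with no special-function bookkeeping at all. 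What the paper's route buys in exchange is a convergence rate (the sublinear rate recorded in Corollary~\ref{cor:expectedconverge}) and reusability: the same two corollaries are applied verbatim to the analysis-mean discrepancy $\Delta x_i$ in Theorem~\ref{thm:enslimmean}, where the numerator no longer factors as neatly and your dominated-convergence shortcut would need the extra observation that $\Delta x_i$ is a bounded continuous function of $\hat{\tilde{p}}_0$ vanishing at $p_0$. Both of your routes are sound; only the minor caveat applies that the equation numbers you cite as (4.2) and (4.4) correspond to~\eqref{eq:pppm} and~\eqref{eq:pppv} in the paper's numbering.
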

\begin{proof}
by Corollary~\ref{cor:expectedconverge}, and Corollary~\ref{cor:varianceconverge},
\begin{equation}\begin{split}
    \lim_{\alpha\to\infty} \mathbb{E}[\Delta p_i] &= \frac{M_i^2 r^2 \tilde{p}_0 - M_i^2 r^2 p_0}{S_i(S_i p_0 + r) \tilde{p}_0 + r(S_i p_0 + r)}\\
                                                  &= \frac{M_i^2 r^2 p_0 - M_i^2 r^2 p_0}{{(S_i p_0 + r)}^2} = 0,\\
    \lim_{\alpha\to\infty} \left(\mathbb{E}[\Delta p_i^2] - \mathbb{E}{[\Delta p_i]}^2\right) &= \lim_{\alpha\to\infty} \mathbb{E}[\Delta p_i^2] = {\left(\frac{M_i^2 r^2 \tilde{p}_0 - M_i^2 r^2 p_0}{S_i(S_i p_0 + r) \tilde{p}_0 + r(S_i p_0 + r)}\right)}^2\\
                                                  &= {\left(\frac{M_i^2 r^2 p_0 - M_i^2 r^2 p_0}{{(S_i p_0 + r)}^2}\right)}^2 = 0.
\end{split}
  \end{equation}
  as required.
\end{proof}

\begin{theorem}\label{thm:enslimmean}
   When the initial inputs to the scalar Kalman filter and the SPEnKF are identical, $\tilde{x}_0=x_0$, $\tilde{p}_0 = p_0$,then for all steps $i$, in the  limit of ensemble size $\alpha\to\infty$, the expected value and variance of the discrepancy in the analysis mean are zero:
\begin{equation}\begin{split}
    \lim_{\alpha\to\infty} \mathbb{E}[\Delta x_i] &= 0\\
    \lim_{\alpha\to\infty} \left(\mathbb{E}[\Delta x_i^2] - \mathbb{E}{[\Delta x_i]}^2\right) &= 0,
\end{split}\end{equation}
\end{theorem}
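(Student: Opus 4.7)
The plan is to mirror the proof of Theorem~\ref{thm:enslimvar}: first apply Corollary~\ref{cor:expectedconverge} and Corollary~\ref{cor:varianceconverge} to the generalized exponential integral combinations appearing in \eqref{eq:ppxm} and \eqref{eq:ppxv}, and then substitute the matched-input assumptions $\tilde{x}_0 = x_0$, $\tilde{p}_0 = p_0$ to see that the resulting rational limits collapse to zero.

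For the mean, I would first hold $\tilde{x}_0, \tilde{p}_0$ generic and apply Corollary~\ref{cor:expectedconverge} to the combination of $\Ei_\alpha$ and $\Ei_{\alpha+1}$ in \eqref{eq:ppxm}. This should produce a closed-form rational asymptotic for $\lim_{\alpha\to\infty} \mathbb{E}[\Delta x_i]$ whose numerator is a linear combination of $(\tilde{x}_0 - x_0)$ and $(\tilde{p}_0 - p_0)$, weighted by scale factors built from $M_i, B_i, S_i, r$ (in perfect parallel to the numerator $M_i^2 r^2(\tilde{p}_0 - p_0)$ that appears in the variance case of Theorem~\ref{thm:enslimvar}). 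Setting $\tilde{x}_0 = x_0$ and $\tilde{p}_0 = p_0$ zeroes every summand of that numerator, proving the first claim.

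For the variance, since the mean limit is zero it suffices to show $\lim_{\alpha\to\infty} \mathbb{E}[\Delta x_i^2] = 0$. Substituting $\tilde{x}_0 = x_0$, $\tilde{p}_0 = p_0$ into \eqref{eq:ppxv} already simplifies each bracketed coefficient dramatically: the $\Ei_{\alpha-1}$ coefficient, proportional to $[(B_i - \tilde{x}_0 S_i)p_0 + r(x_0 - \tilde{x}_0)]^2$, collapses onto $[(B_i - x_0 S_i) p_0]^2$; the compound inner ratios multiplying the $\Ei_\alpha$ and $\Ei_{\alpha+1}$ terms telescope to clean multiples of $(B_i - x_0 S_i)$; and the $\Ei_{\alpha+2}$ coefficient is already of the right form. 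I would then apply Corollary~\ref{cor:varianceconverge} to the simplified sum, and expect, in complete analogy with Theorem~\ref{thm:enslimvar}, that the asymptotic result equals the square of the asymptotic value of $\mathbb{E}[\Delta x_i]$, which is zero by the first part.

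The main obstacle is the bookkeeping in \eqref{eq:ppxv}: the four exponential integrals have adjacent subscripts $\alpha - 1, \alpha, \alpha + 1, \alpha + 2$ tied together by the recurrence $\Ei_{n+1}(z) = \bigl(e^{-z} - z\,\Ei_n(z)\bigr)/n$, so any asymptotic reduction has to track several cancellations simultaneously. Performing the substitution $\tilde{x}_0 = x_0$, $\tilde{p}_0 = p_0$ \emph{before} invoking the corollary seems safer, because this immediately kills the mixed inner ratios (for example $\frac{p_0 \tilde{x}_0 + 2\tilde{p}_0 x_0}{p_0 + 2\tilde{p}_0} S_i$ reduces to $x_0 S_i$), and leaves a much simpler linear combination of the four $\Ei$-functions to which the asymptotic corollary applies directly.
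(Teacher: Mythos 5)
Your proposal is correct and takes essentially the same route as the paper: apply Corollaries~\ref{cor:expectedconverge} and~\ref{cor:varianceconverge} and then substitute $\tilde{x}_0 = x_0$, $\tilde{p}_0 = p_0$ so the resulting rational limits collapse to zero. The only thing you miss is a shortcut that removes the bookkeeping you worry about: $\Delta x_i$ is itself of the linear-fractional form $\frac{a\hat{\tilde{p}}_0+b}{c\hat{\tilde{p}}_0+d}$ with $a = M_i r(B_i - S_i x_0)$, $b = M_i r(S_i p_0\tilde{x}_0 + r\tilde{x}_0 - B_i p_0 - r x_0)$, $c = S_i(S_i p_0 + r)$, $d = r(S_i p_0 + r)$, so both corollaries apply to it directly and give $\lim_{\alpha\to\infty}\mathbb{E}[\Delta x_i] = \frac{a\tilde{p}_0+b}{c\tilde{p}_0+d}$ and $\lim_{\alpha\to\infty}\mathbb{E}[\Delta x_i^2] = \bigl(\lim_{\alpha\to\infty}\mathbb{E}[\Delta x_i]\bigr)^2 = 0$ without ever touching the four-term $\Ei$ expansion in~\eqref{eq:ppxv}.
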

\begin{proof}
by Corollary~\ref{cor:expectedconverge}, and Corollary~\ref{cor:varianceconverge},
\begin{equation}\begin{split}
    \lim_{\alpha\to\infty} \mathbb{E}[\Delta x_i] &= \frac{M_i r(B_i - S_i x_0)\tilde{p}_0 + M_i r(S_i p_0 \tilde{x}_0 + r \tilde{x}_0 - B_i p_0 - r x_0)}{S_i(S_i p_0 + r) \tilde{p}_0 + r(S_i p_0 + r)}\\
                                                  &= \frac{M_i r(B_i - S_i x_0)p_0 - M_i r(B_i - S_i x_0)p_0 + M_i r(r x_0 - r x_0)}{{(S_i p_0 + r)}^2} = 0,\\
    \lim_{\alpha\to\infty} \left(\mathbb{E}[\Delta x_i^2] - \mathbb{E}{[\Delta x_i]}^2\right) &= \lim_{\alpha\to\infty} \mathbb{E}[\Delta x_i^2] = {\left(\frac{M_i r(B_i - S_i x_0)\tilde{p}_0 + M_i r(S_i p_0 \tilde{x}_0 + r \tilde{x}_0 - B_i p_0 - r x_0)}{S_i(S_i p_0 + r) \tilde{p}_0 + r(S_i p_0 + r)}\right)}^2\\
    &= {\left(\frac{M_i r(B_i - S_i x_0)p_0 - M_i r(B_i - S_i x_0)p_0 + M_i r(r x_0 - r x_0)}{{(S_i p_0 + r)}^2}\right)}^2 = 0,
\end{split}\end{equation}
as required.
\end{proof}

Theorem~\ref{thm:enslimvar} and theorem~\ref{thm:enslimmean} combined show that in the asymptotic case of large ensemble sizes the trivial SPEnKF converges in means to the exact scalar Kalman filter and that the variances collapse to zero.

\subsection{Analysis of the perturbed problem in the case of a finite ensemble}
Arbitrarily large ensembles are theoretically nice, but impractical. Running the data assimilation scheme for an arbitrarily large number of steps however, is practical. Assume now that we have a finite over-sampled ensemble, $1 < \alpha < \infty$. 

Observe also that~\eqref{eq:ppxm} and~\eqref{eq:pppm},
\begin{equation}\begin{split}
  \lim_{i\to\infty} \mathbb{E}[\Delta p_i] &=
                                             \alpha r^2
                                             \left[\lim_{i\to\infty} \frac{M_i^2}{S_i}\right]
                                             \left[\lim_{i\to\infty} \frac{1}{S_i p_0 + r}\right]
                                             \left[\lim_{i\to\infty}e^{\frac{\alpha r}{S_i \tilde{p}_0}}\left(
                                           \Ei_{\alpha+1}\left(\frac{\alpha r}{S_i \tilde{p}_0}\right)
                                             -\frac{p_0}{\tilde{p}_0}\Ei_{\alpha}\left(\frac{\alpha r}{S_i \tilde{p}_0}\right) \right)\right]\\
  \lim_{i\to\infty} \mathbb{E}[\Delta x_i] &= \begin{aligned}\phantom{+}&
    \alpha r
    \left[\lim_{i\to\infty} e^{\frac{\alpha r}{S_i \tilde{p}_0}}\left(\frac{M_i(B_i - x_0 S_i)}{S_i(S_i p_0 + r)} \Ei_{\alpha+1}\left(\frac{\alpha r}{S_i \tilde{p}_0}\right) - \frac{p_0}{\tilde{p}_0} \frac{M_i(B_i - \tilde{x}_0 S_i)}{S_i(S_i p_0 + r)} \Ei_{\alpha}\left(\frac{\alpha r}{S_i \tilde{p}_0}\right)\right)\right]\\
    +&
    \frac{\alpha r^2 (\tilde{x}_0 - x_0)}{\tilde{p}_0}
    \left[\lim_{i\to\infty} \frac{M_i}{S_i p_0 + r}\right]
    \left[\lim_{i\to\infty} e^{\frac{\alpha r}{S_i \tilde{p}}} \Ei_{\alpha}\left(\frac{\alpha r}{S_i \tilde{p}_0}\right)\right]
  \end{aligned}
\end{split}\end{equation}

Note that in the terms above, the cumulative normalized observation deviation~\eqref{eq:cumulativenormalizedobservationdeviation}, is normalized by an additional $S_i$, meaning that we need to look at the cumulative doubly normalized observation deviation.

\begin{lemma}[Weak convergence of cumulative doubly normalized observation deviation]
\label{lem:weakconvpert}
  The cumulative doubly normalized observation deviation converges to zero in probability if the step limit, meaning that:
\begin{equation}\begin{split}
    \lim_{i\to\infty}\Pr\left[\left\lvert\frac{M_i(B_i - \xt_0 S_i)}{S_i^2}\right\rvert > \epsilon\right] = 0, \quad \forall \epsilon > 0,
\end{split}\end{equation}
  unconditionally on model behavior.
\end{lemma}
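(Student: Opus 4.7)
The plan is to obtain $L^2$ convergence of the quantity in question, which automatically implies the required convergence in probability via Chebyshev. The key observation is that the denominator $S_i^2$ carries one extra factor of $S_i$ compared with the normalization in Lemma~\ref{lem:podurv}, and that single extra factor is what converts a bounded-variance statement into a vanishing-variance one. Concretely, I would write
\[
\frac{M_i(B_i - \xt_0 S_i)}{S_i^2} \;=\; \frac{1}{S_i}\cdot \frac{M_i(B_i - \xt_0 S_i)}{S_i},
\]
and invoke the proof of Lemma~\ref{lem:podurv}, which exhibits the second factor as a centered Gaussian with variance exactly $r M_i^2/S_i$. Pulling the deterministic scalar $1/S_i$ out of the variance, the random variable of interest is centered Gaussian with variance $r M_i^2/S_i^3$.

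It then suffices to show that $M_i^2/S_i^3 \to 0$ unconditionally on the model sequence $\{m_j\}$. Because $M_0 = 1$, the sequence $S_i$ is non-decreasing and satisfies $S_i \ge 1$ for all $i$, so it either diverges to $\infty$ or converges to some finite $S_\infty \ge 1$; these are the only two cases to dispatch. In the divergent case, Lemma~\ref{lem:msqsi} gives $M_i^2/S_i^3 = (M_i^2/S_i)/S_i^2 \le 1/S_i^2 \to 0$. In the convergent case, the Cauchy criterion applied to the series $\sum_l M_l^2 = S_\infty$ forces its terms $M_i^2$ to zero, so using $S_i \ge 1$ we have $M_i^2/S_i^3 \le M_i^2 \to 0$. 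Either way the variance vanishes in the step limit.

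Finally, Chebyshev's inequality delivers
\[
\Pr\!\left[\left\lvert\frac{M_i(B_i - \xt_0 S_i)}{S_i^2}\right\rvert > \epsilon\right] \;\le\; \frac{r M_i^2}{\epsilon^2\, S_i^3} \;\longrightarrow\; 0
\]
for every $\epsilon > 0$, which is exactly the stated weak convergence. I do not expect a real obstacle: the claim is essentially a direct corollary of Lemma~\ref{lem:podurv}, upgraded by one power of $S_i$. The only delicate point is the dichotomy in the asymptotic behavior of $S_i$, which the monotonicity argument above handles in a line apiece; no assumption on model growth is needed, matching the ``unconditionally on model behavior'' clause of the statement.
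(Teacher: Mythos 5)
Your proposal is correct and follows essentially the same route as the paper: both reduce the claim to showing that the variance $r M_i^2/S_i^3$ of the centered Gaussian $\sum_{l=0}^i S_i^{-1}\varepsilon_{l,i}$ vanishes, and then conclude convergence in probability. The only cosmetic difference is that the paper factors the variance as $r\left[\lim_{i\to\infty} S_i^{-1}\right]\left[\lim_{i\to\infty} M_i/S_i\right]^2$ and invokes Lemma~\ref{lem:cummodelas}, whereas you dispatch the limit by a dichotomy on whether $S_i$ converges; both are valid.
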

\begin{proof}
As in Lemma~\ref{lem:podurv}, observe that instead of $\varepsilon_{l,i}$, we deal with $S_i^{-1}\varepsilon_{l,i}$,
\begin{equation*}
  \begin{split}
    \frac{M_i(B_i - \xt_0 S_i)}{S_i^2} = \sum_{l=0}^i S_i^{-1}\varepsilon_{l,i},
\end{split}
  \end{equation*}
we require the variance of the mean to be zero,
\begin{equation*}\begin{split}
    \lim_{i\to\infty} \Var\left( \sum_{l=0}^i S_i^{-1}\varepsilon_{l,i}\right) = r\lim_{i\to\infty} \frac{M_i^2}{S_i^4} \sum_{l=0}^i M_l^2 = r \lim_{i\to\infty} \frac{M_i^2}{S_i^4} S_i = r\left[\lim_{i\to\infty} \frac{1}{S_i}\right]{\left[\lim_{i\to\infty} \frac{M_i}{S_i}\right]}^2 = 0,
\end{split}\end{equation*}
as required.
\end{proof}

\begin{lemma}[Strong convergence of cumulative doubly normalized observation deviation]\label{lem:strongconvpert}
 The cumulative doubly normalized observation deviation propagated forward by the model converges to zero almost surely,
\begin{equation}\begin{split}
    \Pr \left[\lim_{i\to\infty} \frac{M_i(B_i - \xt_0 S_i)}{S_i^2}\right] = 0,
\end{split}\end{equation}
conditionally, whenever $\limsup_{i\to\infty}\frac{i+1}{S_i} < \infty$.
\end{lemma}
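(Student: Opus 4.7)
The plan is to apply the first Borel--Cantelli lemma to the tail events $\{|T_i|>\epsilon\}$, where I write $T_i := M_i(B_i - \xt_0 S_i)/S_i^2$ for the quantity of interest. The natural starting point is the decomposition already established in Lemma~\ref{lem:podurv}: with $\varepsilon_l := y_l - \xt_l \sim \Ndist(0,r)$ i.i.d., we have
\[
T_i \;=\; \frac{M_i}{S_i^2}\sum_{l=0}^i M_l\,\varepsilon_l,
\]
so $T_i$ is a centered Gaussian whose variance simplifies to
\[
\sigma_i^2 \;:=\; \Var(T_i) \;=\; \frac{r\,M_i^2}{S_i^4}\sum_{l=0}^i M_l^2 \;=\; \frac{r\,M_i^2}{S_i^3} \;\leq\; \frac{r}{S_i^2},
\]
where the last bound uses $M_i^2 \leq S_i$ from Lemma~\ref{lem:msqsi}.

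Next I would feed in the hypothesis: $\limsup_{i\to\infty}(i+1)/S_i < \infty$ supplies constants $C>0$ and $i_0$ with $S_i \geq (i+1)/C$ for all $i \geq i_0$, hence $\sigma_i^2 \leq rC^2/(i+1)^2$ eventually. Chebyshev's inequality then produces, for every fixed $\epsilon>0$, the summable tail bound $\Pr[|T_i|>\epsilon] \leq rC^2/(\epsilon^2(i+1)^2)$. (Using the Gaussian tail bound in place of Chebyshev only sharpens this, but summability is already enough.) The first Borel--Cantelli lemma---requiring no independence of the $T_i$---yields $\Pr(|T_i|>\epsilon \text{ infinitely often}) = 0$ for each $\epsilon>0$. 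Intersecting the resulting full-measure events over $\epsilon = 1/n$, $n \in \mathbb{N}$, and observing that $\{T_i \not\to 0\} = \bigcup_n\{|T_i|>1/n \text{ i.o.}\}$, gives $\Pr(\lim_{i\to\infty} T_i = 0) = 1$, which is the claimed almost-sure convergence.

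The main piece of work is therefore the variance calculation together with matching the hypothesis to the correct decay rate: $\limsup (i+1)/S_i < \infty$ is essentially the weakest growth condition that forces the Borel--Cantelli series to converge, and any slower growth of $S_i$ would leave only the in-probability statement already proved in Lemma~\ref{lem:weakconvpert}. I do not anticipate any obstacle beyond this bookkeeping; in particular, the dependence among the $T_i$ across steps is not an issue since the first Borel--Cantelli lemma never requires it.
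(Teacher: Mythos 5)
Your proof is correct, and it reaches the conclusion by a genuinely different and more self-contained route than the paper. The paper's proof begins by asserting two ``criteria for strong convergence'' --- finiteness of $r \lim_i \sup_{0\leq l\leq i} M_i^2 M_l^2 (i+1)^2 S_i^{-4}$ and of $r\lim_i M_i^2 (i+1)^2 S_i^{-4}\sum_{l=0}^i M_l^2 (l+1)^{-2}$ --- which it then verifies using $M_l^2/S_i\leq 1$ (Lemma~\ref{lem:msqsi}) and the hypothesis $\limsup_i (i+1)/S_i<\infty$; the underlying strong-law-type theorem that makes these two conditions sufficient is never named or cited, which is the weakest point of the paper's argument, especially since the summands $\varepsilon_{l,i}$ of Lemma~\ref{lem:podurv} form a triangular array whose rows change with $i$. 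You instead bound $\Var(T_i)=rM_i^2/S_i^3\leq r/S_i^2\leq rC^2/(i+1)^2$ eventually, apply Chebyshev to get a summable tail, and invoke the first Borel--Cantelli lemma, which requires no independence or array structure at all; the final intersection over $\epsilon=1/n$ is the standard completion that the paper leaves implicit. Both proofs use the same variance computation and exploit the hypothesis in the same way (it is exactly what converts $1/S_i^2$ into a summable sequence), but your argument is fully elementary, names every tool it uses, and closes the gap the paper leaves by not identifying its convergence criterion. The only cosmetic remark is that the lemma's displayed equation is garbled as written ($\Pr$ applied to a limit, set equal to $0$); your reading of it as $\Pr(\lim_i T_i=0)=1$ is the intended one.
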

\begin{proof}
The criteria for strong convergence are that
\begin{equation*}
  \begin{split}
    r \lim_{i\to\infty} \sup_{0\leq l \leq i} \left\{ \frac{M_i^2 M_l^2 {(i+1)}^2}{S_i^4}\right\} &< \infty,\\
    r \lim_{i\to\infty} \frac{M_i^2 {(i+1)}^2}{S_i^4} \sum_{l=0}^{i} \frac{M_l^2}{{(l+1)}^2} < \infty.
\end{split}
  \end{equation*}
The first condition ensures that the variances of all the individual random variables are finite, by stating that in the limit, their supremum is. The second condition is for the sufficient decay in their variances.
  
Note that $\frac{M_i^2}{S_i} \leq 1$ by Lemma~\ref{lem:msqsi} (moreover $\frac{M_l^2}{S_i}\leq 1$ for all $l\leq i$) therefore,
\begin{equation*}
\begin{gathered}
    r \lim_{i\to\infty} \sup_{0\leq l \leq i} \left\{ \frac{M_i^2 M_l^2 {(i+1)}^2}{S_i^4}\right\} \leq r \lim_{i\to\infty} \frac{{(i+1)}^2}{S_i^2} < \infty,\\
    r \lim_{i\to\infty} \frac{M_i^2 {(i+1)}^2}{S_i^4} \sum_{l=0}^{i} \frac{M_l^2}{{(l+1)}^2} \leq r \lim_{i\to\infty} \frac{{(i+1)}^2}{S_i^2} \sum_{l=0}^{i} \frac{1}{{(l+1)}^2}  < \infty.
\end{gathered}
\end{equation*}
as required.
\end{proof}

\begin{corollary}
 In the case of imperfect truth, when $\xt_0$ is replaced with some arbitrary constant $c$, and with slight abuse of notation,
\begin{equation}\begin{split}
    \lim_{i\to\infty} \frac{M_i(B_i - c S_i)}{S_i^2} = 0.
\end{split}\end{equation}
  in probability always or almost surely whenever$\limsup_{i\to\infty}\frac{i+1}{S_i} < \infty$.
\end{corollary}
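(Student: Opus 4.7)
The plan is to reduce to Lemmas~\ref{lem:weakconvpert} and~\ref{lem:strongconvpert} by a simple additive decomposition that isolates the randomness from a purely deterministic drift. First I would write
\begin{equation*}
B_i - c S_i \;=\; (B_i - \xt_0 S_i) + (\xt_0 - c)\, S_i,
\end{equation*}
so that
\begin{equation*}
\frac{M_i(B_i - c S_i)}{S_i^2}
\;=\; \frac{M_i(B_i - \xt_0 S_i)}{S_i^2} \;+\; (\xt_0 - c)\,\frac{M_i}{S_i}.
\end{equation*}

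Next I would treat the two summands separately. The first summand is exactly the cumulative doubly normalized observation deviation, which by Lemma~\ref{lem:weakconvpert} converges to zero in probability unconditionally, and by Lemma~\ref{lem:strongconvpert} converges to zero almost surely under the extra hypothesis $\limsup_{i\to\infty}\frac{i+1}{S_i}<\infty$. The second summand is deterministic: the scalar $(\xt_0-c)$ contributes no randomness, and by Lemma~\ref{lem:cummodelas} the ratio $M_i/S_i$ tends to zero regardless of the model regime (Cases 1--4 of that lemma are exhaustive), so this summand tends to zero pointwise, and therefore both in probability and almost surely.

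Finally I would combine the two pieces using the standard fact that convergence to zero in probability (respectively, almost sure convergence to zero) is preserved under addition of a deterministic sequence tending to zero. This gives the ``in probability always'' half of the corollary unconditionally, and the ``almost surely'' half under $\limsup_{i\to\infty}\frac{i+1}{S_i}<\infty$, matching exactly the two regimes inherited from the parent lemmas.

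Honestly, I do not expect a real obstacle here: the content of the corollary is that shifting the reference point from the true initial state $\xt_0$ to any other constant $c$ is absorbed into an extra deterministic term proportional to $M_i/S_i$, which is already known to decay. The only point of care is notational—the limit in the corollary must be read in the probabilistic sense (in probability, or almost surely) inherited from whichever of Lemmas~\ref{lem:weakconvpert} or~\ref{lem:strongconvpert} is being invoked, since the left-hand side is still a random variable through the dependence of $B_i$ on the observations $y_l$.
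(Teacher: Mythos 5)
Your proposal is correct and follows essentially the same route as the paper: the paper uses the identical decomposition $B_i - cS_i = (B_i - \xt_0 S_i) + (\xt_0 - c)S_i$, handles the first summand via Lemmas~\ref{lem:weakconvpert} and~\ref{lem:strongconvpert} and the second via $\lim_{i\to\infty} M_i/S_i = 0$ from Lemma~\ref{lem:cummodelas}. Your closing remark about reading the limit in the appropriate probabilistic mode is a point the paper glosses over (it writes the limits as if deterministic), so your version is if anything slightly more careful.
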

\begin{proof}
\begin{equation*}\begin{split}
    \lim_{i\to\infty} \frac{M_i(B_i - c S_i)}{S_i^2}
    &= \lim_{i\to\infty} \frac{M_i(B_i - \xt_0 S_i) + M_i(\xt_0 S_i - c S_i)}{S_i^2}\\
    & = \lim_{i\to\infty} \frac{M_i(B_i - \xt_0 S_i)}{S_i^2} + \lim_{i\to\infty} \frac{M_i(\xt_0 S_i - c S_i)}{S_i^2}\\
    &= \lim_{i\to\infty} \frac{M_i(B_i - \xt_0 S_i)}{S_i^2} + \left[\lim_{i\to\infty} \frac{M_i}{S_i}\right] (\xt_0 - c)
      = 0.
\end{split}\end{equation*}
as required.
\end{proof}

\subsection{Optimal inflation factors}%
\label{subsec:opti}

From the form of \eqref{eq:pppm}, it can be surmised there exists a value of $\tilde{p}_0$ such that $\mathbb{E}[\Delta p_i]$ is zero for some particular value of $i$.

A natural thought is to find a multiplicative factor, $\theta$ such that $\tilde{p}_0 = \theta\, p_0$. In this context, $\theta$ is a heuristic multiplicative scaling factor that is applied to a covariance matrix, and is called inflation in the context of ensemble Kalman filters. We will use the term here to describe both initial (applied once at the beginning of the algorithm) and step-wise (applied at each step) scaling factors of our variances.

\begin{theorem}\label{thm:inflationinfinite}
There exists an initial inflation factor $\theta_{*}$ such that for the input variance value $\tilde{p}_0 = \theta_{*} p_0$, the expected value of the variance of the variance deviation in the perturbed problem, in the step limit, is zero, meaning that,
\begin{equation}
\lim_{i\to\infty}\mathbb{E}[\Delta p_i] = 0,
\end{equation}
which, from~\eqref{eq:pppm}, is equivalent to requiring that,
\begin{equation}\label{eq:thminfl}
\lim_{i\to\infty} e^{\frac{\alpha r}{S_i \tilde{p}_0}}\left( \Ei_{\alpha+1}\left(\frac{\alpha r}{S_i \tilde{p}_0}\right) - \frac{p_0}{\tilde{p}_0} \Ei_{\alpha}\left(\frac{\alpha r}{S_i \tilde{p}_0}\right)\right) = 0.
\end{equation}
\end{theorem}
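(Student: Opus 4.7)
The plan is to recast the defining condition~\eqref{eq:thminfl} as a single-variable problem in the inflation ratio $\theta := \tilde{p}_0/p_0$ and produce a root either by explicit computation or by the intermediate value theorem. Writing $z_i(\theta) := \alpha r / (S_i \theta p_0)$, the quantity inside the limit of~\eqref{eq:thminfl} takes the form
\begin{equation*}
F_i(\theta) := e^{z_i(\theta)}\Bigl(\Ei_{\alpha+1}(z_i(\theta)) - \tfrac{1}{\theta}\Ei_{\alpha}(z_i(\theta))\Bigr).
\end{equation*}
Since $S_i$ is non-decreasing with $S_0 = 1$, it has a (possibly infinite) limit $S_\infty \in [1,\infty]$, and I would split the argument into two cases.

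If $S_\infty = \infty$, then for each fixed $\theta > 0$ we have $z_i(\theta) \downarrow 0$. The asymptotics $\Ei_n(z) \to 1/(n-1)$ as $z \to 0^+$ (valid for $n > 1$ and obtained by dominated convergence on $\int_1^\infty t^{-n} e^{-zt}\,\diff{t}$) yield
\begin{equation*}
\lim_{i\to\infty} F_i(\theta) \;=\; \frac{1}{\alpha} - \frac{1}{\theta(\alpha-1)}.
\end{equation*}
Setting this equal to zero produces the explicit optimal inflation $\theta_* = \alpha/(\alpha-1)$, which is well-defined in the over-sampled regime $\alpha = N/2 > 1$. Note that this regime already covers every model satisfying the strong-convergence condition $\limsup_{i\to\infty}(i+1)/S_i < \infty$ of Lemma~\ref{lem:strongconvpert}.

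If instead $S_\infty < \infty$, then $z_i(\theta) \to z_\infty(\theta) := \alpha r/(S_\infty \theta p_0) \in (0,\infty)$ for each $\theta > 0$, and by continuity of $\Ei_n$ we get $F_i(\theta) \to F_\infty(\theta)$, where $F_\infty$ is $F_i$ with $z_i$ replaced by $z_\infty$. I would then examine $F_\infty$ at the boundary of $(0,\infty)$: as $\theta \to \infty$ we have $z_\infty \to 0$ and the Case~1 computation gives $F_\infty(\theta) \to 1/\alpha > 0$, while as $\theta \to 0^+$ we have $z_\infty \to \infty$, and the standard large-argument expansion $e^z\Ei_n(z) \sim 1/z$ yields $F_\infty(\theta) \sim (S_\infty p_0/(\alpha r))(\theta - 1) \to -S_\infty p_0 / (\alpha r) < 0$. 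Since $F_\infty$ is continuous on $(0,\infty)$, the intermediate value theorem then supplies a $\theta_* > 0$ with $F_\infty(\theta_*) = 0$, which in turn gives $\lim_{i\to\infty} F_i(\theta_*) = 0$.

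The main obstacle is the second case, where the defining equation for $\theta_*$ is implicit and depends on the model through $S_\infty$, so no closed form is available. Verifying the sign change at the boundary relies on the standard small- and large-$z$ asymptotics of $\Ei_n$, but once these are in hand existence follows immediately. The contrast between the two cases is itself informative: for models with enough cumulative variance growth one recovers the clean, model-independent formula $\theta_* = \alpha/(\alpha-1)$, whereas for models whose linearized propagator damps out quickly the optimal initial inflation becomes model-dependent.
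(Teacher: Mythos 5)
Your proof is correct, and it shares the paper's overall skeleton: reduce \eqref{eq:thminfl} to a root-finding problem in $\theta=\tilde p_0/p_0$ and split on whether $S_i\to\infty$ or $S_i\to S_\infty<\infty$. In the first case you and the paper do exactly the same thing, using $\Ei_n(0)=1/(n-1)$ to land on the explicit, model-independent value $\theta_*=\alpha/(\alpha-1)$ (both arguments implicitly need $\alpha>1$). The genuine divergence is in the bounded-$S_\infty$ case. The paper uses the recurrence $\alpha \Ei_{\alpha+1}(z)+z\Ei_\alpha(z)=e^{-z}$ to eliminate $\Ei_\alpha$, reducing the condition to $\mathfrak{E}_{\alpha+1}(z)=S_\infty p_0/(\alpha(S_\infty p_0+r))$ with $\mathfrak{E}_{\alpha+1}(z)=e^z\Ei_{\alpha+1}(z)$ strictly decreasing, and then inverts to get an explicit (if implicit-function-shaped) formula for $\theta_*$ together with uniqueness. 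You instead keep both exponential integrals and apply the intermediate value theorem to $F_\infty$, checking the sign at the two ends of $(0,\infty)$ via the small- and large-$z$ asymptotics of $e^z\Ei_n(z)$. Your route is lighter — it avoids the recurrence manipulation and the monotonicity claim for $\mathfrak{E}_{\alpha+1}$ — and it fully proves the existence statement as written; what it gives up is uniqueness and the closed-form expression for $\theta_*$, which the paper reuses almost verbatim in Lemma~\ref{lem:stepwiseinflation} to define the step-wise factors $\theta_i$ (with $S_i$ in place of $S_\infty$) and again when bounding the sequence $\theta_i$. As a small strengthening of your sign check at $\theta\to0^+$, the inequality $1/(z+n)<e^z\Ei_n(z)\le 1/(z+n-1)$ quoted in the appendix gives $F_\infty(\theta)<\frac{1}{z+\alpha}\bigl(1-\frac{1}{\theta}\bigr)<0$ for every $\theta<1$, so you do not even need the large-$z$ expansion there.
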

\begin{proof}

  It is trivially evident that the solution to~\eqref{eq:thminfl} is the root of the function,
\begin{equation*}\begin{split}
    \!J(\theta) = \theta - \lim_{i\to\infty}\frac{\Ei_{\alpha}\left(\frac{\alpha r}{S_i \theta p_0}\right)}{\Ei_{\alpha+1}\left(\frac{\alpha r}{S_i \theta p_0}\right)}
\end{split}\end{equation*}

  There are only two cases, $S_i \to \infty$ and $S_i\to S_\infty < \infty$, as $S_i$ is a strictly monotonically increasing sequence.
  When $S_i\to\infty$
  
\begin{equation*}\begin{split}
    \theta - \lim_{i\to\infty}\frac{\Ei_{\alpha}\left(\frac{\alpha r}{S_i \theta p_0}\right)}{\Ei_{\alpha+1}\left(\frac{\alpha r}{S_i \theta p_0}\right)} = \theta - \frac{\Ei_{\alpha}(0)}{\Ei_{\alpha + 1 }(0)} = \theta - \frac{\alpha}{\alpha - 1},
\end{split}\end{equation*}
and the exact value for the inflation factor is
\[
\theta_{*} = \alpha\,{(\alpha - 1)}^{-1}.  
\]

In the case when $S_i\to S_\infty < \infty$, we have to find the root of the function:
\begin{equation*}\begin{split}
    \theta - \lim_{i\to\infty}\frac{\Ei_{\alpha}\left(\frac{\alpha r}{S_i \theta p_0}\right)}{\Ei_{\alpha+1}\left(\frac{\alpha r}{S_i \theta p_0}\right)} &= \theta - \frac{\Ei_{\alpha}\left(\frac{\alpha r}{S_\infty \theta p_0}\right)}{\Ei_{\alpha+1}\left(\frac{\alpha r}{S_\infty \theta p_0}\right)} = \theta - \frac{S_\infty \theta p_0}{\alpha r e^{\frac{\alpha r}{S_\infty \theta p_0}}\Ei_{\alpha+1}\left(\frac{\alpha r}{S_\infty \theta p_0}\right)} + \frac{S_\infty \theta p_0}{r}\\
  e^{\frac{\alpha r}{S_\infty \theta p_0}}\Ei_{\alpha+1}\left(\frac{\alpha r}{S_\infty \theta p_0}\right) &= \frac{S_\infty p_0}{\alpha(S_\infty p_0 + r)}
\end{split}\end{equation*}
Let $\mathfrak{E}_{\alpha+1}(z) = e^z \Ei_{\alpha+1}(z)$, and $\mathfrak{E}^{-1}_{\alpha+1}(z)$ be the corresponding inverse, which, as $\mathfrak{E}_{\alpha+1}(z)$ is a strictly monotonically decreasing function on $[0, \infty)$, is implicitly defined on $(0, \alpha^{-1}]$. As $0 < \frac{S_\infty p_0}{\alpha(S_\infty p_0 + r)} < \frac{1}{\alpha}$,
\begin{equation*}\begin{split}
    \theta_{*} = {\left[\frac{S_\infty p_0}{\alpha r}\mathfrak{E}_{\alpha+1}^{-1}\left(\frac{S_\infty p_0}{\alpha(S_\infty p_0 + r)}\right)\right]}^{-1},
\end{split}\end{equation*}
 is the unique inflation factor satisfying the criterion.
\end{proof}
Note that in the `interesting case', when $S_i\to\infty$, $\theta_{*}$ only depends on the size of the ensemble and not on the asymptotic model behavior!

Note also, that this implies that there exists an inflation factor, such that if it is applied at the beginning of the algorithm, the variance perturbation will be zero for a particular finite step $i$.
\begin{lemma}\label{lem:stepwiseinflation}
There exists a step-wise inflation factor, $\theta_i$ such that when $\tilde{p}_0 = \theta_i p_0$, the variance deviation at a particular step, $i$, is zero, equivalently, 
\begin{equation}\begin{split}
  e^{\frac{\alpha r}{S_i \tilde{p}_0}}\left( \Ei_{\alpha+1}\left(\frac{\alpha r}{S_i \tilde{p}_0}\right) 
    - \frac{p_0}{\tilde{p}_0} \Ei_{\alpha}\left(\frac{\alpha r}{S_i \tilde{p}_0}\right)\right) = 0.
\end{split}\end{equation}
\end{lemma}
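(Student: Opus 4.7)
My plan is to observe that this lemma is the finite-step analogue of the $S_\infty<\infty$ case already handled inside the proof of Theorem~\ref{thm:inflationinfinite}, with the entire argument going through after the literal substitution $S_\infty \mapsto S_i$. First, I would note that since $e^{\alpha r/(S_i\tilde{p}_0)}$ is strictly positive for every finite $S_i>0$ and $\tilde{p}_0>0$, the vanishing condition in the lemma is equivalent to finding $\theta_i>0$ such that, with $\tilde{p}_0=\theta_i p_0$, the bracketed factor $\Ei_{\alpha+1}(\cdot)-\theta_i^{-1}\Ei_{\alpha}(\cdot)$ equals zero. Writing $z = \alpha r/(S_i\theta_i p_0)$ makes $\theta_i$ and $z$ reciprocally related through a single positive parameter, so the problem reduces to a one-dimensional root-finding question.

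Next, I would apply the same algebraic manipulations used in Theorem~\ref{thm:inflationinfinite}. Using the standard integration-by-parts recurrence $\alpha\,\Ei_{\alpha+1}(z) + z\,\Ei_{\alpha}(z) = e^{-z}$ for the generalized exponential integral, together with the identity $\alpha/\theta_i + z = \alpha(S_i p_0+r)/(\theta_i S_i p_0)$, the vanishing condition rewrites as
\[
\mathfrak{E}_{\alpha+1}\!\left(\frac{\alpha r}{S_i \theta_i p_0}\right) = \frac{S_i p_0}{\alpha(S_i p_0+r)},
\]
where, as before, $\mathfrak{E}_{\alpha+1}(z)=e^z\Ei_{\alpha+1}(z)$.

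Finally, I would invoke monotonicity exactly as in the preceding proof: $\mathfrak{E}_{\alpha+1}$ is continuous and strictly decreasing on $[0,\infty)$ with range $(0,\alpha^{-1}]$, so its inverse is well-defined on that interval. Because $S_i p_0/(S_i p_0+r)\in(0,1)$, the right-hand side above lies in $(0,\alpha^{-1})$, and the unique solution is
\[
\theta_i = \left[\frac{S_i p_0}{\alpha r}\,\mathfrak{E}_{\alpha+1}^{-1}\!\left(\frac{S_i p_0}{\alpha(S_i p_0+r)}\right)\right]^{-1}.
\]

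The lemma is therefore essentially a corollary of Theorem~\ref{thm:inflationinfinite}: the only new content is that $\theta_i$ now depends on the step $i$ through $S_i$, rather than collapsing to the constant $\alpha/(\alpha-1)$ obtained in the $S_i\to\infty$ branch, which forces inflation to be recomputed at each step. I do not foresee any genuine obstacle; the only items worth double-checking are that $\alpha>1$ (i.e.\ $N>2$) so that $\Ei_\alpha(0)$ is finite and $\mathfrak{E}_{\alpha+1}^{-1}$ is defined on a neighborhood of the target value, and that the recurrence-based rearrangement preserves equivalence rather than introducing spurious roots — both of which are routine.
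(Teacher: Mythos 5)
Your proof is correct and takes essentially the same route as the paper: the paper's own proof of this lemma is a one-line appeal to ``similar manipulation as in Theorem~\ref{thm:inflationinfinite}'' yielding exactly the closed form $\theta_i = \left[\frac{S_i p_0}{\alpha r}\mathfrak{E}_{\alpha+1}^{-1}\left(\frac{S_i p_0}{\alpha(S_i p_0 + r)}\right)\right]^{-1}$ that you derive. Your version simply spells out the substitution $S_\infty \mapsto S_i$ and the recurrence-based rearrangement that the paper leaves implicit.
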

\begin{proof}
By similar manipulation as in theorem~\ref{thm:inflationinfinite}, it is evident that
\begin{equation*}\begin{split}
    \theta_i = {\left[\frac{S_i p_0}{\alpha r}\mathfrak{E}_{\alpha+1}^{-1}\left(\frac{S_i p_0}{\alpha(S_i p_0 + r)}\right)\right]}^{-1},
\end{split}\end{equation*}
as required.
\end{proof}

Applying the optimal inflation factor $\theta_i$ for a particular step at the start of the algorithm is impractical as the whole algorithm would have to be re-run to the current step. In practically implemented ensemble-based methods, inflation is applied at every step, therefore we must generalize our approach to such a methodology.

\begin{lemma}
  The sequence of optimal step-wise initial inflation factors, defined by lemma~\ref{lem:stepwiseinflation},  is monotonically increasing and is bounded from below by one,
\begin{equation}\begin{split}
    1\leq \theta_i \leq \theta_{i+1}.
\end{split}\end{equation}
\end{lemma}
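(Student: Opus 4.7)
The plan is to recast the defining equation from Lemma~\ref{lem:stepwiseinflation} into a fixed-point form and read off both properties from monotonicity of a single function. Dividing by $e^{\alpha r/(S_i\tilde{p}_0)}>0$ and multiplying through by $\theta_i=\tilde{p}_0/p_0$, the equation rearranges to
\[
\theta_i = g(z_i),\qquad g(z) := \frac{\Ei_\alpha(z)}{\Ei_{\alpha+1}(z)},\qquad z_i := \frac{\alpha r}{S_i\,\theta_i\,p_0}.
\]
The lower bound $\theta_i\geq 1$ is immediate: since $t\geq 1$ gives $t^{-\alpha}\geq t^{-(\alpha+1)}$, the integral representation of $\Ei_n$ yields $\Ei_\alpha(z)\geq \Ei_{\alpha+1}(z)$ for all $z\geq 0$, hence $g\geq 1$ and $\theta_i=g(z_i)\geq 1$.

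For monotonicity in $i$, the central fact is that $g$ is non-increasing. Using $\Ei_n'(z)=-\Ei_{n-1}(z)$, the quotient rule yields
\[
g'(z) = \frac{\Ei_\alpha(z)^2 - \Ei_{\alpha-1}(z)\Ei_{\alpha+1}(z)}{\Ei_{\alpha+1}(z)^2}.
\]
Applying the Cauchy--Schwarz inequality to the positive measure $e^{-zt}\diff{t}$ on $(1,\infty)$ with the pair $t^{-(\alpha-1)/2}$ and $t^{-(\alpha+1)/2}$ yields the Tur\'an-type inequality $\Ei_\alpha(z)^2\leq \Ei_{\alpha-1}(z)\Ei_{\alpha+1}(z)$ (equivalently, $n\mapsto \log\Ei_n(z)$ is convex), so $g'(z)\leq 0$.

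To finish, invoke Assumption~\ref{ass:pm}: $S_{i+1}=S_i+M_{i+1}^2>S_i$ since $m_j\neq 0$. Substituting $\theta=\theta_i$ into the fixed-point equation at step $i+1$ gives argument $\alpha r/(S_{i+1}\theta_i p_0)\leq z_i$, hence $g(\alpha r/(S_{i+1}\theta_i p_0))\geq g(z_i)=\theta_i$. Thus the residual $R(\theta) := g(\alpha r/(S_{i+1}\theta p_0)) - \theta$ satisfies $R(\theta_i)\geq 0$, while $R(\theta)<0$ for every $\theta>\alpha/(\alpha-1)$ because $g\leq \alpha/(\alpha-1)$ uniformly. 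Continuity together with the uniqueness of $\theta_{i+1}$ furnished by the inverse $\mathfrak{E}^{-1}_{\alpha+1}$ in Lemma~\ref{lem:stepwiseinflation}, combined with the intermediate value theorem, place $\theta_{i+1}\in[\theta_i,\alpha/(\alpha-1)]$, giving $\theta_{i+1}\geq\theta_i$. The only nontrivial ingredient is the Tur\'an-type inequality used to control $g'$; everything else is elementary manipulation of the implicit equation.
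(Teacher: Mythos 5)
Your proof is correct, and it takes a genuinely different route from the paper's. The paper works from the explicit closed form of Lemma~\ref{lem:stepwiseinflation}: it defines the map $z \mapsto {\left[z\,\mathfrak{E}^{-1}_{\alpha+1}\left(\tfrac{1}{z^{-1}+\alpha}\right)\right]}^{-1}$ sending $S_i p_0/(\alpha r)$ to $\theta_i$, obtains the lower bound from the standard sandwich $\tfrac{1}{z+n} < e^{z}\Ei_n(z) \leq \tfrac{1}{z+n-1}$ (which gives $0 \leq \mathfrak{E}^{-1}_{\alpha+1}\left(\tfrac{1}{z^{-1}+\alpha}\right) \leq z^{-1}$), and proves monotonicity by explicitly differentiating this map via the inverse-function rule. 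You instead stay with the implicit fixed-point form $\theta_i = \Ei_{\alpha}(z_i)/\Ei_{\alpha+1}(z_i)$, get the lower bound from the elementary pointwise inequality $t^{-\alpha}\geq t^{-\alpha-1}$ on $(1,\infty)$, and get monotonicity from the Cauchy--Schwarz (Tur\'an-type) inequality $\Ei_{\alpha}^2 \leq \Ei_{\alpha-1}\Ei_{\alpha+1}$ combined with $S_{i+1}>S_i$ (Assumption~\ref{ass:pm}), an intermediate-value argument, and the uniqueness of the root already supplied by Lemma~\ref{lem:stepwiseinflation}. Your version buys a shorter and more checkable argument: the paper's derivative computation is lengthy, and its final sufficient condition $1 - z/{(\alpha z+1)}^4 > 0$ relies on sign information about the $\mathfrak{E}^{-1}_{\alpha+1}$ term in the denominator that is not fully spelled out, whereas your two inequalities are one-line consequences of the integral representation. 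What the paper's route buys in exchange is an explicit formula for the derivative of $\theta_i$ with respect to $S_i$, which your comparison argument does not provide. Two small points to make explicit in your write-up: the uniform bound $g \leq \alpha/(\alpha-1)$ is $g(0^{+}) = \Ei_{\alpha}(0)/\Ei_{\alpha+1}(0) = \alpha/(\alpha-1)$ together with the monotonicity of $g$ you have just established, and it needs $\alpha>1$, which the paper's over-sampling assumption guarantees; and $\Ei_{\alpha-1}(z)$ is finite only for $z>0$, which holds here since $z_i>0$.
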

\begin{proof}
Define the function $\!E(z) = {\left[z\mathfrak{E}^{-1}_{\alpha + 1}\left(\frac{1}{z^{-1} + \alpha}\right)\right]}^{-1}$, noting that $\!E\left(\frac{S_i p_0}{\alpha r}\right) = \theta_i$, and observe that
\begin{equation*}\begin{split}
    \mathfrak{E}'_{\alpha + 1}(x) &= e^x \Ei_{\alpha+1}(x)\left(1+\frac{\alpha}{x}\right) - \frac{1}{x},\\
    \frac{\mathrm{d} \mathfrak{E}^{-1}}{\mathrm{d} z} &= \frac{1}{\mathfrak{E}'_{\alpha + 1}\left(\mathfrak{E}^{-1}_{\alpha+1}\left(\frac{1}{z^{-1} + \alpha}\right)\right)} = \frac{\mathfrak{E}_{\alpha+1}^{-1}\left(\frac{1}{z^{-1} + \alpha}\right)}{{(\alpha z + 1)}\left(\mathfrak{E}_{\alpha+1}^{-1}\left(\frac{1}{z^{-1} + \alpha}\right)(\alpha^2 z + z + \alpha) - \alpha z - 1\right)},\\
    \frac{\mathrm{d} \!E}{\mathrm{d}z} &= \mathfrak{E}_{\alpha+1}^{-1}\left(\frac{1}{z^{-1} + \alpha}\right)\left[1+\frac{z}{{(\alpha z + 1)}^3\left(\mathfrak{E}_{\alpha+1}^{-1}\left(\frac{1}{z^{-1} + \alpha}\right)(\alpha^2 z + z + \alpha) - \alpha z - 1\right)}\right],
\end{split}\end{equation*}
and also observe that as $\mathfrak{E}^{-1}_{\alpha + 1}$ is monotonically decreasing, then by known inequalities,
\begin{equation*}\begin{split}
    0 \leq \mathfrak{E}^{-1}_{\alpha + 1} \left(\frac{1}{z^{-1} + \alpha}\right) \leq z^{-1},
\end{split}\end{equation*}
  therefore
\begin{equation*}\begin{split}
    1 = \frac{\alpha r}{S_i p_0} \frac{S_i p_0}{\alpha r}  \leq {\left[\frac{S_i p_0}{\alpha r} \mathfrak{E}^{-1}_{\alpha + 1}\left(\frac{1}{\frac{\alpha r}{S_i p_0} + \alpha}\right)\right]}^{-1} = {\left[\frac{S_i p_0}{\alpha r} \mathfrak{E}^{-1}_{\alpha + 1}\left(\frac{S_i p_0}{\alpha(S_i p_0 + r)}\right)\right]}^{-1} = \theta_i.
\end{split}\end{equation*}
A sufficient condition on $\frac{\mathrm{d} \!E}{\mathrm{d}z} > 0$ is that $1 - \frac{z}{{(\alpha z + 1)}^4} > 0$ which is evidently true $\forall z > 0$ when $\alpha \geq 1$. Thus as $\frac{S_i p_0}{\alpha r}$ is a monotonically increasing sequence by the definition of $S_i$,
\begin{equation*}\begin{split}
    \!E\left(\frac{S_i p_0}{\alpha r}\right) = \theta_i \leq \theta_{i+1} = \!E\left(\frac{S_{i+1} p_0}{\alpha r}\right),
\end{split}\end{equation*}
as required.
\end{proof}

\begin{corollary}
  The initial optimal inflation factor is the upper bound and the limit of the sequence of optimal inflation factors,
\begin{equation}\begin{split}
      \theta_i &\leq \theta_{*},\\
      \lim_{i\to\infty} \theta_i &= \theta_{*}.
\end{split}\end{equation}
\end{corollary}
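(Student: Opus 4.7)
The plan is to prove the limit $\lim_{i\to\infty}\theta_i=\theta_*$ directly, since the bound $\theta_i\leq \theta_*$ will then follow immediately: the previous lemma established that $\{\theta_i\}$ is monotonically increasing, and any monotonically increasing sequence is bounded above by its limit (whenever that limit exists).

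To compute the limit, I would use the closed-form expression $\theta_i = \mathcal{E}(S_i p_0/(\alpha r))$, with $\mathcal{E}(z) = [z\, \mathfrak{E}^{-1}_{\alpha+1}(1/(z^{-1}+\alpha))]^{-1}$, and split on the two cases appearing in Theorem~\ref{thm:inflationinfinite}. In the first case $S_i \to S_\infty < \infty$, I would just invoke continuity: since $\mathfrak{E}_{\alpha+1}$ is strictly monotonically decreasing and continuous on $[0,\infty)$, its inverse (and hence $\mathcal{E}$) is continuous on its domain, so the limit passes inside to give $\lim_{i\to\infty}\theta_i = \mathcal{E}(S_\infty p_0/(\alpha r))$, which is exactly the closed form of $\theta_*$ derived in Theorem~\ref{thm:inflationinfinite}.

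The substantive case is $S_i \to \infty$, where I must show $\mathcal{E}(z) \to \alpha/(\alpha-1)$ as $z \to \infty$. The argument is an indeterminate $\infty \cdot 0$: the inner argument $1/(z^{-1}+\alpha) \to 1/\alpha$ while $\mathfrak{E}_{\alpha+1}(0) = \Ei_{\alpha+1}(0) = 1/\alpha$, so $\mathfrak{E}^{-1}_{\alpha+1}(1/(z^{-1}+\alpha)) \to 0$. To resolve it, I would expand $\mathfrak{E}_{\alpha+1}(y)$ around $y=0$ by differentiating $\Ei_{\alpha+1}(y) = \int_1^\infty t^{-(\alpha+1)}e^{-yt}\diff{t}$ under the integral to obtain
\begin{equation*}
\Ei_{\alpha+1}(y) = \tfrac{1}{\alpha} - \tfrac{y}{\alpha-1} + O(y^2), \qquad \mathfrak{E}_{\alpha+1}(y) = \tfrac{1}{\alpha} - \tfrac{y}{\alpha(\alpha-1)} + O(y^2).
\end{equation*}
Inverting this local expansion gives $\mathfrak{E}^{-1}_{\alpha+1}(v) = \alpha(\alpha-1)(\tfrac{1}{\alpha}-v) + O((\tfrac{1}{\alpha}-v)^2)$ as $v \uparrow 1/\alpha$. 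Substituting $v = 1/(z^{-1}+\alpha)$ so that $\tfrac{1}{\alpha} - v = \tfrac{1}{\alpha(1+\alpha z)}$, I find
\begin{equation*}
z\,\mathfrak{E}^{-1}_{\alpha+1}\!\left(\tfrac{1}{z^{-1}+\alpha}\right) = \tfrac{(\alpha-1)z}{1+\alpha z} + O(z^{-1}) \longrightarrow \tfrac{\alpha-1}{\alpha},
\end{equation*}
so $\mathcal{E}(z) \to \alpha/(\alpha-1)$, which matches the closed form of $\theta_*$ in the $S_\infty=\infty$ branch of Theorem~\ref{thm:inflationinfinite}.

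The main obstacle is the second case: one has to produce enough of the Taylor expansion of $\mathfrak{E}_{\alpha+1}$ at the endpoint $y=0$ to invert it, and then track the asymptotic cancellation so that the $\infty\cdot 0$ product converges to the finite value $(\alpha-1)/\alpha$. The rest of the argument — continuity in Case 1 and invoking the monotonicity of $\{\theta_i\}$ to upgrade the limit to the upper bound $\theta_i \le \theta_*$ — is routine.
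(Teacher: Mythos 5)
Your proof is correct, and it is worth noting that the paper itself offers no proof of this corollary at all --- it is stated as an immediate consequence of the preceding monotonicity lemma and of Theorem~\ref{thm:inflationinfinite}, leaving implicit precisely the step you identify as the only substantive one. The upper bound and the finite-$S_\infty$ case are indeed routine (monotone increasing plus convergent implies bounded by the limit; continuity of $\mathfrak{E}_{\alpha+1}^{-1}$ handles $S_i \to S_\infty < \infty$). What you add is the verification that the closed form $\theta_i = \mathcal{E}\left(S_i p_0/(\alpha r)\right)$ actually tends to $\alpha(\alpha-1)^{-1}$ when $S_i \to \infty$: your expansion $\mathfrak{E}_{\alpha+1}(y) = \tfrac{1}{\alpha} - \tfrac{y}{\alpha(\alpha-1)} + O(y^2)$ is right (using $\Ei_{\alpha+1}'= -\Ei_{\alpha}$ and $\Ei_{\alpha}(0)=\tfrac{1}{\alpha-1}$, valid since the paper assumes $\alpha>1$), and the cancellation $z\,\mathfrak{E}^{-1}_{\alpha+1}\left(\tfrac{1}{z^{-1}+\alpha}\right) \to \tfrac{\alpha-1}{\alpha}$ checks out. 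An alternative, slightly more economical route to the same limit --- closer in spirit to how the paper computes $\theta_*$ in Theorem~\ref{thm:inflationinfinite} --- is to note that $\theta_i$ is the root of $\theta = \Ei_{\alpha}\left(\tfrac{\alpha r}{S_i\theta p_0}\right)/\Ei_{\alpha+1}\left(\tfrac{\alpha r}{S_i\theta p_0}\right)$ and that this ratio converges to $\Ei_{\alpha}(0)/\Ei_{\alpha+1}(0)=\alpha/(\alpha-1)$ uniformly for $\theta$ in a compact interval bounded away from zero, so the roots converge; your version via the explicit inverse avoids having to justify that the roots of converging functions converge, at the cost of the Taylor inversion. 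Either way the argument is sound.
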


We are now ready to describe sequential step-wise inflation factors, that can be applied continuously one after the other, while keeping both the expected value of the deviation of the variance, \eqref{eq:pppm}, and the expected value of the deviation of the mean, \eqref{eq:ppxm}, zero for every step $i$.

\begin{theorem}\label{thm:inf}
The application of the sequential step-wise inflation factors,
\begin{equation}\begin{split}
    \phi_{i+1} = \frac{\theta_{i+1}(S_i\theta_i p_0+r)}{\theta_i(S_i\theta_{i+1} p_0+r)}\label{eq:optinfphi},
\end{split}\end{equation}
to the forecast variance at the $i+1$th step, $p_{i+1}^\|f \leftarrow \phi_{i+1}p_{i+1}^\|f$, for all $i$, with $\phi_0 = \theta_0$ being applied at the initial time, and the sequential  addition of the true step-wise correction factor
\begin{equation}\begin{split}
    \psi_{i+1} = \frac{M_{i+1} (B_i - S_i \tilde{x}_0)(\theta_{i+1} - \theta_i ) p_0 r  }{(S_i\theta_{i+1} p_0+r)(S_i\theta_i p_0+r)}\label{eq:optcorpsi},
\end{split}\end{equation}
the forecast mean at the $i+1$th step, $x_{i+1}^\|f \leftarrow \psi_{i+1} + x_{i+1}^\|f$, is equivalent to applying $\theta_{i+1}$ at the initial onset of the algorithm.
\end{theorem}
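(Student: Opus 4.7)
The plan is induction on $i$, reducing the claim to two purely algebraic identities that follow directly from the closed-form expressions of Lemmas~\ref{lem:par} and~\ref{lem:analysismean}. For any positive $\theta$, the forecast statistics that the filter would produce if it had been initialized a single time with variance $\theta p_0$ and mean $\tilde{x}_0$ are, by~\eqref{eq:skfpa}--\eqref{eq:skfxa} propagated once by the model,
\begin{equation*}
\pf_{i+1}(\theta) = \frac{M_{i+1}^2\, r\, \theta p_0}{S_i \theta p_0 + r},
\qquad
\xf_{i+1}(\theta) = \frac{M_{i+1}\bigl(B_i\, \theta p_0 + r\, \tilde{x}_0\bigr)}{S_i \theta p_0 + r}.
\end{equation*}

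The induction hypothesis at step $i$ is that, after sequentially applying $\phi_0,\dots,\phi_i$ to the forecast variances and $\psi_0,\dots,\psi_i$ to the forecast means (with $\phi_0 = \theta_0$ and no nontrivial mean correction at step $0$, since the initial mean is already $\tilde{x}_0$), the analysis pair $(\xa_i,\pa_i)$ agrees with what the filter would produce using the single initial inflation $\tilde{p}_0 = \theta_i p_0$ and initial mean $\tilde{x}_0$. The base case is immediate. For the inductive step, linearity of the model propagation~\eqref{eq:skfvp} yields the pre-correction forecast pair $\bigl(\xf_{i+1}(\theta_i),\pf_{i+1}(\theta_i)\bigr)$ from the analysis of the preceding step directly.

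The remaining task is to verify the two algebraic identities
\begin{align*}
\frac{\pf_{i+1}(\theta_{i+1})}{\pf_{i+1}(\theta_i)} &= \frac{\theta_{i+1}(S_i \theta_i p_0 + r)}{\theta_i(S_i \theta_{i+1} p_0 + r)} = \phi_{i+1}, \\
\xf_{i+1}(\theta_{i+1}) - \xf_{i+1}(\theta_i) &= \frac{M_{i+1}(B_i - S_i \tilde{x}_0)(\theta_{i+1} - \theta_i) p_0 r}{(S_i \theta_{i+1} p_0 + r)(S_i \theta_i p_0 + r)} = \psi_{i+1}.
\end{align*}
The first is a one-line cancellation in the ratio of two instances of the closed form for $\pf_{i+1}(\theta)$. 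The second follows by placing the two fractions over the common denominator $(S_i \theta_{i+1} p_0 + r)(S_i \theta_i p_0 + r)$; upon expansion the $r^2 \tilde{x}_0$ terms cancel and the surviving cross terms factor as $r p_0 (\theta_{i+1} - \theta_i)(B_i - S_i \tilde{x}_0)$, reproducing exactly the numerator of $\psi_{i+1}$. Once both identities are in hand, multiplying $\pf_{i+1}(\theta_i)$ by $\phi_{i+1}$ yields $\pf_{i+1}(\theta_{i+1})$ and adding $\psi_{i+1}$ to $\xf_{i+1}(\theta_i)$ yields $\xf_{i+1}(\theta_{i+1})$. Since the scalar analysis step~\eqref{eq:skfa} is a deterministic function of the forecast pair, the corrected analysis at step $i+1$ coincides with the analysis the filter would produce using $\theta_{i+1}$ as a single initial inflation, closing the induction.

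The only genuinely nontrivial work is the algebraic verification of the $\psi_{i+1}$ identity, but this reduces to a careful cross-multiplication with a single cancellation of the $r^2 \tilde{x}_0$ terms; no probabilistic or analytic machinery beyond the deterministic closed-form expressions already established is required.
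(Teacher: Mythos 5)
Your proposal is correct and follows essentially the same route as the paper: the paper's proof is exactly the inductive verification that, assuming the step-$i$ analysis pair matches the $\theta_i$-initialized filter, multiplying the forecast variance by $\phi_{i+1}$ and adding $\psi_{i+1}$ to the forecast mean reproduces the $\theta_{i+1}$-initialized forecast pair, via the same two algebraic identities you state. Your write-up merely makes the induction scaffolding and the determinism of the analysis map explicit, which the paper leaves implicit.
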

\begin{proof}
Assume $\pa_{i} = \frac{M_{i}\theta_{i}p_0}{S_{i} \theta_{i}p_0 + r}r$, and $\xa_i=\frac{M_i(B_i\theta_i p_0 + r\tilde{x}_0)}{S_i p_0 + r}$ then
\begin{equation*}\begin{split}
  \phi_{i+1}\pf_{i+1} &= \frac{\theta_{i+1}(S_i\theta_i p_0+r)}{\theta_i(S_i\theta_{i+1} p_0+r)} \frac{M_{i+1}\theta_{i}p_0}{S_{i} \theta_{i}p_0 + r}r = \frac{M_{i+1}\theta_{i+1}p_0}{S_{i} \theta_{i+1}p_0 + r},\\
  \psi_{i+1} + x_{i+1}^\|f &= \frac{M_{i+1} (B_i - S_i \tilde{x}_0)(\theta_{i+1} - \theta_i ) p_0 r  }{(S_i\theta_{i+1} p_0+r)(S_i\theta_i p_0+r)} + \frac{M_{i+1}(B_i\theta_i p_0 + r\tilde{x}_0)}{S_i \theta_i p_0 + r} = \frac{M_{i+1}(B_i\theta_{i+1} p_0 + r\tilde{x}_0)}{S_i \theta_{i+1}p_0 + r},
\end{split}\end{equation*}
as required.
\end{proof}

In this way, we boot-strap step-wise correct inflation factors for sequentially applied inflation.

\begin{corollary}
  The sequential step-wise inflation factors are bounded from below by 1, and from above by $\alpha{(\alpha - 1)}^{-1}$,
\begin{equation}\begin{split}
    1\leq\phi_{i+1}\leq \frac{\alpha}{\alpha - 1}.
\end{split}\end{equation}
\end{corollary}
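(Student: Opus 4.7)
The plan is to reduce the claim to the monotonicity and upper-bound results that have just been established for the sequence $\theta_i$. The key preparatory step is to bring $\phi_{i+1}-1$ over a common denominator in \eqref{eq:optinfphi}; the $S_i\theta_i\theta_{i+1}p_0$ cross-terms cancel and one obtains the clean identity
\[
\phi_{i+1}-1 \;=\; \frac{r(\theta_{i+1}-\theta_i)}{\theta_i\,(S_i\theta_{i+1}p_0+r)}.
\]
Everything else follows from this single rewrite.

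From here the lower bound is immediate: the numerator is nonnegative by the monotonicity lemma $\theta_{i+1}\geq\theta_i$, and the denominator is strictly positive since $\theta_i\geq 1$, $p_0>0$, $r>0$, and $S_i\geq 1$. For the upper bound I would estimate the denominator crudely from below: using $\theta_i\geq 1$ and discarding the nonnegative term $S_i\theta_{i+1}p_0$ yields $\theta_i(S_i\theta_{i+1}p_0+r)\geq r$, hence
\[
\phi_{i+1} \;\leq\; 1 + (\theta_{i+1}-\theta_i) \;\leq\; \theta_{i+1},
\]
where the last inequality again uses $\theta_i\geq 1$. Invoking the preceding corollary to get $\theta_{i+1}\leq \theta_*=\alpha/(\alpha-1)$ closes the argument.

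The main obstacle, such as it is, lies in making sure the bound $\theta_{i+1}\leq\alpha/(\alpha-1)$ is legitimate in both cases of Theorem~\ref{thm:inflationinfinite}: the explicit value $\alpha/(\alpha-1)$ was derived only in the ``interesting'' case $S_i\to\infty$, while in the bounded case $S_i\to S_\infty<\infty$ the formula for $\theta_*$ is more intricate. In that case I would appeal to the monotonicity of the map $z\mapsto \mathcal{E}\!\left(\frac{S_i p_0}{\alpha r}\right)=\theta_i$ shown in the proof of the step-wise monotonicity lemma, together with its limit value $\alpha/(\alpha-1)$ as $z\to\infty$, to conclude that $\theta_*\leq \alpha/(\alpha-1)$ holds uniformly. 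Once that is noted, the corollary reduces to a single line of algebra plus two applications of earlier results.
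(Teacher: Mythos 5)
Your proof is correct and follows essentially the same route as the paper's: both arguments reduce the two bounds to the monotonicity $\theta_i\leq\theta_{i+1}$ together with the chain $\phi_{i+1}\leq\theta_{i+1}\leq\theta_{*}\leq\alpha{(\alpha-1)}^{-1}$, differing only in whether one clears denominators on $\phi_{i+1}$ itself (the paper bounds it by $\theta_{i+1}/\theta_i$) or on $\phi_{i+1}-1$ as you do. Your closing remark---that $\theta_{*}\leq\alpha{(\alpha-1)}^{-1}$ also holds in the bounded case $S_i\to S_\infty<\infty$ because the map $\mathcal{E}$ is increasing with limit $\alpha{(\alpha-1)}^{-1}$---addresses a step the paper's proof takes for granted and is a worthwhile addition.
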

\begin{proof}
For the lower bound, 
  \begin{equation*}\begin{split}
        \phi_{i+1} &= \frac{\theta_{i+1}(S_i\theta_i p_0+r)}{\theta_i(S_i\theta_{i+1} p_0+r)} = \frac{S_i\theta_i \theta_{i+1}p_0+r\theta_{i+1}}{S_i\theta_i\theta_{i+1} p_0+r\theta_i}\\
        &\geq \frac{S_i\theta_i \theta_{i+1}p_0+r\theta_{i+1}}{S_i\theta_i\theta_{i+1} p_0+r\theta_{i+1}} = 1.
  \end{split}\end{equation*}
For the upper bound,
  \begin{equation*}\begin{split}
        \phi_{i+1} &= \frac{\theta_{i+1}(S_i\theta_i p_0+r)}{\theta_i(S_i\theta_{i+1} p_0+r)} = \frac{S_i\theta_i \theta_{i+1}p_0+r\theta_{i+1}}{S_i\theta_i\theta_{i+1} p_0+r\theta_i}\\
        &\leq \frac{\theta_{i+1}}{\theta_i} \leq \theta_{*} \leq \frac{\alpha}{\alpha - 1},
  \end{split}\end{equation*}
as required.
\end{proof}

This means that there is concrete evidence for an inflation factor somewhere above one being applied sequentially, step-wise in various ensemble Kalman filters. Additionally, as applying the $\phi$ inflation, but ignoring the $\psi$ correction could potentially incur additional unbounded error, even if the sequence of corrections converges in probability in time, there is the potential for catastrophe, meaning that some time of sequential correction to the mean needs to be applied in Ensemble Kalman filtering. However, in a non-linear setting, the state is typically bounded, and therefore the absence of correction factors might dissipate in time (or be drowned out by the ensemble).

\subsection{Convergence of the perturbed problem in the case of a finite ensemble}%
\label{subsec:cfe}

We now have all the tools to prove the convergence of the SPEnKF in the case of a finite ensemble.

\begin{theorem}[Finite ensemble convergence of the analysis variance of the SPEnKF to that of the scalar KF]\label{thm:finvarconv}
  In the case of a finite ensemble $(\alpha < \infty)$,
  \begin{enumerate}
  \item $\mathbb{E}[\Delta p_i]$ converges to zero in the step limit $i\to \infty$, and is always zero when optimal sequential step-wise inflation~\eqref{eq:optinfphi} is applied at each step,
and
  \item $\mathbb{E}[\Delta p_i^2]$ converges to zero in the step limit.
  \end{enumerate}
\end{theorem}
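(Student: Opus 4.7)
The plan is to analyze the explicit closed-form expressions~\eqref{eq:pppm} and~\eqref{eq:pppv} directly, isolating a vanishing prefactor built out of $M_i^2/S_i$ and $1/(S_i p_0 + r)$ from a bounded bracketed expression in exponential integrals. The three ingredients that do the work are Lemma~\ref{lem:cummodelas} ($M_i/S_i \to 0$), Lemma~\ref{lem:msqsi} ($M_i^2/S_i \leq 1$), and the elementary fact that $\Ei_n$ is continuous and monotonically decreasing on $[0,\infty)$ with $\Ei_n(0)=1/(n-1)$ for $n>1$.

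For item (1), I first rewrite the prefactor in~\eqref{eq:pppm} as
\begin{equation*}
\frac{\alpha M_i^2 r^2}{S_i(S_i p_0 + r)} \;=\; \alpha r^2 \cdot \frac{M_i}{S_i}\cdot \frac{M_i}{S_i p_0 + r},
\end{equation*}
and observe that $M_i/S_i\to 0$ by Lemma~\ref{lem:cummodelas}, while $M_i/(S_i p_0+r)$ is uniformly bounded because $M_i^2\leq S_i$ (Lemma~\ref{lem:msqsi}) gives $M_i/(S_i p_0+r)\leq \sqrt{S_i}/(S_i p_0+r)$, which is bounded whether $S_i$ stays finite or diverges. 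Then I split into the two cases already used in Theorem~\ref{thm:inflationinfinite}: if $S_i\to\infty$ the argument $\alpha r/(S_i\tilde{p}_0)\to 0$ so the exponential tends to $1$ and the $\Ei$-bracket tends to a finite constant built from $\Ei_\alpha(0)$ and $\Ei_{\alpha+1}(0)$ (requiring $\alpha>1$); if $S_i\to S_\infty<\infty$, then $M_i\to 0$, so the $M_i^2$ in the numerator forces the prefactor to zero while the bracket stays bounded on a compact interval of arguments. Either way the product tends to zero. For the ``always zero'' half of item~(1), I combine Lemma~\ref{lem:stepwiseinflation} with Theorem~\ref{thm:inf}: the latter says that applying the sequential factors $\phi_{i+1}$ at each step is identical to running the algorithm from scratch with initial inflation $\theta_{i+1}$, and the former is exactly the statement that such an initial inflation kills $\mathbb{E}[\Delta p_i]$ at step $i$.

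For item (2), the same two-case analysis applies to~\eqref{eq:pppv}. The prefactor is now $\alpha M_i^4 r^4 / (S_i^2(S_i p_0+r)^2)$, which I bound using $M_i^4/S_i^2 = (M_i^2/S_i)^2\leq 1$ and, to get decay, using $M_i^2/S_i \to 0$ (a consequence of Lemma~\ref{lem:cummodelas} times $M_i/S_i\leq$ bounded). Concretely, writing
\begin{equation*}
\frac{M_i^4 r^4}{S_i^2(S_i p_0+r)^2} \;\leq\; \frac{r^4}{(S_i p_0+r)^2}\cdot \left(\frac{M_i^2}{S_i}\right)^2
\end{equation*}
(which is unnecessary when $S_i\to\infty$ and handled by $M_i\to 0$ when $S_i\to S_\infty$), shows the prefactor vanishes in both cases. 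The bracket in~\eqref{eq:pppv} is a linear combination of $\Ei_{\alpha-1}, \Ei_\alpha, \Ei_{\alpha+1}, \Ei_{\alpha+2}$ evaluated at $\alpha r/(S_i\tilde{p}_0)$, with coefficients that are polynomial in $p_0,\tilde{p}_0$ and do not depend on $i$; continuity and monotonicity of the $\Ei_n$'s again keep this bracket bounded along the relevant sequence of arguments (here I need $\alpha>2$ so that $\Ei_{\alpha-1}(0)$ is finite).

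The main obstacle I anticipate is the bookkeeping in~\eqref{eq:pppv}: its bracketed expression is unwieldy, and one has to argue uniformly (in $i$) that every $i$-dependent coefficient inside the bracket is bounded, because the $\Ei_n$ factors do not themselves go to zero in the regime $S_i\to\infty$. The cleanest way to do this is to note that all $i$-dependence inside the bracket enters only through $z_i := \alpha r /(S_i\tilde{p}_0)$, which lies in a compact subset of $[0,\infty)$ bounded away from $\infty$, so that $\mathfrak{E}_n(z_i):=e^{z_i}\Ei_n(z_i)$ is uniformly bounded; pulling $e^{\alpha r/(S_i \tilde{p}_0)}$ inside the bracket converts each term into a bounded $\mathfrak{E}_n(z_i)$ multiplied by a constant, after which the vanishing of the prefactor finishes the proof.
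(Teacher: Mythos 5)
Your proposal is correct and takes essentially the same route as the paper: both proofs read \eqref{eq:pppm} and \eqref{eq:pppv} as a prefactor built from $M_i^2/S_i$ and $1/(S_i p_0+r)$ that vanishes in the step limit (via Lemma~\ref{lem:cummodelas} and Lemma~\ref{lem:msqsi}) multiplied by a bracket of exponential integrals that stays bounded, and both obtain the ``always zero'' clause by combining Lemma~\ref{lem:stepwiseinflation} with Theorem~\ref{thm:inf}. If anything your write-up is more careful than the paper's, which asserts that the remaining terms in \eqref{eq:pppv} ``converge to constants'' without flagging the point you raise, namely that $\Ei_{\alpha-1}(0)$ is finite only for $\alpha>2$ (for $1<\alpha\le 2$ the bracket actually grows like $S_i^{2-\alpha}$, which the prefactor still dominates).
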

\begin{proof}
For $\mathbb{E}[\Delta p_i]$, recall from~\eqref{eq:pppm} that,
\begin{equation}\begin{split}
      \mathbb{E}[\Delta p_i] &= \frac{\alpha M_i^2 r^2 e^{\frac{\alpha r}{S_i \tilde{p}_0}}}{S_i (S_i p_0 + r)}\left[-\frac{p_0}{\tilde{p}_0}\Ei_{\alpha}\left(\frac{\alpha r}{S_i \tilde{p}_0}\right) + \Ei_{\alpha+1}\left(\frac{\alpha r}{S_i \tilde{p}_0}\right)\right],
\end{split}\end{equation}
and observe that
\begin{equation*}\begin{split}
    \lim_{i\to\infty} \frac{M_i^2}{S_i}
\end{split}\end{equation*}
  is bounded by a constant above, but converges linearly to zero when $\lim_{i\to\infty} \lvert M_i\rvert < \infty$. Note that when $M_i\not\to0$, $S_i\to\infty$, and the term
\begin{equation*}\begin{split}
    \lim_{i\to\infty} \frac{1}{S_i p_0 + r}
\end{split}\end{equation*}
  trivially converges to zero, linearly. Note also that
\begin{equation*}\begin{split}
e^{\frac{\alpha r}{S_i \theta_i p_0}}\left(
                                           \Ei_{\alpha+1}\left(\frac{\alpha r}{S_i \theta_i p_0}\right)
                                             -\frac{p_0}{\theta_i p_0}\Ei_{\alpha}\left(\frac{\alpha r}{S_i \theta_i p_0}\right) \right) = 0.
\end{split}\end{equation*}
thus the last term converges to zero when optimal inflation is applied, at each step.

For $\mathbb{E}[\Delta p_i^2]$, \eqref{eq:pppv}, it is trivial to observe that either $\lim_{i\to\infty}\frac{M_i^4}{S_i^2} = 0$, or $\lim_{i\to\infty}\frac{1}{{(S_i p_0 + r)}^2} = 0$ (or both), with the remaining terms converging to constants.
\end{proof}

\begin{theorem}[Finite ensemble convergence of the analysis mean of the SPEnKF to that of the scalar KF]\label{thm:finmeanconv}
  In the case of a finite ensemble $(\alpha < \infty)$, the term $\mathbb{E}[\Delta x_i]$:
  \begin{enumerate}
      \item converges to zero weakly always in the step limit,
      \item converges strongly when $\limsup_{i\to\infty}\frac{i+1}{S_i} < \infty$ in the step limit, and
      \item is always zero when $\tilde{x}_0 = x_0$, optimal inflation~\eqref{eq:optinfphi} and optimal correction~\eqref{eq:optcorpsi} are applied,
  \end{enumerate}
 and $\mathbb{E}[\Delta x_i^2]$ converges to zero weakly always in the step limit.
\end{theorem}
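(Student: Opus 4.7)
The plan is to mirror the structure of Theorem~\ref{thm:finvarconv}. Starting from the explicit expression \eqref{eq:ppxm} for $\mathbb{E}[\Delta x_i]$, I would factor out the deterministic prefactor $\alpha M_i r/[S_i(S_ip_0+r)]$, which carries the decaying $M_i/S_i$ guaranteed by Lemma~\ref{lem:cummodelas}, and control the observation-dependent remainder using the cumulative doubly-normalized observation deviation estimates of Lemmas~\ref{lem:weakconvpert} and~\ref{lem:strongconvpert}. Throughout, the exponential-integral factors $e^{z_i}\Ei_\alpha(z_i)$ and $e^{z_i}\Ei_{\alpha+1}(z_i)$, with $z_i=\alpha r/(S_i\tilde p_0)$, remain bounded: if $S_i\to\infty$ they tend to the constants $1/(\alpha-1)$ and $1/\alpha$, while if $S_i\to S_\infty<\infty$ then $M_l^2\le S_l$ together with summability of $M_l^2$ forces $M_i\to 0$, collapsing the prefactor outright.

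For claim~(1), I split the bracket in \eqref{eq:ppxm} into the deterministic contribution proportional to $r(\tilde x_0-x_0)$, which vanishes by Lemma~\ref{lem:cummodelas} alone, and the observation-dependent contributions, each of which reduces to a bounded multiple of
\begin{equation*}
\frac{M_i(B_i-cS_i)}{S_i(S_ip_0+r)} = \frac{M_i(B_i-cS_i)}{S_i^2}\cdot\frac{S_i}{S_ip_0+r}, \qquad c\in\{x_0,\tilde x_0\}.
\end{equation*}
The second factor is bounded by $1/p_0$, while the first converges to zero in probability by the corollary to Lemma~\ref{lem:weakconvpert}. Slutsky's theorem closes the argument. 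Claim~(2) is identical, with Lemma~\ref{lem:strongconvpert} replacing Lemma~\ref{lem:weakconvpert} and delivering almost-sure convergence under the stated limsup condition.

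For claim~(3), Theorem~\ref{thm:inf} asserts that the sequential application of $\phi_{j+1}$ and $\psi_{j+1}$ at every step $j\le i$ yields the same analysis state as initializing the SPEnKF with $\tilde p_0=\theta_i p_0$ and leaving $\tilde x_0=x_0$ untouched. Substituting $\tilde x_0=x_0$ into \eqref{eq:ppxm} collapses the bracket to $(B_i-S_ix_0)\bigl[-\tfrac{p_0}{\tilde p_0}\Ei_\alpha(z_i)+\Ei_{\alpha+1}(z_i)\bigr]$, which is exactly the expression that Lemma~\ref{lem:stepwiseinflation} kills for $\tilde p_0=\theta_i p_0$. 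Hence $\mathbb{E}[\Delta x_i]=0$ identically at every step.

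The last claim, on $\mathbb{E}[\Delta x_i^2]$, applies the same program to the four-term formula \eqref{eq:ppxv}. I would extract the prefactor $M_i^2/[S_i^2(S_ip_0+r)^2]$, absorb each $\Ei_{\alpha+k}$ into its bounded limiting constant, and rewrite every $M_i^2(B_i-cS_i)^2$ numerator as $[M_i(B_i-cS_i)/S_i^2]^2\cdot S_i^4$ so that the doubly-normalized observation deviation again drives convergence in probability. The main obstacle lies precisely here: the quadratic expansion in \eqref{eq:ppxv} produces several cross terms with $O(\alpha)$ coefficients that must be tracked carefully, ensuring that the numerator powers of $S_i$ match those supplied by the $(S_ip_0+r)^{-2}$ prefactor and that mixed $(B_i-x_0S_i)(B_i-\tilde x_0 S_i)$ factors are rewritten in terms of doubly-normalized deviations (using the simple identity $B_i-\tilde x_0 S_i=(B_i-x_0 S_i)+(x_0-\tilde x_0)S_i$) before passing to the limit.
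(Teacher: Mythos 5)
Your proposal is correct and follows essentially the same route as the paper: decompose \eqref{eq:ppxm} into the observation-dependent part controlled by Lemmas~\ref{lem:weakconvpert} and~\ref{lem:strongconvpert} and the deterministic $(\tilde{x}_0-x_0)$ part killed by Lemma~\ref{lem:cummodelas}, invoke Theorem~\ref{thm:inf} together with Lemma~\ref{lem:stepwiseinflation} for the exact-zero claim, and treat \eqref{eq:ppxv} by the same doubly-normalized-deviation argument. If anything, your handling of claim~(3) — explicitly collapsing the bracket to $(B_i-S_ix_0)\bigl[-\tfrac{p_0}{\tilde p_0}\Ei_\alpha(z_i)+\Ei_{\alpha+1}(z_i)\bigr]$ so that both terms of \eqref{eq:ppxm} visibly vanish under optimal inflation — is more careful than the paper's own one-line appeal to Theorem~\ref{thm:inf}.
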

\begin{proof}
For $\mathbb{E}[\Delta x_i]$, from~\eqref{eq:ppxm},
\begin{equation}\begin{split}
  \left[\lim_{i\to\infty} e^{\frac{\alpha r}{S_i \tilde{p}}}\left(\frac{M_i(B_i - x_0 S_i)}{S_i(S_i p_0 + r)} \Ei_{\alpha+1}\left(\frac{\alpha r}{S_i \tilde{p}_0}\right) - \frac{p}{\tilde{p}_0} \frac{M_i(B_i - \tilde{x}_0 S_i)}{S_i(S_i p_0 + r)} \Ei_{\alpha}\left(\frac{\alpha r}{S_i \tilde{p}_0}\right)\right)\right],
\end{split}\end{equation}
converges to zero in probability by Lemma~\ref{lem:weakconvpert}. If $\limsup_{i\to\infty}\frac{i+1}{S_i} < \infty$, by Lemma~\ref{lem:strongconvpert} this converges strongly to zero. The term
\begin{equation}\begin{split}
  \frac{\alpha r^2 (\tilde{x}_0 - x_0)}{\tilde{p}_0}
    \left[\lim_{i\to\infty} \frac{M_i}{S_i p_0 + r}\right]
  \left[\lim_{i\to\infty} e^{\frac{\alpha r}{S_i \tilde{p}}} \Ei_{\alpha}\left(\frac{\alpha r}{S_i \tilde{p}_0}\right)\right],
\end{split}\end{equation}
converges to zero in general as it has the term $\left[\lim_{i\to\infty} \frac{M_i}{S_i p_0 + r}\right]$, otherwise if $x_0=\tilde{x}_0$, with optimal sequential step-wise inflation and correction, the term is zero always by Theorem~\ref{thm:inf}.

For $\mathbb{E}[\Delta x_i^2]$, \eqref{eq:ppxv}, each term of the summation has two multiples of the term from Lemma~\ref{lem:weakconvpert}, thus converging to zero weakly always, and strongly if $\limsup_{i\to\infty}\frac{i+1}{S_i} < \infty$, by Lemma~\ref{lem:strongconvpert}.
\end{proof}

Theorems~\ref{thm:finvarconv} and~\ref{thm:finmeanconv} together show that there is strong evidence that the full ensemble Kalman filter can converge to the Kalman filter in expected value in the case of a finite ensemble, in finite time, provided that optimal corrections are made in the algorithm. Additionally we provide very strong evidence that sequential step-wise inflation, as performed in many flavours of the ensemble Kalman filter is not a heuristic, but in fact can be derived from the underlying distributions associated with it.

\subsection{SPEnKF with imaginary perturbations of observations}%
\label{subsec:po}

The idea of perturbed observations was first introduced in order to attempt to correct the ensemble Kalman filter~\cite{burgers1998analysis} from a statistical point of view under certain incorrect simlifications and assumptions. The wrongly assumed independence of the Kalman gain estimate from the anomalies and expected value of the Kalman gain estimate being the Kalman filter Kalman gain being just two. Augmenting the stochastic ensemble Kalman analysis update with a vector of `perturbed observations', $\*\Xi$, derived from the assumed distribution of the unbiased observation error, the update of the EnKF with perturbed observations, can be written as,
\begin{equation*}
  \*x^\|a = \*x^\|f - \*K (\*H \*x^\|f  + \*\Xi - \*y^o\*1^\intercal),
\end{equation*}
which we can decompose into the following two updates:
\begin{equation*}\begin{split}
  \bar{\*x}^\|a &= \bar{\*x}^\|f - \*K(\*H\bar{\*x}^\|f - \*y^o),\\
  \*A^\|a &= \*A^\|f - \*K(\*H\*A^\|f - \*\Xi),
\end{split}\end{equation*}
with the first just being the standard Kalman update, and the second being the unique stochastic EnKF anomaly update. In the scalar case we will again ignore $\*H$, as before and replace $\*A$ with $\*a$ and $\*\Xi$ with $\*\xi$.

In order to avoid difficulty with vector inner products, we will be looking at imaginary perturbed observations as a surrogate for true perturbed observations.  Empirical results suggest that this is a better filter than that with real perturbations, thus we can say with some confidence that results about this filter will be a lower bound for the full SPEnKF with perturbed observations, though a full analysis is, as of yet, not in our reach. Additionally we will not be looking at the asymptotic case of steps. Instead, we will be computing a perturbed observation update and a normal SPEnKF update on the SPEnKF forecast, and looking at the discrepancy between the two.

We therefore assume that in the analysis update below, $\*a_i^\|f$ was obtained with an ideal square-root filter, run from step 0 to step $i$, and that the $\*a_i^\|a$ that is obtained via the imaginary perturbed observation approach will be discarded in favor of another square root update. We will thus look at the update,
\begin{equation*}
  \*a_i^\|a = (1-\hat{k}_i) \*a_i^\|f + \mathrm{i}\hat{k}\*\xi_i,
\end{equation*}
where $\*\xi_i$ is an ensemble of $N$ samples from $\!N(0,r)$. We will also modify the analysis variance equation to account for complex conjugates, and observe:
\begin{equation*}\begin{split}
  \hat{p}_i^\|a &= \frac{1}{N} \left(\*a_i^\|a \cdot \overline{\*a_i^\|a}\right)\\
                &= {(1-\hat{k}_i)}^2 \hat{p}_i^\|f + \hat{k}_i^2 \hat{r}_i\\
                &= \hat{k}_i r + \hat{k}_i^2 (\hat{r}_i - r)
\end{split}\end{equation*}
Representing the realizations in terms of random variables, we will arrive at the fact that the random variable representing the new analysis update can be written in the form: $P = r K + K^2 (R - r)$. It can be trivially shown that $R \sim \Gamma\left(\alpha, \frac{\alpha}{r}\right)$, and thus $\mathbb{E}[R] = r$.
Looking at the moments of $P$, we manipulate:
\begin{equation*}\begin{split}
  \mathbb{E}[r K + K^2 (R - r)] &= \mathbb{E}[r K] + \mathbb{E}[K^2]\mathbb{E}[(R - r)],\\
                              &= r \mathbb{E}[K]\\
  \text{Cov}(r K,K^2 (R - r)) &= \mathbb{E}[(r K - r\mathbb{E}[K])(K^2 (R - r) - \mathbb{E}[K^2 (R - r)])]\\
                              &= \mathbb{E}[r K^3 (R - r) - r K^2 (R - r)\mathbb{E}[K]]\\
                              &= \mathbb{E}[r K^3 - r K^2 \mathbb{E}[K]]\mathbb{E}[(R - r)]\\
                              &= 0,\\
  \Var(r K + K^2 (R - r)) &= \Var(r K) + \Var(K^2 (R - r)) + 2\text{Cov}(r K, K^2 (R - r))\\
                              &= r^2 \Var(K) + \mathbb{E}[{(K^2 (R - r) - \mathbb{E}[K^2 (R - r)])}^2]\\
                              &= r^2 \Var(K) + \mathbb{E}[K^4]\mathbb{E}[{(R - r)}^2].
\end{split}\end{equation*}
Thus we see that the expected value of a perturbed observation filter is the same as of a perfect square root ensemble filter, however we do incur additional variance.

We can analyze this additional term, $\mathbb{E}[K^4]\mathbb{E}[{(R - r)}^2]$ in two different ways, in the asymptotic case of ensemble size, and in the step limit with a finite ensemble.

Note first that, without proof,
\begin{equation}\begin{split}
    \mathbb{E}[{(R_i - r)}^2] &= \frac{r^2}{\alpha},\\
  \mathbb{E}[K_i^4] &= \frac{M_i^8}{6 p^4 S_i^7}
                      \left[\begin{aligned}
                      \phantom{+}& p \left(p S_i \left(p S_i \left(6 p S_i+\alpha  \
(\alpha  (\alpha +7)+18) r\right)+(2 \alpha +9) \alpha ^2 \
                      r^2\right)+\alpha ^3r^3\right)\\
                      -& \frac{\alpha  r e^{\frac{\alpha  r}{p S_i}} \
\left((\alpha +3) p S_i \left((\alpha +2) p S_i \left((\alpha +1) p \
S_i+3 \alpha 
   r\right)+3 \alpha ^2 r^2\right)+\alpha ^3 r^3\right) E_{\alpha \
}\left(\frac{r \alpha }{p S_i}\right)}{S_i}
\end{aligned}\right].
\end{split}\end{equation}
The asymptotic case of ensemble size is by far the easiest:
\begin{equation}\begin{split}
  \lim_{\alpha\to\infty} \mathbb{E}[{(R_i - r)}^2] &= 0\\
   \lim_{\alpha\to\infty} \mathbb{E}[K_i^4] &= \frac{M_i^8 p_0 (2 r^2 - 3 S_i r p_0 + 3 S_i^2 p_0^2)}{3 S_i^3 {(S_i p_0 + r)}^3},
\end{split}\end{equation}
it is therefore the case that,
\begin{equation}\begin{split}
  \lim_{\alpha\to\infty} \mathbb{E}[K_i^4]\mathbb{E}[{(R_i - r)}^2] = 0.
\end{split}\end{equation}
This shows that there is significant evidence that in the asymptotic case of ensemble size, perturbed observation filters are as good as square-root filters. 

Let's now look at the case of  a finite ensemble in the step limit, and the worst case where $S_i$ grows roughly as fast as $M_i^2$,
\begin{equation}\begin{split}
  \lim_{i\to\infty}\mathbb{E}[K_i^4] = \text{const}.
\end{split}\end{equation}
This means that in the worst case, our variance has an additional constant term of $\frac{r^2}{\alpha}$, which can potentially be large. While we cannot claim that this will hold for non-imaginary perturbed observations, we postulate that this term is, in part responsible for some of the additional error that is seen in that type of filter compared to that of a square-root filter.

\section{Extending SPEnKF to Multivariate Case}
\label{sec:multivariate}

We will now attempt to extend the SPEnKF to a limited multivariate case. Assume now that we are looking at a multivariate state space, $\*x$ of size $n$, Assume additionally that we have a perfect model, whose step is represented by a matrix with independent action occurring in a constant basis throughout all time, that is, 
\begin{align}
    \*L_i = \*Z \*M_i \*Z^{-1},
\end{align}
with $\*M_i = \text{diag}(m_{i, 1}, \dots m_{i, n})$ being a diagonal matrix of real values, and $\*Z$ being any invertible constant matrix.

Let the initial input to our algorithm consist of a mean, $\bar{\*v}_0$, and a set of anomalies $\*B^\|f_0$ such that $\left[\*B^\|f_0\right]_{(:, {1\leq i\leq N})} \sim \!N(\*0, \*Z\*P_0\*Z^\intercal)$, where $\*P_0 = \text{diag}(p_{0, 1}, p_{0, 2}, \dots p_{0, n})$. Let all observations come from a normal distribution with a constant covariance matrix, $\*w_i \sim \!N(\*v_i^\|t, \*Z\*R\*Z^\intercal)$ with 
$\*R = \text{diag}(r_1, \dots r_n)$. Converting out of the linear basis, we get the familiar notation,
\begin{align}
\begin{split}
    \*x &= \*Z^{-1}\bar{\*v},\\
    \*A &= \*Z^{-1}\*B,\\
    \*y &= \*Z^{-1}\*w.
    \end{split}
\end{align}
Note that this directly implies that the observations in the basis are distributed as $\*y_i \sim \!N(\*x_i^\|t, \*R)$, and the anomalies in the basis are distributed like $\left[\*A^\|f_0\right]_{(:, {1\leq i\leq N})} \sim \!N(\*0, \*P_0)$.

Note that long-term dynamics can be written in the form
\begin{align}
    \prod_{j = 0}^i \*L_i = \*Z \left(\prod_{j=0}^i \*M_i\right) \*Z^{-1},
\end{align}
meaning that if we initialize the perfect square root EnKF in the model basis, we only have to look at independent model dynamics. 

The SPEnKF formulas, \eqref{eq:spenkff} and \eqref{eq:spenkfa}, for the mean of the $j$th member of state space in the basis at the $i$th time, become,
\begin{equation}
\begin{split}
  \bar{x}^\|f_{i+1,j} &= m_{i,j} \bar{x}^\|a_{i,j},\\
  \bar{x}^\|a_{i,j} &= \bar{x}^\|f_{i,j} + \hat{k}_i(y_{i,j} - \bar{x}^\|f_{i,j}),
  \end{split}
\end{equation}
and for the variances,
\begin{equation}
\begin{split}
  \*a^\|f_{i+1,j} &= m_{i,j} \*a^\|a_{i,j},\\
  \*a^\|a_{i,j} &= {\left(\hat{p}^\|a_{i,j}\right)}^{\frac{1}{2}}{\left(\hat{p}^\|f_{i,j}\right)}^{-\frac{1}{2}}\*a^\|f_{i,j},\\
  \hat{k}_{i,j} &= \frac{\hat{p}^\|f_{i,j}}{\hat{p}^\|f_{i,j}+r_j},\\
  \hat{p}^\|f_{i,j} &= \frac{1}{N} (\*a^\|f_{i,j} \cdot \*a^\|f_{i,j}),\\
  \hat{p}^\|a_{i,j} &= (1-\hat{k}_{i,j})\hat{p}^\|f_{i,j}.
\end{split}
\end{equation}
Writing the mean formulas in matrix notation, we get,
\begin{equation}
\begin{split}
  \bar{\*x}^\|f_{i+1,j} &= \*M_i \bar{\*x}^\|a_{i,j},\\
  \bar{\*x}^\|a_{i,j} &= \bar{\*x}^\|f_{i,j} + \hat{\*K}_i(\*y_{i,j} - \bar{\*x}^\|f_{i,j}),
  \end{split}
\end{equation}
and for the covariance,
\begin{equation}
\begin{split}
  \*A^\|f_{i+1} &= \*M_i \*A^\|a_{i},\\
  \*A^\|a_{i} &= {\left(\hat{\*P}^\|a_{i}\right)}^{\frac{1}{2}}{\left(\hat{\*P}^\|f_{i}\right)}^{-\frac{1}{2}}\*A^\|f_{i},\\
  \hat{\*K}_{i} &= \hat{\*P}^\|f_{i}{(\hat{\*P}^\|f_{i}+\*R)}^{-1},\\
  \hat{\*P}^\|f_{i} &= \*I \circ \frac{1}{N} (\*A^\|f_{i} \*A^{\|f,\intercal}_{i}),\\
  \hat{\*P}^\|a_{i} &= (\*I-\hat{\*K}_i)\hat{\*P}^\|f_{i}.
\end{split}
\end{equation}
Note that these are almost identical to the ESRF formulas, \eqref{eq:meansqrttransport} and \eqref{eq:anomalysqrttransport}. The only difference comes in the covariance tapering, in this case commonly referred to as Schur-product localization in DA literature.

\section{Conclusions}
\label{sec:conclusions}

We introduce a toy idealized EnKF variant named the SPEnKF, for Scalar Pedagogical EnKF, about which we prove several results.
We show the trivial result that in the limit of ensemble size, the SPEnKF degenerates to that of the scalar Kalman filter.
We show that in the step limit, and with a finite ensemble, the SPEnKF converges to that of the scalar Kalman filter, weakly always, and strongly for ``useful'' problems.

We derive optimal sequential step-wise variance inflation and mean correction factors such that the expected values of the SPEnKF outputs converge exactly to that of the scalar Kalman filter in finite time and with a finite ensemble.
We thus provide an alternative explanation for the need for inflation in ensemble-based methods: it is the required in order for the EnKF estimates to be useful in the realistic finite step finite ensemble case.

We then apply this framework to a scalar imaginary perturbed observations Kalman filter and show that in the case of a finite ensemble, we introduce an additional variance proportional to the square of the observation error variance compared to that of the vanilla SPEnKF.

Future work would try to naturally generalize these results to the multivariate case. We believe that it is possible to show that methods such as Schur-product localization are also required for similar reasons. Moreover, there is evidence~\cite{bickel2008regularized} to suggest that this might be doable in the undersampled case as well.

\begin{acknowledgments}
  This work was supported by awards AFOSR DDDAS FA9550--17--1--0015, AFOSR DDDAS 15RT1037, NSF CCF--1613905, NSF ACI--17097276, and by the Computational Science Laboratory at Virginia Tech.
\end{acknowledgments}

\section*{References}

\bibliography{biblio}

\newpage

\begin{appendices}

\section{Useful probability results}

We will now go through the probabilistic preliminaries that we require in order to tackle the finite-ensemble and finite-time convergence of the SPEnKF.

\begin{lemma}\label{lem:distgen}
  If $f_X(x)$ is the probability density function of a random variable, $X$, supported on $(0,\infty)$, then the probability density function of $Y=\frac{a X + b}{c X + d} = g(X)$, where $a,b,c,d\in\mathbb{R}$, $ad-bc \not= 0$, and with the simplifying assumptions that $c,d>0$, is
  \begin{equation}\begin{split}
    f_Y(y) = \frac{\lvert b c-a d \rvert f_X\left(\frac{d y-b}{a-c y}\right)}{{(a-c y)}^2}I_{(\ell_1,\ell_2)},
  \end{split}\end{equation}
  where $\ell_1=\min\{\frac{a}{c},\frac{b}{d}\}$, and $\ell_2=\max\{\frac{a}{c},\frac{b}{d}\}$.
\end{lemma}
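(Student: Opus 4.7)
The plan is a direct application of the univariate change-of-variables formula to $Y = g(X)$ with $g(x) = (ax+b)/(cx+d)$. First I would check that $g$ restricted to $(0,\infty)$ is a genuine diffeomorphism onto its image. Because $c,d > 0$, the denominator $cx+d$ is strictly positive on $(0,\infty)$, so $g$ is smooth there. A one-line quotient-rule computation gives $g'(x) = (ad-bc)/(cx+d)^2$, whose sign is constant and nonzero thanks to the hypothesis $ad-bc\neq 0$; hence $g$ is strictly monotonic. The image is therefore the open interval between the endpoint values $g(0^+) = b/d$ and $\lim_{x\to\infty} g(x) = a/c$, which is exactly $(\ell_1,\ell_2)$. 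This justifies the indicator $I_{(\ell_1,\ell_2)}$ in the stated formula.

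Next I would invert $g$ explicitly. Solving $y(cx+d) = ax+b$ for $x$ yields $g^{-1}(y) = (dy-b)/(a-cy)$, well defined on $y \neq a/c$ and in particular on the image $(\ell_1,\ell_2)$. The derivative of the inverse is then computed via $(g^{-1})'(y) = 1/g'(g^{-1}(y))$. The step that makes everything collapse cleanly is the algebraic simplification
\[
c\,g^{-1}(y) + d \;=\; \frac{c(dy-b) + d(a-cy)}{a-cy} \;=\; \frac{ad-bc}{a-cy},
\]
from which $g'(g^{-1}(y)) = (a-cy)^2/(ad-bc)$ and therefore
\[
\bigl\lvert (g^{-1})'(y)\bigr\rvert \;=\; \frac{|ad-bc|}{(a-cy)^2} \;=\; \frac{|bc-ad|}{(a-cy)^2}.
\]

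Substituting into the standard change-of-variables identity $f_Y(y) = f_X(g^{-1}(y))\,|(g^{-1})'(y)|$ on the image, and setting $f_Y$ to zero elsewhere, yields exactly the claimed density. There is essentially no obstacle: the only place where care is needed is the algebraic simplification of $c\,g^{-1}(y)+d$, which is what produces the $|bc-ad|$ factor in the numerator and makes the formula match as stated.
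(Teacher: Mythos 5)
Your proposal is correct and follows essentially the same route as the paper's proof: a direct change of variables using the explicit inverse $g^{-1}(y)=(dy-b)/(a-cy)$, with the support $(\ell_1,\ell_2)$ obtained from the limits of $g$ at $0^+$ and $\infty$. The only difference is that you verify monotonicity of $g$ explicitly via $g'(x)=(ad-bc)/(cx+d)^2$, which the paper leaves implicit, so your write-up is if anything slightly more careful.
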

\begin{proof}
  Observe that $X = \frac{d Y - b}{a - c Y} = g^{-1}(Y)$, additionally note the following known properties of probability distributions:
  \begin{equation*}\begin{split}
    f_Y(y) &= \left\lvert\frac{d}{d y} g^{-1}(y)\right\rvert f_X(g^{-1}(y))I_{g((0,\infty))}.
  \end{split}\end{equation*}
  We then manipulate:
  \begin{equation*}\begin{split}
    \left\lvert\frac{d}{d y} g^{-1}(y)\right\rvert &= \frac{\lvert b c - a d\rvert}{{(a - c y)}^2},\\
    f_Y(y) &= \frac{\lvert b c-a d \rvert f_X\left(\frac{d y-b}{a-c y}\right)}{{(a-c y)}^2}I_{g((0,\infty))}.
  \end{split}\end{equation*}
  We then only have to provide $g((0,\infty))$:
  \begin{equation*}\begin{split}
    \lim_{x\to0^+} \frac{a x + b}{c x + d} &= \frac{b}{d},\\
    \lim_{x\to\infty} \frac{a x + b}{c x + d} &= \frac{a}{c}.
  \end{split}\end{equation*}
  We do not know which of these values is greater (or even positive and negative), but we can say that $Y$ is therefore supported on the interval between them, as required.
\end{proof}

\begin{corollary}\label{cor:distgamma1}
  If $X\sim\Gamma\left(\alpha,\frac{\alpha}{p}\right)$ and $Y=\frac{a X + b}{c X + d}$ with $c,d>0$ and $a\not=0$ then
  \begin{equation}\begin{split}
    f_Y(y) = \frac{\lvert bc - ad\rvert {\left(\frac{\alpha (d y - b)}{p(a - c y)}\right)}^{\alpha}e^{-\frac{\alpha (d y - b)}{p(a - c y)}}}{{(d y - b)}{(a - c y)}\Gamma(\alpha)}.\label{eq:distgamma1}
  \end{split}\end{equation}
\end{corollary}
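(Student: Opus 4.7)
The plan is to apply Lemma~\ref{lem:distgen} directly, using the explicit form of the Gamma density. Since $X \sim \Gamma(\alpha, \alpha/p)$ is supported on $(0,\infty)$, its density is
$$f_X(x) = \frac{(\alpha/p)^{\alpha}}{\Gamma(\alpha)}\, x^{\alpha-1} e^{-\alpha x / p},\quad x>0,$$
so all hypotheses of Lemma~\ref{lem:distgen} are met: $c,d>0$ and $a\not=0$ are given, and the regularity condition $ad-bc\not=0$ is implicitly required (otherwise $g$ is a constant map and the Jacobian factor in the lemma vanishes).

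First I would substitute $x = (dy-b)/(a-cy)$ into $f_X$ to produce $f_X(g^{-1}(y))$, and then multiply by the Jacobian factor $|bc-ad|/(a-cy)^2$ supplied by the lemma. The remainder is algebraic bookkeeping of exponents: separate one factor of $(dy-b)/(a-cy)$ from the $x^{\alpha-1}$ term, absorb $(\alpha/p)^{\alpha}$ into the $\bigl((dy-b)/(a-cy)\bigr)^{\alpha-1}$ piece to form $\bigl(\alpha(dy-b)/(p(a-cy))\bigr)^{\alpha}$, and combine $(a-cy)^{-2}$ with $(a-cy)^{-(\alpha-1)}$ to obtain a single factor $(a-cy)^{-(\alpha+1)}$. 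After this rearrangement the numerator carries $|bc-ad|\,\bigl(\alpha(dy-b)/(p(a-cy))\bigr)^{\alpha}e^{-\alpha(dy-b)/(p(a-cy))}$ and the denominator carries $\Gamma(\alpha)(dy-b)(a-cy)$, which is exactly~\eqref{eq:distgamma1}.

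There is essentially no obstacle: the argument is one substitution followed by a cancellation. The only subtlety worth flagging is the support. Lemma~\ref{lem:distgen} delivers $f_Y$ on the interval $(\ell_1,\ell_2)$ determined by $g((0,\infty))$, and this indicator is suppressed in the statement of~\eqref{eq:distgamma1}; however the formula is only a valid (positive, real) density where $(dy-b)/(a-cy)>0$, so the support restriction is implicit in the expression itself and need not be written explicitly.
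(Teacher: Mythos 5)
Your proposal is correct and takes essentially the same route as the paper's proof: substitute the Gamma density into the change-of-variables formula of Lemma~\ref{lem:distgen} and rearrange the powers of $(dy-b)$ and $(a-cy)$ to reach~\eqref{eq:distgamma1}. Your observation that the support indicator from Lemma~\ref{lem:distgen} is suppressed in the stated formula is accurate; the paper leaves it implicit as well.
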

\begin{proof}
  Note that the pdf of $X$ is
  \begin{equation*}\begin{split}
    f_X(x) = \frac{{\left(\frac{\alpha}{p}\right)}^\alpha}{\Gamma(\alpha)}x^{\alpha-1}e^{-\frac{\alpha}{p} x},
  \end{split}\end{equation*}
  thus
  \begin{equation*}\begin{split}
    f_Y(y) &=\frac{ \lvert b c - a d\rvert {\left(\frac{\alpha}{p}\right)}^\alpha {\left(\frac{d y - b}{a - c y}\right)}^{\alpha-1}e^{-\frac{\alpha}{p} \frac{d y - b}{a - c y}}}{{(a - c y)}^2\Gamma(\alpha)}\\
    &= \frac{\lvert bc - ad\rvert {\left(\frac{\alpha (d y - b)}{p(a - c y)}\right)}^{\alpha}e^{-\frac{\alpha (d y - b)}{p(a - c y)}}}{{(d y - b)}{(a - c y)}\Gamma(\alpha)},
  \end{split}\end{equation*}
  as required.
\end{proof}

If $E_n(z) = \int_1^\infty \frac{e^{- z t}}{t^n}\diff{t}$ is the generalized exponential integral function, then 
\begin{equation}\begin{split}
  E_n(z) = \frac{z^{n-1}e^{-z}}{\Gamma(n)}\int_0^\infty \frac{t^{n-1}e^{-zt}}{t+1}\diff{t},\\
  n E_{n+1}(z) + z E_n(z) = e^{-z},\\
\frac{1}{z + n} < e^{z}E_n(z) \leq \frac{1}{z + n - 1}.
\end{split}\end{equation}
Additionally, as $n\to\infty$,
\begin{equation}\begin{split}
  E_n(\lambda n) \sim \frac{e^{-\lambda n}}{(\lambda + 1)n} \sum_{j=0}^\infty \frac{A_j(\lambda)}{{(\lambda+1)}^{2j}}\frac{1}{n^j},\label{eq:assEi}
\end{split}\end{equation}
where $A_0(\lambda) = A_1(\lambda) = 1$, and
\begin{equation*}\begin{split}
  A_{j+1}(\lambda) = (1-2\lambda j)A_j(\lambda)+\lambda(\lambda + 1)\frac{\diff{A_j(\lambda)}}{\diff{\lambda}},
\end{split}\end{equation*}
All these come from the very helpful~\cite{olver2010nist}.

\begin{lemma}\label{lem:expandvar}
 If $X\sim \Gamma\left(\alpha,\frac{\alpha}{p}\right)$ and $Y=\frac{a X + b}{c X + d}$, with $c,d>0$, and $a\not=0$ then 
 \begin{equation}\begin{split}
     \mathbb{E}[Y] &= \frac{\alpha}{c}e^{\frac{\alpha d}{c p}}\left[\frac{b}{p}E_\alpha\left(\frac{\alpha d}{c p}\right) + a E_{\alpha+1}\left(\frac{\alpha d}{c p}\right)\right],\\
     \mathbb{E}\left[{(Y-\mathbb{E}[Y])}^2\right] &= \begin{aligned}\phantom{+\,}&\frac{\alpha^2 b^2 e^{\frac{\alpha d}{c p}}}{c^2 p^2}E_{\alpha - 1}\left(\frac{\alpha d}{c p}\right)
                      +\frac{\alpha^2 b(2 a p - b)e^{\frac{\alpha d}{c p}}}{c^2 p^2}E_\alpha\left(\frac{\alpha d}{c p }\right)\\
                      +\,&\frac{\alpha a(\alpha a p + a p -2\alpha b)e^{\frac{\alpha d}{c p}}}{c^2 p}E_{\alpha + 1}\left(\frac{\alpha d}{c p}\right)
                      -\frac{\alpha(\alpha+1)a^2 e^{\frac{\alpha d}{c p}}}{c^2}E_{\alpha+2}\left(\frac{\alpha d}{c p}\right) - {\mathbb{E}[Y]}^2.\end{aligned}
 \end{split}\end{equation}
\end{lemma}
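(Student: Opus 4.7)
The backbone is the integral representation
\[
\Ei_n(z) = \frac{z^{n-1}e^{-z}}{\Gamma(n)}\int_0^\infty \frac{t^{n-1}e^{-zt}}{t+1}\,\diff{t}
\]
recorded just before the lemma. The plan is to evaluate the raw moments $\mathbb{E}[Y]$ and $\mathbb{E}[Y^2]$ by reducing each to integrals of this form via the single change of variables $t = cx/d$, and then to assemble the variance as $\mathbb{E}[Y^2]-\mathbb{E}[Y]^2$. Throughout set $z := \alpha d/(cp)$ and use $f_X(x)=(\alpha/p)^\alpha x^{\alpha-1}e^{-\alpha x/p}/\Gamma(\alpha)$.

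\textbf{Expected value.} Splitting the linear numerator gives
\[
\mathbb{E}[Y] = a \int_0^\infty \frac{x}{cx+d}f_X(x)\,\diff{x} + b \int_0^\infty \frac{1}{cx+d}f_X(x)\,\diff{x}.
\]
The substitution $t = cx/d$ converts these into multiples of $\int_0^\infty t^{n-1}e^{-zt}/(t+1)\,\diff{t}$ with $n=\alpha+1$ and $n=\alpha$, respectively. Collapsing the prefactors $(\alpha/p)^\alpha(d/c)^{n-1}/z^{n-1}$ and applying the representation yields $\mathbb{E}[Y] = (\alpha a/c)e^{z}\Ei_{\alpha+1}(z) + (\alpha b/(cp))e^{z}\Ei_\alpha(z)$, which is the claimed form.

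\textbf{Second moment.} Expanding $(ax+b)^2$ expresses $\mathbb{E}[Y^2]$ as a combination (with coefficients $b^2$, $2ab$, $a^2$) of $I_k := \int_0^\infty x^k (cx+d)^{-2} f_X(x)\,\diff{x}$ for $k=0,1,2$. The same substitution $t = cx/d$ turns each $I_k$ into a multiple of $J(n) := \int_0^\infty t^{n-1} e^{-zt}/(t+1)^2\,\diff{t}$ with $n = k+\alpha$. One integration by parts, taking $u=t^{n-1}e^{-zt}$ and $\diff v = (t+1)^{-2}\diff t$, kills the boundary (provided $n>1$) and reduces $J(n)$ to two copies of the representation integral, giving
\[
J(n) = \frac{\Gamma(n)e^z}{z^{n-2}}\bigl[\Ei_{n-1}(z)-\Ei_n(z)\bigr].
\]
Feeding $n=\alpha,\alpha+1,\alpha+2$ into $I_0, I_1, I_2$ and collecting, the coefficients of $\Ei_{\alpha-1}, \Ei_\alpha, \Ei_{\alpha+1}, \Ei_{\alpha+2}$ assemble into exactly those stated in the lemma; subtracting $\mathbb{E}[Y]^2$ closes the variance.

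\textbf{Main obstacle.} The individual steps are elementary; the bookkeeping is where errors arise. One must simplify $(\alpha/p)^\alpha(d/c)^{k+\alpha-1}/z^{k+\alpha-2}$ to a common $e^z/c^2$ front factor across the three $I_k$ before the stated coefficients become visible. The integration by parts formally requires $n>1$; for $k=0$ this needs $\alpha>1$, which is the oversampled regime $N>2$ already assumed throughout. Finally, since the three-term recurrence $n\Ei_{n+1}(z) + z\Ei_n(z) = e^{-z}$ yields many equivalent presentations of the variance, if a coefficient does not match on the first pass a single application of the recurrence will reconcile it.
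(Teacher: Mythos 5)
Your proof is correct and follows essentially the same route as the paper's: the same substitution $t = cx/d$, the same integral representation of $E_n$, and the same single integration by parts to reduce $(t+1)^{-2}$ to $(t+1)^{-1}$ --- the paper merely performs the integration by parts on the full $(adt+bc)^2$-weighted integral before expanding the square, whereas you expand first and treat the three pieces $J(\alpha), J(\alpha+1), J(\alpha+2)$ separately. Your closed form $J(n)=\Gamma(n)e^{z}z^{2-n}\left[E_{n-1}(z)-E_{n}(z)\right]$ checks out and reproduces the stated coefficients exactly, and the $\alpha>1$ condition you flag for the vanishing boundary term is one the paper's own integration by parts implicitly requires as well.
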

\begin{proof}
Note that $Y$ is supported on the interval $\left(\min\{\frac{a}{c},\frac{b}{d}\}, \max\{\frac{a}{c},\frac{b}{d}\}\right)$, thus by Corollary~\ref{cor:distgamma1} the pdf of Y is given by~\eqref{eq:distgamma1}. We will make the variable substitution $t = \frac{c}{d} x$,
and observe that 
\begin{equation*}\begin{split}
    x = \frac{d}{c} t,\ \diff{x} = \frac{d}{c} \diff{t},\\
    \lim_{x\to 0} t = 0,\ \lim_{x\to \infty} t = \infty.
\end{split}\end{equation*}
It is therefore the case that
  \begin{equation*}\begin{split}
    \mathbb{E}[Y] &= \int_0^\infty \frac{a x + b}{c x + d} f_X(x)\diff{x}\\
    &= \int_0^\infty \frac{(a x + b) {\left(\frac{\alpha x}{p}\right)}^\alpha e^{-\frac{\alpha x}{p}}}{x (c x + d)\Gamma(\alpha)}\diff{x}\\
    &= \frac{{\left(\frac{\alpha d}{c p}\right)}^\alpha}{\Gamma(\alpha)} \int_0^\infty \frac{(a x + b) {\left(\frac{c x}{d}\right)}^\alpha e^{-\frac{\alpha x}{p}}}{x (c x + d)}\diff{x}\\
    &=\frac{{\left(\frac{\alpha d}{c p}\right)}^\alpha}{\Gamma(\alpha)} \int_0^\infty \frac{\left(\frac{a}{c} t + \frac{b}{d}\right) {t}^{\alpha - 1} e^{-\frac{\alpha d}{c p}t}}{t + 1}\diff{t}\\
    &= \left[\frac{\alpha b {\left(\frac{\alpha d}{c p}\right)}^{\alpha-1}}{c p \Gamma(\alpha)}\int_0^\infty \frac{t^{\alpha-1} e^{-\frac{\alpha d}{c p} t}}{t + 1}\diff{t}\right] + \left[\frac{\alpha a {\left(\frac{\alpha d}{c p}\right)}^\alpha}{c\Gamma(\alpha + 1)} \int_0^\infty \frac{t^\alpha e^{-\frac{\alpha d}{c p} t}}{t + 1}\diff{t}\right]\\
    &= \frac{\alpha}{c}e^{\frac{\alpha d}{c p}}\left[\frac{b}{p}E_\alpha\left(\frac{\alpha d}{c p}\right) + a E_{\alpha+1}\left(\frac{\alpha d}{c p}\right)\right],
  \end{split}\end{equation*}
  as required. As for the variance, note that $\mathbb{E}\left[{(Y-\mathbb{E}[Y])}^2\right] = \mathbb{E}[Y^2]-\mathbb{E}{[Y]}^2$. We thus first manipulate:
  \begin{equation*}\begin{split}
    \mathbb{E}[Y^2] &= \int_0^\infty {(\frac{a x + b}{c x + d})}^2 f_X(x)\diff{x}\\
                  &= \int_0^\infty \frac{{(a x + b)}^2 {\left(\frac{\alpha x}{p}\right)}^\alpha e^{-\frac{\alpha x}{p}}}{x {(c x + d)}^2\Gamma(\alpha)}\diff{x}\\
                  &= \frac{{\left(\frac{\alpha d}{c p} \right)}^\alpha}{\Gamma(\alpha)}\int_0^\infty \frac{{(a d t + b c)}^2 t^{\alpha-1} e^{-\frac{\alpha d}{c p}t}}{c^2 d^2 {(t + 1)}^2}\diff{t}\\
                  &= \frac{{\left(\frac{\alpha d}{c p} \right)}^\alpha}{c^2 d^2 \Gamma(\alpha)}\begin{aligned}\Bigg[
                      &\left.-\frac{{(a d t + b c)}^2 t^{\alpha-1} e^{-\frac{\alpha d}{c p}t}}{t+1}\right\rvert_0^\infty\\
                      &-\int_0^\infty\frac{ (a d t+b c)(a d t (\alpha  d t-(\alpha +1) c p)+b c (c (p-\alpha  p)+\alpha  d t)) t^{\alpha -2}e^{-\frac{\alpha  d }{c p}t} }{c p (t+1)}\diff{t}\Bigg]
                    \end{aligned}\\
                  &= \frac{{\left(\frac{\alpha d}{c p} \right)}^\alpha}{c^3 d^2 p \Gamma(\alpha)}\int_0^\infty \frac{ (a d t+b c)(\alpha  (c p-d t) (a d t+b c)+c p (a d t-b c)) t^{\alpha -2}e^{-\frac{\alpha  d }{c p}t} }{t+1}\diff{t}\\
                  &= \frac{{\left(\frac{\alpha d}{c p} \right)}^\alpha}{c^3 d^2 p \Gamma(\alpha)}\begin{aligned}\Bigg[\phantom{+}\,&\int_0^\infty \frac{\left(\alpha  b^2 c^3 p-b^2 c^3 p\right) t^{\alpha -2}e^{-\frac{\alpha  d }{c p}t}}{t + 1}\diff{t}\\
                    +\,&\int_0^\infty \frac{\left(2 \alpha a b c^2 d p-\alpha  b^2 c^2 d\right)t^{\alpha -1} e^{-\frac{\alpha  d}{c p}t} }{t+1}\diff{t}\\
                    +\,&\int_0^\infty \frac{\left(\alpha a^2 c d^2 p+a^2 c d^2 p - 2\alpha a   b c d^2\right)t^{\alpha } e^{-\frac{\alpha  d}{c p}t}}{t + 1}\diff{t}\\
                    -\,&\int_0^\infty \frac{a^2 \alpha  d^3 t^{\alpha +1} e^{-\frac{\alpha  d }{c p}t}}{t + 1}\diff{t}
                    \Bigg]\end{aligned}\\
                    &= \begin{aligned}\phantom{+}&\frac{\alpha^2 b^2 e^{\frac{\alpha d}{c p}}}{c^2 p^2}E_{\alpha - 1}\left(\frac{\alpha d}{c p}\right)
                      +\frac{\alpha^2 b(2 a p - b)e^{\frac{\alpha d}{c p}}}{c^2 p^2}E_\alpha\left(\frac{\alpha d}{c p }\right)\\
                      +&\frac{\alpha a(\alpha a p + a p -2\alpha b)e^{\frac{\alpha d}{c p}}}{c^2 p}E_{\alpha + 1}\left(\frac{\alpha d}{c p}\right)
                      -\frac{\alpha(\alpha+1)a^2 e^{\frac{\alpha d}{c p}}}{c^2}E_{\alpha+2}\left(\frac{\alpha d}{c p}\right),\end{aligned}
  \end{split}\end{equation*}
  with the rest trivial.
\end{proof}

\begin{corollary}
  If $X\sim\Gamma\left(\alpha,\frac{\alpha}{p}\right)$ and $Y=\frac{a X + b}{c X + d}$ with $c,d>0$ and $a\not=0$, then, without proof,
  \begin{equation}\begin{split}
      \mathbb{E}[Y^4] = \frac{a^4 (\alpha +1) r^2}{6 \alpha  c^7 p^4} 
      \left[\begin{aligned}
      \phantom{-}&p \left(6 c^3 p^3+\alpha  (\alpha  (\alpha +7)+18) c^2 d p^2+\alpha ^2 (2 \alpha +9) c d^2 p+\alpha ^3 d^3\right)\\
      -&\frac{\alpha  d e^{\frac{\alpha  d}{c p}} \left(
      \begin{aligned}
      (\alpha +1) (\alpha +2) (\alpha +3) c^3 p^3 &+3 \alpha  (\alpha
   +2) (\alpha +3) c^2 d p^2\\&+3 \alpha ^2 (\alpha +3) c d^2 p+\alpha ^3 d^3
   \end{aligned}\right ) E_{\alpha }\left(\frac{d \alpha }{c p}\right)}{c}\end{aligned}\right]
  \end{split}.\end{equation}
\end{corollary}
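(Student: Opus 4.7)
The plan is to extend the calculation of $\mathbb{E}[Y^2]$ in Lemma~\ref{lem:expandvar} to the fourth moment using the same substitution and reduction scheme, then collapse the resulting family of $E_n$'s into a single $E_\alpha$.

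First, I would start from
\[
\mathbb{E}[Y^4] = \int_0^\infty \frac{(ax+b)^4}{(cx+d)^4}\, f_X(x)\, dx, \qquad f_X(x) = \frac{(\alpha/p)^\alpha}{\Gamma(\alpha)}\, x^{\alpha-1} e^{-\alpha x/p},
\]
and apply the substitution $t = (c/d)x$ used in Lemma~\ref{lem:expandvar}. This rewrites the integrand, up to an overall prefactor, as $t^{\alpha-1}(adt/c + b)^4 e^{-\lambda t}/[d^4(t+1)^4]$ with $\lambda = \alpha d/(cp)$. Expanding $(adt/c + b)^4$ by the binomial theorem reduces the task to evaluating $\int_0^\infty t^{\alpha - 1 + k} e^{-\lambda t}/(t+1)^4\, dt$ for $k = 0, 1, 2, 3, 4$.

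Next, I would lower the power of $(t+1)$ in the denominator from four to one by applying integration by parts three successive times, with $dv = (t+1)^{-m}\, dt$ each time, exactly as was done once for $\mathbb{E}[Y^2]$ in Lemma~\ref{lem:expandvar}. The surface terms vanish at both endpoints because of the factor $t^{\alpha-1+k}$ at $t = 0$ and exponential decay at $t = \infty$. Each remaining integral takes the form $\int_0^\infty t^\beta e^{-\lambda t}/(t+1)\, dt$, which is converted into $E_{\beta+1}(\lambda)$ via the identity $E_n(z) = z^{n-1} e^{-z} \Gamma(n)^{-1}\int_0^\infty t^{n-1} e^{-zt}/(t+1)\, dt$ recorded between Corollary~\ref{cor:distgamma1} and Lemma~\ref{lem:expandvar}.

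Finally, the output of the previous step is a linear combination of $E_{\alpha-1}, E_\alpha, E_{\alpha+1}, E_{\alpha+2}, E_{\alpha+3}$ evaluated at $\lambda$. I would iterate the recurrence $n E_{n+1}(z) + z E_n(z) = e^{-z}$ to eliminate every $E_{\alpha+k}$ with $k \neq 0$ in favor of $E_\alpha(\lambda)$ plus a polynomial in $\lambda$; this is exactly what produces the structure of the stated formula, where only $E_\alpha$ appears alongside an algebraic polynomial correction. The main obstacle is purely bookkeeping: after the degree-four binomial expansion and three integrations by parts, the intermediate expression is a linear combination of many monomials in $a, b, c, d, p, \alpha$, and collapsing this into the compact form above requires careful algebraic simplification that is most safely verified by a computer algebra system. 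This is presumably why the corollary is stated without proof.
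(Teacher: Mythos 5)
The paper states this corollary ``without proof,'' so the only internal benchmark is the second-moment computation in Lemma~\ref{lem:expandvar}, and your plan is precisely the natural extension of that computation: the substitution $t=(c/d)x$, reduction of the $(t+1)$ power in the denominator, conversion to generalized exponential integrals via the identity quoted before Lemma~\ref{lem:expandvar}, and collapse of the family $E_{\alpha-1},\dots,E_{\alpha+3}$ to a single $E_\alpha$ by iterating $nE_{n+1}(z)+zE_n(z)=e^{-z}$. That last step is sound in both directions (the recurrence eliminates $E_{\alpha+k}$ downward and $E_{\alpha-k}$ upward), and the overall strategy is the right one.

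Two concrete caveats. First, your justification of the vanishing surface terms is only valid for the \emph{first} integration by parts: once you differentiate $u$ you pick up terms proportional to $t^{\alpha-2+k}$, so for the $k=0$ contribution the second and third boundary terms, and the intermediate integrals (which behave like $t^{\alpha-4}$ near the origin), only make sense for $\alpha>3$. You should either carry out the computation for $\alpha>3$ and extend by analyticity of both sides in $\alpha$, or replace the repeated integration by parts with a decomposition that keeps every intermediate integral convergent, e.g.\ writing $(t+1)^{-m}$ as an $(m-1)$-st derivative of $(t+s)^{-1}$ at $s=1$ and using $\int_0^\infty t^{\alpha-1}e^{-\lambda t}(t+s)^{-1}\,\diff{t}=\Gamma(\alpha)s^{\alpha-1}e^{\lambda s}E_\alpha(\lambda s)$ together with $E_\nu'=-E_{\nu-1}$. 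Second --- and this is a defect of the target rather than of your method --- the displayed formula contains an $r$ that appears nowhere in the hypotheses and contains no $b$ at all; it is evidently the specialization $b=0$ with $d$ renamed $r$, which is the case actually used for $\mathbb{E}[K_i^4]$ in subsection~\ref{subsec:po}. Your general-$b$ computation is the right thing to attempt, but it will not literally reproduce the displayed expression; to match it you must set $b=0$ (so only the $k=4$ term of your binomial expansion survives) and read $r$ as $d$.
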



\begin{corollary}\label{cor:expectedconverge}
  The asymptotic behavior of the expected value  is
  \begin{equation}\begin{split}
    \lim_{\alpha\to\infty} \mathbb{E}[Y] = \frac{a p + b}{c p + d},
  \end{split}\end{equation}
  and converges sublinearly in $\alpha$.
\end{corollary}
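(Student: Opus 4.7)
The plan is to start from the closed-form expression furnished by Lemma~\ref{lem:expandvar},
\begin{equation*}
\mathbb{E}[Y] \;=\; \frac{\alpha}{c}\,e^{z}\!\left[\frac{b}{p}\,E_{\alpha}(z) \,+\, a\,E_{\alpha+1}(z)\right], \qquad z \,:=\, \frac{\alpha d}{cp},
\end{equation*}
and collapse the two generalized exponential integrals into a single one using the recurrence $n\,E_{n+1}(z) + z\,E_{n}(z) = e^{-z}$ recalled just before the lemma. Taking $n=\alpha$ gives $a\,E_{\alpha+1}(z) = (a/\alpha)\bigl(e^{-z} - z\,E_{\alpha}(z)\bigr)$, and after substitution and regrouping the $e^{\pm z}$ factors collapse cleanly, leaving the compact form
\begin{equation*}
\mathbb{E}[Y] \;=\; \frac{a}{c} \,+\, \frac{\alpha\,(bc-ad)}{c^{2}\,p}\,e^{z}\,E_{\alpha}(z).
\end{equation*}
This reformulation is the key step: it isolates the only $\alpha$-dependent object, namely $\alpha\,e^{z}E_{\alpha}(z)$.

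Next I would analyze that quantity with the elementary two-sided bound recorded before the lemma, $\dfrac{1}{z+\alpha} < e^{z}E_{\alpha}(z) \leq \dfrac{1}{z+\alpha-1}$. Since $z+\alpha = \alpha(cp+d)/(cp)$, multiplying through by $\alpha$ gives
\begin{equation*}
\frac{cp}{cp+d} \;<\; \alpha\,e^{z}E_{\alpha}(z) \;\leq\; \frac{\alpha}{\alpha(cp+d)/(cp)-1},
\end{equation*}
and the right-hand side manifestly tends to $cp/(cp+d)$ as $\alpha\to\infty$. A squeeze then yields $\lim_{\alpha\to\infty}\alpha\,e^{z}E_{\alpha}(z) = cp/(cp+d)$, and routine algebra reduces $\dfrac{a}{c} + \dfrac{(bc-ad)}{c^{2}p}\cdot\dfrac{cp}{cp+d}$ to the stated value $\dfrac{ap+b}{cp+d}$.

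For the sublinear-rate claim, the same two-sided bound supplies it for free: the gap between the upper and lower bounds on $e^{z}E_{\alpha}(z)$ is $\dfrac{1}{(z+\alpha-1)(z+\alpha)} = O(\alpha^{-2})$, so $\alpha\,e^{z}E_{\alpha}(z) = cp/(cp+d) + O(\alpha^{-1})$, and consequently $\mathbb{E}[Y] - (ap+b)/(cp+d) = O(\alpha^{-1})$. This is algebraic, not geometric, decay, which is the ``sublinear'' behavior stated in the corollary.

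I do not anticipate a genuine obstacle: the proof is essentially an algebraic identity followed by a squeeze. The one spot requiring care is the $E_{\alpha+1}\leftrightarrow E_{\alpha}$ reduction, where the constant term $a/c$ must emerge from $a\,e^{z}\cdot e^{-z}/\alpha \cdot \alpha/c$; getting the arithmetic right there is what allows the heavier asymptotic expansion~\eqref{eq:assEi} to be bypassed entirely in favor of the elementary bound.
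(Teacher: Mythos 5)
Your reduction of $\mathbb{E}[Y]$ to $\tfrac{a}{c} + \tfrac{\alpha(bc-ad)}{c^2p}\,e^{z}E_\alpha(z)$ via the recurrence $\alpha E_{\alpha+1}(z)+zE_\alpha(z)=e^{-z}$ is exactly the paper's first step, and your evaluation of the limit is correct but follows a genuinely more elementary route: the paper invokes the asymptotic expansion~\eqref{eq:assEi} to get $\alpha e^z E_\alpha(z)\to cp/(cp+d)$, whereas you squeeze with the elementary bound $\tfrac{1}{z+\alpha}<e^zE_\alpha(z)\le\tfrac{1}{z+\alpha-1}$. For the limit alone this is a clean simplification that avoids the heavier machinery.

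However, there is a genuine gap in your argument for the \emph{sublinear} rate. The two-sided elementary bound gives you $0 < \alpha e^z E_\alpha(z) - \tfrac{cp}{cp+d} \le O(\alpha^{-1})$, i.e.\ only an \emph{upper} bound on the error. An upper bound of order $\alpha^{-1}$ shows the convergence is \emph{at least} that fast; it does not preclude the error from decaying geometrically or faster, which would be linear or superlinear convergence rather than sublinear. The paper uses the standard ratio criterion, showing $\lim_{\alpha\to\infty}\bigl(e_{\alpha+1}/e_\alpha\bigr)=\lim_{\alpha\to\infty}\tfrac{\alpha}{\alpha+1}=1$ where $e_\alpha = \alpha e^zE_\alpha(z)-\tfrac{cp}{cp+d}$, and for that one needs the error to be asymptotically \emph{equal} to $\tfrac{c^3p^3}{\alpha(cp+d)^3}$ --- a matching lower bound of order $\alpha^{-1}$ --- which the elementary inequality cannot supply (its lower bound on the error is only strict positivity). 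To close the gap you must bring in the expansion~\eqref{eq:assEi}, or some other estimate giving the leading term of $e^zE_\alpha(z)-\tfrac{1}{z+\alpha}$, at which point you are essentially back to the paper's computation.
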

\begin{proof}
  First observe that
  \begin{equation*}\begin{split}
    \mathbb{E}[Y] &= \frac{\alpha}{c}e^{\frac{\alpha d}{c p}}\left[\frac{b}{p}E_\alpha\left(\frac{\alpha d}{c p}\right) + a E_{\alpha+1}\left(\frac{\alpha d}{c p}\right)\right]\\
                  &= \frac{a}{c} + \frac{\alpha(bc - ad)}{c^2 p} e^{\frac{\alpha d}{c p}} E_\alpha\left(\frac{\alpha d}{c p}\right)
  \end{split}.\end{equation*}
  By~\eqref{eq:assEi}, observe that
  \begin{equation*}\begin{split}
    \lim_{\alpha\to\infty} \mathbb{E}[Y] &= \frac{a}{c} + \lim_{\alpha\to\infty} \frac{\alpha(b c - a d)}{c^2 p} \left[\frac{1}{\alpha \left(\frac{d}{c p} + 1\right)} + \!O\left(\frac{1}{\alpha^2}\right) \right]\\
    &= \frac{a}{c} + \frac{b c - a d}{c(c p + d)}\\
    &= \frac{a p + b}{c p + d},
  \end{split}\end{equation*}
  as required. 
  
  For the convergence rate it suffices to show that $\alpha e^{\frac{\alpha d}{c p}} E_\alpha\left(\frac{\alpha d}{c p}\right)$ converges sublinearly to $\frac{cp}{cp + d}$ in $\alpha$:
  \begin{equation*}\begin{split}
    \lim_{\alpha\to\infty} \frac{(\alpha + 1) e^{\frac{(\alpha + 1) d}{c p}} E_{\alpha + 1}\left(\frac{(\alpha + 1) d}{c p}\right) - \frac{cp}{cp + d}}{\alpha e^{\frac{\alpha d}{c p}} E_\alpha\left(\frac{\alpha d}{c p}\right) - \frac{cp}{cp + d}}
    &= \lim_{\alpha\to\infty} \frac{\frac{cp}{cp + d} - \frac{cp}{cp + d} + \frac{c^3 p^3}{(\alpha + 1) {(c p + d)}^3} + \!O(\frac{1}{\alpha^2})}{\frac{cp}{cp + d} - \frac{cp}{cp + d} + \frac{c^3 p^3}{\alpha {(c p + d)}^3} + \!O(\frac{1}{\alpha^2})}\\
    &= \lim_{\alpha\to\infty} \frac{\alpha}{\alpha + 1} = 1,
  \end{split}\end{equation*}
  which is sublinear convergence.
\end{proof}

\begin{corollary}\label{cor:varianceconverge}
  The asymptotic behavior of the variance is
  \begin{equation}\begin{split}
    \lim_{\alpha\to\infty} \mathbb{E}\left[{\left(Y-\mathbb{E}[Y]\right)}^2\right] = 0
  \end{split}\end{equation}
\end{corollary}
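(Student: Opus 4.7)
The plan is to bypass the direct manipulation of the closed-form expression for the variance from Lemma \ref{lem:expandvar} and instead argue probabilistically, exploiting the concentration of the underlying gamma distribution together with boundedness of $Y$. First I would record two elementary facts about $X \sim \Gamma(\alpha, \alpha/p)$: its mean is $p$ and its variance is $p^2/\alpha$. Chebyshev's inequality then yields $X \to p$ in probability as $\alpha \to \infty$. Since $c, d > 0$, the map $g(x) = (ax+b)/(cx+d)$ is continuous on $(0,\infty)$ (the denominator never vanishes there), so by the continuous mapping theorem $Y = g(X) \to g(p) = (ap+b)/(cp+d)$ in probability.

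The second ingredient is uniform boundedness of $Y$. Lemma \ref{lem:distgen} shows that $Y$ is supported on the interval between $a/c$ and $b/d$, with endpoints independent of $\alpha$. Hence there exists a constant $C$ (depending only on $a,b,c,d$) such that $|Y| \leq C$ almost surely for every $\alpha$. In particular the family $\{Y^2\}_\alpha$ is uniformly bounded and therefore uniformly integrable. Combining uniform integrability with convergence in probability gives $\mathbb{E}[Y] \to g(p)$ and $\mathbb{E}[Y^2] \to g(p)^2$, from which
\begin{equation*}
\lim_{\alpha\to\infty} \mathbb{E}\left[\left(Y - \mathbb{E}[Y]\right)^2\right] = \lim_{\alpha\to\infty} \mathbb{E}[Y^2] - \left(\lim_{\alpha\to\infty}\mathbb{E}[Y]\right)^2 = g(p)^2 - g(p)^2 = 0
\end{equation*}
follows immediately. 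Note the convergence $\mathbb{E}[Y] \to g(p)$ has already been established in Corollary \ref{cor:expectedconverge}, so only the $Y^2$ piece requires the bounded-convergence argument.

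As a consistency check aligned with the appendix's calculational style, one can instead start from the explicit four-term formula for $\mathbb{E}[Y^2]$ in Lemma \ref{lem:expandvar} and apply the asymptotic expansion \eqref{eq:assEi}, namely $E_n(\lambda n) \sim e^{-\lambda n}/((\lambda+1)n)$ with $\lambda = d/(cp)$, to each of $E_{\alpha-1}, E_\alpha, E_{\alpha+1}, E_{\alpha+2}$. The leading $O(1)$ contributions should assemble to $g(p)^2$, cancelling the square of the limit from Corollary \ref{cor:expectedconverge} and leaving an $O(1/\alpha)$ remainder; this also pins down the convergence rate as sublinear in $\alpha$. The main obstacle on this route is purely bookkeeping: one must retain enough subleading terms in \eqref{eq:assEi} for each of the four $E_n$ factors so that the constant-order pieces cancel cleanly, whereas the probabilistic route sidesteps the calculation entirely by using only the mean and variance of the gamma distribution plus the support bound of Lemma \ref{lem:distgen}.
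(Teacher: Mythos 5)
Your proof is correct, but it takes a genuinely different route from the paper's. The paper works directly from the closed-form expression for $\mathbb{E}[Y^2]$ in Lemma~\ref{lem:expandvar}, uses the recurrence $nE_{n+1}(z)+zE_n(z)=e^{-z}$ to collapse the four exponential-integral terms into a single $E_\alpha$ factor, and then applies the asymptotic expansion \eqref{eq:assEi} to show $\lim_{\alpha\to\infty}\mathbb{E}[Y^2]={\left(\frac{ap+b}{cp+d}\right)}^2$, matching the square of the limit from Corollary~\ref{cor:expectedconverge}. Your argument instead uses only that $X\sim\Gamma(\alpha,\alpha/p)$ has mean $p$ and variance $p^2/\alpha$ (so $X\to p$ in probability by Chebyshev), that $g(x)=(ax+b)/(cx+d)$ is continuous on $(0,\infty)$ when $c,d>0$ (so $Y\to g(p)$ in probability), and that $Y$ is uniformly bounded by $\max\{\lvert a/c\rvert,\lvert b/d\rvert\}$ independently of $\alpha$ (so bounded convergence upgrades convergence in probability to convergence of the first two moments). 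This is cleaner and entirely sidesteps the exponential-integral bookkeeping; what it gives up is any quantitative information, whereas the paper's computational route (which your ``consistency check'' paragraph essentially reproduces) exposes the $O(1/\alpha)$ remainder and hence a convergence rate. Since the corollary as stated asserts only that the limit is zero, your soft argument fully suffices.
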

\begin{proof}
  Note that
  \begin{equation*}\begin{split}
    \mathbb{E}[Y^2] &= \begin{aligned}\frac{\alpha b^2}{c d p} - \frac{\alpha(\alpha-1)b^2 e^{\frac{\alpha  d}{c p}}}{c d p}E_\alpha\left(\frac{\alpha d}{c p}\right) + \frac{\alpha^2 b (2 a p - b) e^{\frac{\alpha  d}{c p}}}{c^2 p^2}E_\alpha\left(\frac{\alpha d}{c p}\right)\\
      +\frac{a(\alpha a p + a p - 2\alpha b)}{c^2 p} - \frac{\alpha a d(\alpha a p + a p - 2\alpha b) e^{\frac{\alpha  d}{c p}}}{c^3 p^2} E_\alpha\left(\frac{\alpha d}{c p}\right)\\
      -\frac{\alpha a^2}{c^2} + \frac{\alpha a^2 d}{c^3 p} - \frac{\alpha a^2 d^2 e^{\frac{\alpha  d}{c p}}}{c^4 p^2}E_\alpha\left(\frac{\alpha d}{c p}\right)
    \end{aligned}\\
    &= \frac{c p \left(a^2 c d p + \alpha  {(b c - a d)}^2\right)-\alpha  (b c-a d) e^{\frac{\alpha  d}{c p}} E_{\alpha }\left(\frac{\alpha d}{c p}\right) (\alpha  (c p+d) (b c-a d)-c p (a d+b c))}{c^4 d p^2},
  \end{split}\end{equation*}
  then we see that
  \begin{equation*}\begin{split}
    \lim_{\alpha\to\infty}\mathbb{E}[Y^2] &= \lim_{\alpha\to\infty}\frac{p^2 (b c-a d) (a d+b c)+\alpha  d {(a p + b)}^2 (c p+d)}{\alpha  d {(c p+d)}^3} + \!O\left(\frac{1}{\alpha}\right)\\
                                          &= {\left(\frac{a p + b}{c p + d}\right)}^2,
  \end{split}\end{equation*}
  Meaning that
  \begin{equation*}\begin{split}
    \lim_{\alpha\to\infty}\mathbb{E}\left[{(Y-\mathbb{E}[Y])}^2\right] &= \lim_{\alpha\to\infty}\mathbb{E}[Y^2] - \lim_{\alpha\to\infty}\mathbb{E}{[Y]}^2 \\
    &= 0,
  \end{split}\end{equation*}
  as required.
\end{proof}


\end{appendices}

\end{document}